\def\be{\begin{equation}}
\def\ee{\end{equation}}
\def\bea{\begin{align}}
\def\eea{\end{align}}
\def\bea*{\begin{align*}}
\def\eea*{\end{align*}}
\theoremstyle{plain}
\newtheorem{proposition}{Proposition}
\theoremstyle{definition} 
\newtheorem{definition}{Definition}
\newtheorem{example}{Example}
\newtheorem{remark}{Remark}
\DeclareMathOperator{\E}{\mathbb{E}}
\DeclareMathOperator{\N}{\mathbb{N}}
\DeclareMathOperator{\Pb}{\mathbb{P}}
\DeclareMathOperator{\R}{\mathbb{R}}
\DeclareMathOperator{\calA}{\mathcal{A}}
\DeclareMathOperator{\calC}{\mathcal{C}}
\DeclareMathOperator{\calH}{\mathcal{H}}
\DeclareMathOperator{\calK}{\mathcal{K}}
\DeclareMathOperator{\calL}{\mathcal{L}}
\DeclareMathOperator{\calP}{\mathcal{P}}
\DeclareMathOperator{\calR}{\mathcal{R}}
\title{Deep Level-set Method for  Stefan Problems}
\author{Mykhaylo \ Shkolnikov\footnote{ 
{\tt mykhaylo@princeton.edu}}
\and H. Mete Soner\footnote{
{\tt soner@princeton.edu}}
\and Valentin Tissot-Daguette\footnote{ 
{\tt v.tissot-daguette@princeton.edu}}
\\[0.8em]
Department of Operations Research and Financial
Engineering\\ Princeton University
}
\begin{document}
\maketitle

\vspace{0mm}
\begin{abstract}
We propose a level-set approach to characterize the region occupied by the solid in Stefan problems with and without surface tension, based on their recent probabilistic reformulation. The level-set function is parameterized by a feed-forward neural network, whose parameters are trained using the probabilistic formulation of the Stefan growth condition. The algorithm can handle Stefan problems where the liquid is supercooled and can capture surface tension effects through the simulation of particles along the moving boundary together with an efficient approximation of the mean curvature. We demonstrate the effectiveness of the method on a variety of examples with and without radial symmetry. 
\end{abstract}
\vspace{2mm}
	
\textbf{Keywords:} level-set method, mushy region, neural networks, probabilistic solutions, Stefan problem, supercooling, surface tension   
\\ \vspace{-0mm}

\textbf{Mathematics Subject Classification}:  
35R35, 
68T07, 
65K15 


\section{Introduction}\label{sec:intro}

The Stefan problem \cite{LC,Stefan1,Stefan2,Stefan3,Stefan4} is central to partial differential equations involving free boundaries. It aims to capture the moving interface separating a solid from a liquid region, as well as the evolution of the temperature in both regions. Despite its simple description and many deep results obtained since its introduction (see, e.g.,  \cite{Figalli2018} and the references therein), many intriguing  questions remain open. In particular, weak solutions to the Stefan problem are non-unique in general, while strong solutions may fail to exist. Thus, further restrictions are needed to obtain a unique characterization. A recent approach developed by Delarue, Guo, Nadtochiy and the first author \cite{DNS,MishaScaling,guo2023stefan} provides stochastic representations and proposes the notion of \textit{physical probabilistic} solutions as a selection principle.
This new notion is expected to lie between weak and strong solutions, as shown in \cite{MishaScaling} for the one-phase supercooled Stefan problem.

Probabilistic solutions satisfy a growth condition relating the change in volume of the solid region, denoted by $\Gamma_t$, to the proportion of absorbed ``heat'' particles in the two phases:
\begin{equation}\label{eq:probaCond2}
\underbrace{|\Gamma_{0}| -  |\Gamma_t|}_{\text{Volume change of the solid}}
   = \; \underbrace{\eta\Pb(\tau^1 \le t) - \Pb(  \tau^2 \le t)}_{\text{Absorbed liquid $\&$ solid heat particles}},   
\end{equation}\\[-1em]
where $\tau^1$ (resp.~$\tau^2$) stands for the hitting time of the moving interface $\partial \Gamma = (\partial \Gamma_t)_{t \in [0,T]}$ for particles in the liquid (resp.~solid). 
The binary parameter $\eta$ captures the effect of particles in the liquid when the latter  has a nonnegative temperature ($\eta = 1$) or is \textit{supercooled} ($\eta = -1$). An exact statement is given in \cref{def:probSolNoTens,def:probSolTension}, below.  While probabilistic solutions of the Stefan problem are quite well understood in one space dimension \cite{DNS,CRS} and for  radially symmetric solids \cite{guo2023stefan,MishaRadialTension}, less is known in the general case. Additional challenges arise  when  incorporating surface tension effects through the classical \textit{Gibbs-Thomson law}, \cite{Almgren,MishaRadialTension,Popinet}, which postulates that the temperature at the interface is below (resp. above) the equilibrium melting point where the solid is locally convex (resp.~concave).   

We leverage the \text{probabilistic} solutions of Stefan problems, the celebrated level-set method of Osher and Sethian \cite{OsherSethian}, and the recent advances in training neural networks in order to produce efficient~numerical algorithms. More specifically, we represent $\Gamma = (\Gamma_t)_{t\in [0,T]}$, the evolving region occupied by the solid, by a level-set function which is parameterized by a feed-forward neural network. That is, the evolving solid is given by the zero sublevel set of an appropriate function $\Phi: [0,T] \times \R^d \to \R$, focusing on $d\in \{2,3\}$ in our  numerical experiments. 
The level-set method is widely used to describe the evolution of moving  interfaces in arbitrary dimensions as it imposes no assumptions on the geometry of the unknown region and can easily handle changes in topological properties. For example, level sets are able to capture the separation of a connected set into several components and vice versa. As demonstrated in the numerical examples below (\cref{sec:Dumbbell}), this flexibility is of crucial importance for general Stefan problems.  We refer the reader to the excellent books by \citet{OsherFedkiw} and \citet{Sethian} for a comprehensive description of level-set methods.  

To the best of our knowledge, the use of the level-set method for the Stefan problem was first proposed in \cite{OsherStefan}.  Their algorithm alternately approximates the moving interface through level-set functions and the temperature in the two phases via a finite difference scheme for the heat equation. The method is capable of accurately reproducing known solutions to the Stefan problem, as well as of generating realistic dendritic growth for a variety of solids (see also \cite{Gibou}, focusing on dendritic crystallization, and \cite[Section 23]{OsherFedkiw}, presenting applications to general heat flows). Herein, the level-set function is parameterized by a time-space feedforward neural network $\Phi:[0,T]\times \R^d \times \Theta \to \R$ where $\Theta$ is a finite-dimensional  parameter set.  The parameters are trained using stochastic gradient descent by converting the growth condition of probabilistic solutions 
into a loss function. The computation of the latter involves the simulation of reflected Brownian particles in the two phases. The proposed method has the advantage that the normal vector to the interface -- which is essential in level-set methods -- can be effortlessly computed through automatic differentiation of the deep level-set function. 

The training of neural network parameters is achieved by minimizing a loss function that involves stopped particles. Since the hitting times of sharp interfaces lead to vanishing gradient issues and prevent the training of the deep level-set function, we use a relaxation procedure as in \cite{ReppenSonerTissotDF,ReppenSonerTissotFB,SonerTissot}, developed for optimal stopping. This relaxation consists of  introducing a \textit{mushy region} separating the solid and the liquid as in phase-field models \cite{BSS,B-mushy,SonerMS}, and then using stopping probabilities. The loss function defined in \eqref{eq:loss}, below, tries to enforce the local version \eqref{eq:probaCond} of \eqref{eq:probaCond2} for a large number of randomly chosen test functions. Such a construction based on an identity holding for a class of test functions is novel and may find other applications.

The trained network encapsulates the two phases, and it is important to emphasize that the temperature function does not need to be approximated during training. Due to the probabilistic nature of the algorithm, the temperature function can be estimated later through the empirical measure of the surviving particles. Moreover, surface tension effects can be seamlessly integrated into the algorithm, see \cref{sec:MethodTension}. We refer the interested reader to \cite{Popinet} surveying computational methods for differential equations with surface tension in fluid mechanics.

Combining the level-set method with deep learning to solve free boundary problems appears to be new, although our work shares some similarities with \cite{Angelo}, which provides a numerical resolution of controlled front propagation flows with the level-set method, and where the velocity is approximated by neural networks and not the level-set function. The present approach is motivated by the recent advances of deep learning to solve complex and/or high-dimensional problems in partial differential equations \cite{HanJentzenE,DGM,WangPerdikaris,PhamPDE}, optimal stopping \cite{Becker1,Becker2,ReppenSonerTissotDF,ReppenSonerTissotFB}, as well as general stochastic control problems \cite{PhamNum,HanE,ReppenSonerTissotDF}. In the case of physical phenomena, \textit{physics-informed} neural networks \cite{PerdikarisPI,WangPerdikaris} successfully combine observed data with known physical laws to learn the solution for a variety of problems in physics. 
The neural parameterization of level-set functions has already shown promising results in computer vision. Among others, a neural network is trained to approximate the signed distance function associated with three-dimensional objects in \cite{DeepSDF} and their \emph{occupancy probability} in \cite{OccupancyNN}.  

We believe that the proposed deep level-set method can be applied to problems in a variety of contexts. This includes  free boundary problems in physics, such as  Hele-Shaw and Stokes flows \cite{Howison}.  In mathematical finance, the method can be used to generalize the \emph{neural optimal stopping boundary method} of \cite{ReppenSonerTissotFB} for the exercise boundary of American options with known geometric structure. Also, optimal portfolio problems with transaction costs \cite{MKRS} could benefit from our method once the so-called no-trade zone is represented by a deep level-set function.\\[-1em]  

\textbf{Structure of the paper.} We introduce the Stefan problem and its probabilistic solution in \cref{sec:Problem}. The deep level-set method is described in \cref{sec:Method} and extended in \cref{sec:MethodTension} for the Stefan problem with  surface tension.     \cref{sec:Numerics} is devoted to the numerical results in the radially symmetric case (\cref{sec:RadialNumerics}) and for  general shapes of the solid (\cref{sec:GeneralNumerics}).  \cref{sec:conclusion} concludes, and \cref{app:Proofs} contains the proofs of the main results. \\[-1em]

\textbf{Notations.}  Given a Lebesgue measurable set $A\subset \R^d$, $d\ge 1$, and a measurable function $\psi:\R^d\to \R$, we employ the shorthand notations $\int_A \psi= \int_A \psi(x)\,\mathrm{d}x$ and $\int_{\partial A} \psi =  \int_{\partial A}\psi(x)\,\mathrm{d}\calH^{d-1}(x)$, where $\calH^{d-1}$ is the $(d-1)$-dimensional Hausdorff measure. We also write $|A|$ for the Lebesgue measure of $A$. 

\newpage
\section{Stefan Problems and Probabilistic Solutions}
\label{sec:Problem}
Let $\Omega$ be a bounded domain in $\R^d$, $d\ge 1$, and $T\in (0,\infty)$. Given a closed subset $\Gamma_{0-} \subseteq \overline{\Omega}$ and $u_0^1:  \overline{\Omega\setminus \Gamma_{0-}} \to \R, \  u_{0}^2: \Gamma_{0-} \to \R_{-}$, the  strong formulation of the  \textit{two-phase Stefan problem} amounts to  finding  a triplet $(u^1,u^2,\Gamma)= (u^1(t,\cdot),u^2(t,\cdot),\Gamma_t)_{t\in [0,T] }$  such that 
\begin{subnumcases}{}
\partial_t u^i = \frac{\alpha_i}{2}\Delta u^i, & on \  int $\Gamma^i :=  \{ (t,x)\in (0,T] \times \Omega: \, x \in \text{int} \ \Gamma^i_t \}, \;  i =1,2,$ \label{eq:2SPHeat}
\\ [0.5em] 
u^i(0-,\cdot)  = u^i_0,  & on  \ $\Gamma^i_{0-}, \; i = 1,2,$\label{eq:2SPInitial}
\\[0.5em]       
\partial_{\mathfrak{n}} u^i = 0,  & on  $\ \partial \Omega \cap \Gamma^i := \{(t,x)\in [0,T] \times \partial\Omega \ : \ x \in  \Gamma^i_t \} , \;  i =1,2, $  \label{eq:2SPNeumann} 
\\[0.5em]
V = \frac{\alpha_2}{2L} \partial_\nu u^2 -  \frac{\alpha_1}{2L}  \partial_\nu u^1,  & on \ $ \partial \Gamma  := \{(t,x)\in [0,T] \times \Omega \ : \ x \in \partial \Gamma_t \} \label{eq:2SPStefan}, $\\[0.5em] 
u^1  = u^2 = 0,  & on  $\partial \Gamma, $ \label{eq:2SPDirichlet} 
\end{subnumcases}
where $\Gamma^1_t = \Omega \setminus \Gamma_t$, $\Gamma^2_t = \Gamma_t$ represent respectively the liquid and the solid region, and $\mathfrak{n}$ is the outward normal vector field on $\partial \Omega$. In \eqref{eq:2SPStefan}, $V$ is the outward normal velocity of $\partial \Gamma_t$, $\nu$ the outward normal vector field on $\partial \Gamma_t$, $L$ the latent heat of fusion, and $\alpha_1,\alpha_2$ the thermal diffusivities. An illustration of the two-phase Stefan problem is given in \cref{fig:PDE}. We suppose that the boundary $\partial \Gamma_t $ is in the solid region so that $\Gamma_t^1$ (resp. $\Gamma_t = \Gamma_t^2$) is relatively open (resp. closed) in $\Omega$.  

The initial temperature in the solid $u_0^2$ is always assumed to be non-positive. At the same time, the temperature in the liquid $u_0^1$ is assumed to be either non-negative or non-positive everywhere. For convenience, we introduce the  parameter $\eta := \text{sign}(u_0^1)$ which indicates whether the liquid is initially regular ($\eta = 1$) or \textit{supercooled} ($\eta = -1$). 

\begin{figure}[h]
\caption{Illustration of the two-phase Stefan problem \eqref{eq:2SPHeat}-\eqref{eq:2SPDirichlet}. Evolution of the liquid region (blue) and solid region (white) in a freezing regime for some time points $0\le s < t \le T$.}
\vspace{-2mm}
\begin{subfigure}[b]{0.49\textwidth}
    \centering   \includegraphics[height=2in,width=2in]{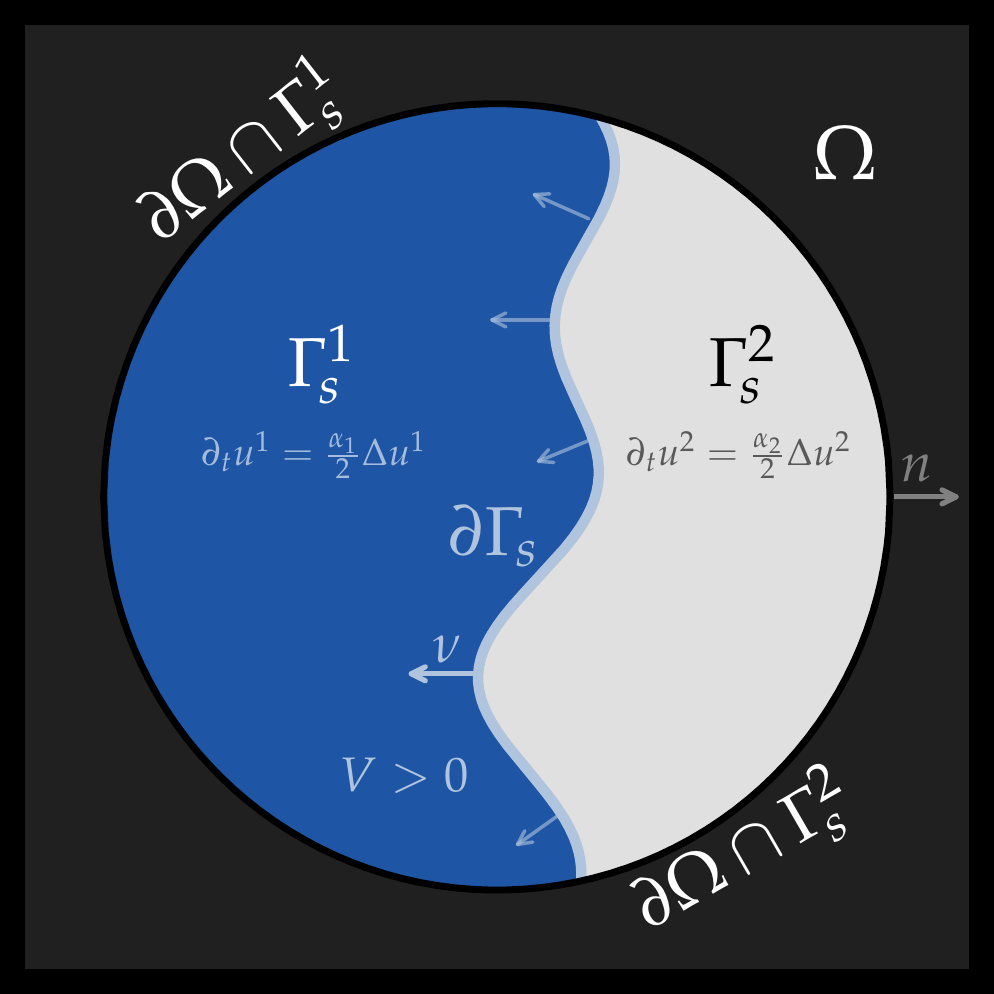}
    \label{fig:PDE1}
\end{subfigure}
\begin{subfigure}[b]{0.49\textwidth}
    \centering \includegraphics[height=2in,width=2in]{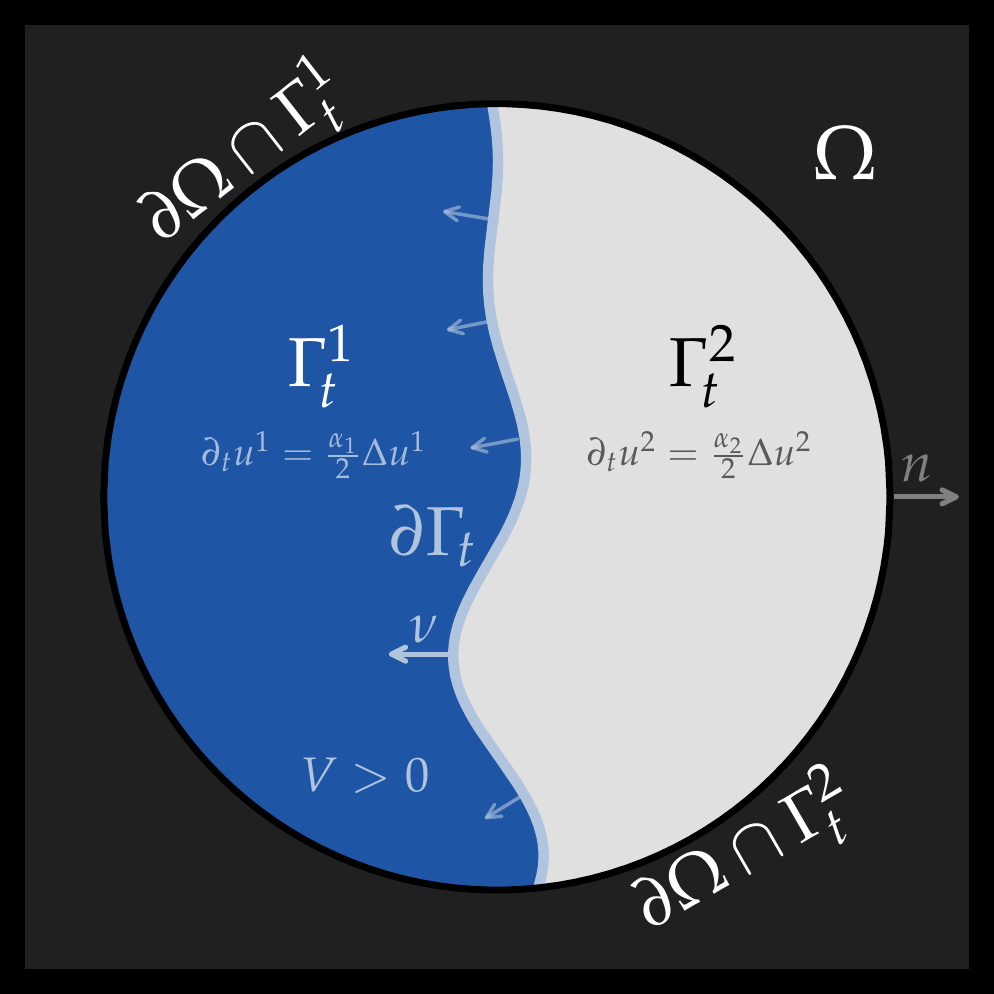}
    \label{fig:PDE2}
\end{subfigure}
\label{fig:PDE}
\end{figure}

\subsection{Probabilistic Solutions}

We assume throughout that $\lVert u_0^i\rVert_{L^1(\Gamma_{0-}^i)} = 1,$ $i=1,2$. This requirement can be easily removed, see Remark \ref{rem:initConstants} below. Further, we let $X^i$ be a  Brownian motion with diffusivity $\alpha_i$ that is normally reflected along $\partial \Omega$ and absorbed when hitting the moving interface $\partial \Gamma$ (in light of \eqref{eq:2SPHeat}, \eqref{eq:2SPNeumann}, \eqref{eq:2SPDirichlet}). More specifically,
\begin{align}\label{eq:rBM}
    X_t^i = X_0^i + \sqrt{\alpha_i} \ W_{t\wedge \tau^i}^i + l_{t\wedge \tau^i}^i,\;0\le t \le T,
    \quad 
    \tau^i = \inf\{t\in [0,T]:\, X_0^i + \sqrt{\alpha_i} \ W_t^i + l_t^i \notin \Gamma_t^i \},
\end{align}
where $W^i$ is a standard Brownian motion in $\R^d$ and $l^i$ the \emph{local time process} at $\partial \Omega$. In addition, we take $X_0^i\sim |u_0^i|(x)\,\mathrm{d}x$. We can now state the definition of a probabilistic solution. 
\begin{definition} \label{def:probSolNoTens}
We say that $(\mu^1,\mu^2,\Gamma)$ is a probabilistic solution of the Stefan problem \eqref{eq:2SPHeat} $\!-$\eqref{eq:2SPDirichlet} if for all $t \in [0,T]$, one has 
\begin{equation}
\label{eq:probaCond}
\int_{\Gamma_{0-}}\psi -  \int_{\Gamma_t  } \psi 
   = \frac{1}{L}\big(\eta \E^{\mu^1}[  \psi(X_{\tau^1}^1)\,\mathds{1}_{\{ \tau^1 \le t\}}] - \E^{\mu^2}[  \psi(X_{\tau^2}^2)\,\mathds{1}_{\{ \tau^2 \le t\}}]\big) , \quad \psi \in \mathcal{C}^{\infty}_c(\Omega),
\end{equation}
where $\mu^i$ is the law of $X^i$, $i=1,2$. 
\end{definition} 

We next prove that the definition of a probabilistic solution is consistent with that of a classical one, as already shown for the one-phase problem in \cite[Proposition 5.5]{MishaScaling}.  

\begin{proposition}\label{prop:GrowthCondNoTension}
Suppose that  $(u^1,u^2,\Gamma)$ is a classical solution of the Stefan problem  \eqref{eq:2SPHeat} $\!-$\eqref{eq:2SPDirichlet}. Let $X^i$ be as in \eqref{eq:rBM} and set $\mu^i=\Pb \circ (X^i)^{-1}$. Then, $( \mu^1,\mu^2 ,\Gamma)$ is a probabilistic solution of \eqref{eq:2SPHeat} $\!-$\eqref{eq:2SPDirichlet}. 
\end{proposition}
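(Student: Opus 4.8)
The plan is to show that both sides of \eqref{eq:probaCond} are equal to the same space-time boundary integral
\[
\frac{1}{2L}\int_0^t\!\int_{\partial\Gamma_s}\psi\,(\alpha_1\partial_\nu u^1-\alpha_2\partial_\nu u^2)\,\mathrm d\calH^{d-1}\,\mathrm ds,
\]
where $\nu$ is the outward normal of the solid $\Gamma_s$. The left-hand side will be handled through the Stefan condition \eqref{eq:2SPStefan}, and the right-hand side through the probabilistic representation of $u^1,u^2$ together with a boundary-flux identity for the absorbed particles.

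For the left-hand side, I would differentiate the weighted solid volume $t\mapsto\int_{\Gamma_t}\psi$ by the Reynolds transport theorem. Since $\psi$ is time-independent and $\psi\in\calC^\infty_c(\Omega)$ vanishes near $\partial\Omega$, the only surviving contribution is from the moving interface, giving $\frac{\mathrm d}{\mathrm dt}\int_{\Gamma_t}\psi=\int_{\partial\Gamma_t}\psi\,V$ with $V$ the outward normal velocity. Substituting \eqref{eq:2SPStefan} for $V$ and integrating in time (with $\Gamma_0=\Gamma_{0-}$ for a classical solution) yields exactly the displayed boundary integral for $\int_{\Gamma_{0-}}\psi-\int_{\Gamma_t}\psi$.

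For the right-hand side, the first step is to identify the sub-probability density $p^i(t,\cdot)$ of the surviving particles (the density of $X^i_t$ on $\{\tau^i>t\}$) with $|u^i(t,\cdot)|$ on $\Gamma^i_t$. Indeed $|u^i|$ solves \eqref{eq:2SPHeat} by linearity, using $u^2\le 0$ and that $u^1$ retains the constant sign $\eta$ by the maximum principle together with \eqref{eq:2SPDirichlet}; it vanishes on $\partial\Gamma$ by \eqref{eq:2SPDirichlet}, satisfies the Neumann condition \eqref{eq:2SPNeumann} on $\partial\Omega$, and starts from $|u^i_0|$. Since this is precisely the forward Kolmogorov problem solved by $p^i$ for the reflected–absorbed motion \eqref{eq:rBM}, uniqueness gives $p^i=|u^i|$, and the normalization $\|u^i_0\|_{L^1}=1$ makes $|u^i_0|$ a genuine law. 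The second step is the absorption identity
\[
\E^{\mu^i}\!\big[\psi(X^i_{\tau^i})\mathds 1_{\{\tau^i\le t\}}\big]=-\frac{\alpha_i}{2}\int_0^t\!\int_{\partial\Gamma_s}\psi\,\partial_{n_i}p^i\,\mathrm d\calH^{d-1}\,\mathrm ds,
\]
with $n_i$ the outward normal of $\Gamma^i_s$. I would derive it by combining two expansions: Itô's formula for $\psi(X^i_{t\wedge\tau^i})$ (the local-time term drops since $\nabla\psi=0$ near $\partial\Omega$, and the martingale part has zero mean), which expresses $\E[\psi(X^i_{t\wedge\tau^i})]-\E[\psi(X^i_0)]$ via $\int\!\int\Delta\psi\,p^i$; and differentiation of $\int_{\Gamma^i_t}\psi p^i=\E[\psi(X^i_t)\mathds 1_{\{\tau^i>t\}}]$ via Reynolds and Green's second identity, which expresses $\E[\psi(X^i_t)\mathds 1_{\{\tau^i>t\}}]-\E[\psi(X^i_0)]$ via $\int\!\int p^i\Delta\psi$ plus a boundary flux. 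Subtracting cancels the $\Delta\psi$ volume terms and isolates the boundary flux, which is the claim. Finally I would insert $p^2=-u^2$ with $n_2=\nu$ and $p^1=\eta u^1$ with $n_1=-\nu$, so that $\partial_{n_2}p^2=-\partial_\nu u^2$ and $\partial_{n_1}p^1=-\eta\partial_\nu u^1$; plugging these into the right-hand side of \eqref{eq:probaCond} and using $\eta^2=1$ reproduces the same boundary integral as the left-hand side.

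The main obstacle is the rigorous justification of these two structural inputs on a genuinely time-dependent domain: identifying $p^i=|u^i|$ needs uniqueness for the Fokker–Planck problem of a reflected–absorbed diffusion in the moving region $\Gamma^i_t$ together with sign preservation of $u^1$, while the Reynolds transport theorem, Green's identity and the absorption identity all presuppose enough space-time regularity of $\partial\Gamma_t$ for the normal velocity $V$, the normal traces $\partial_\nu u^i$, and the boundary local time of $X^i$ to be well defined and integrable. For a classical solution these are available by assumption, so the remaining task—matching signs and normals across the two phases—is routine bookkeeping.
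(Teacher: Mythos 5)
Your proposal is correct — the sign and normal-vector bookkeeping ($n_2=\nu$, $\partial_{n_2}p^2=-\partial_\nu u^2$; $n_1=-\nu$, $\partial_{n_1}p^1=-\eta\,\partial_\nu u^1$, with $\eta^2=1$) does reproduce the same interface flux on both sides — but it is organized along a genuinely different route than the paper's proof, so a comparison is worthwhile. Both arguments rest on the same two pillars: the identification of the surviving-particle subdensity with $|u^i|$ (the paper's identity \eqref{eq:UvsProb}) and moving-domain calculus (Reynolds/It\^o plus Green's identity). You differ in two places. First, the paper proves \eqref{eq:UvsProb} \emph{constructively}: it starts from the backward Feynman--Kac representation $u^i(t,x)=\E^{\Pb_x}[u_0^i(X_t^i)\,\mathds{1}_{\{\overleftarrow{\;\tau}^i_t>t\}}]$ with the backward exit times $\overleftarrow{\;\tau}^i_t$ and then uses time reversibility of reflected Brownian motion to flip to the forward picture; this hands it the sign of $u^i$ for free. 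You instead characterize $p^i$ as the solution of the forward Kolmogorov problem and invoke uniqueness, which forces you to establish sign preservation of $u^1$ separately (maximum principle) and, as you rightly flag, to supply a uniqueness statement for a Fokker--Planck equation with absorption in a \emph{moving} domain — the one genuinely nontrivial analytic input of your route, which the paper's duality argument circumvents. Second, your decomposition reduces both sides of \eqref{eq:probaCond} symmetrically to the common flux $\frac{1}{2L}\int_0^t\int_{\partial\Gamma_s}\psi\,(\alpha_1\partial_\nu u^1-\alpha_2\partial_\nu u^2)$, converting the stopped expectations via an explicit absorption identity; the paper never isolates this flux on the probabilistic side — it pushes $L\frac{\mathrm{d}}{\mathrm{d}t}\int_{\Gamma_t}\psi$ through the divergence theorem and the heat equation into $\sum_i\big(\frac{\mathrm{d}}{\mathrm{d}t}\int_{\Gamma_t^i}u^i\psi-\frac{\alpha_i}{2}\int_{\Gamma_t^i}u^i\Delta\psi\big)$ and matches this, via It\^o, with $(-1)^i\frac{\mathrm{d}}{\mathrm{d}t}\E^{\mu^i}[\psi(X^i_{\tau^i})\,\mathds{1}_{\{\tau^i\le t\}}]$. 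Your meet-in-the-middle arrangement buys transparency: the energy balance ``absorbed heat $=$ boundary flux of the survival density'' is explicit, and all sign conventions are concentrated in one identity. The paper's arrangement buys robustness: the Feynman--Kac/time-reversal derivation of \eqref{eq:UvsProb} avoids the moving-domain uniqueness question entirely, and it is precisely the template that generalizes in \cref{prop:GrowthCondTension}, where the representation of $u^i$ acquires the extra curvature term $-\gamma K^i$ — a generalization that would be less immediate from your flux-matching formulation.
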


\begin{proof}
See \cref{app:GrowthCondNoTension}.
\end{proof}

In the course of the proof of \cref{prop:GrowthCondNoTension}, we find that
\begin{equation}\label{eq:UvsProb}
|u^i|(t,x)\,\mathrm{d}x = \Pb(X_t^i \in \mathrm{d}x,\,\tau^i>t), \quad i=1,2. 
\end{equation}
The temperature, albeit not our primary focus, can thus be retrieved from the (sub)density of ``survived'' particles (i.e., those not yet absorbed by the moving boundary $\partial \Gamma$) once $\Gamma = (\Gamma_t)_{t\in [0,T]}$ has been estimated. Further details are given in \cref{sec:longterm}. 

\begin{remark}\label{rem:identity}
Suppose that the boundary of the ice region lies entirely in $\Omega$ for all $t \in [0,T]$.  Then, a direct approximation argument shows that \eqref{eq:probaCond} holds also for smooth test functions which are not compactly supported in $\Omega$, and we may choose $\psi\equiv 1$ to obtain
\begin{equation*}\label{eq:probaCondPsi1}
|\Gamma_{0-}| -  |\Gamma_t  |
   = \;  \eta\Pb( \tau^1 \le t) - \Pb( \tau^2 \le t).
\end{equation*} 
The above identity relates the change in volume of the solid region on the left-hand side to the exit probabilities of liquid and solid particles from their respective regions on the right-hand side, and can be interpreted as energy conservation.
\end{remark}

\begin{remark}\label{rem:initConstants}
Suppose that $(u^1,u^2,\Gamma)$ is a classical solution of   \eqref{eq:2SPHeat} $\!-$\eqref{eq:2SPDirichlet} with initial temperature functions 
$u_0^i$ such that $\lVert u_0^i\rVert_{L^1(\Gamma_{0-}^i)} = c_i \in (0,\infty)$, $i=1,2$. After a simple adaptation of the proof of \cref{prop:GrowthCondNoTension}, the  growth condition \eqref{eq:probaCond} becomes 
\begin{equation}\label{eq:probaCond2General}
\int_{\Gamma_{0-}}\psi -  \int_{\Gamma_t  } \psi 
   = \frac{1}{L}\big(\eta \ c_1\E^{\mu^1}[  \psi(X_{\tau^1}^1)\,\mathds{1}_{\{ \tau^1 \le t\}}] - c_2\E^{\mu^2}[  \psi(X_{\tau^2}^2)\,\mathds{1}_{\{ \tau^2 \le t\}}]\big), \quad \psi \in \mathcal{C}_c^{\infty}(\Omega).
\end{equation}
The additional flexibility in the initial temperature  when $c_i\ne 1$ proves useful in the numerical experiments (see  Sections \ref{sec:longterm} and  \ref{sec:3D2PTension}). 
\end{remark}
\begin{example}\label{ex:OnePhase}
The \textit{one-phase Stefan problem} consists of setting $u^2(t,x) \equiv 0$ in the solid region. If the  liquid temperature is initially positive, then the solid is necessarily melting, i.e.: $\Gamma_t \subseteq \Gamma_s \subseteq \Gamma_{0-}$ for all $0\le s \le t \le T$.  Writing $X = X^1$, $\tau = \tau^1$, and $\mu = \mu^1$,   the growth condition \eqref{eq:probaCond} simply reads
\begin{equation}\label{eq:probaCond1Phase}
\int_{\Gamma_{0-}\setminus \Gamma_t}\psi
   = \frac{\eta}{L}\E^{\mu}[  \psi(X_{\tau})\,\mathds{1}_{\{ \tau \le t\}}], \quad \psi \in \mathcal{C}_c^{\infty}(\Omega).
\end{equation}
\end{example}
\subsection{Adding  Surface Tension}

Consider the Stefan problem  $\eqref{eq:2SPHeat}-\eqref{eq:2SPDirichlet}$ with the Dirichlet boundary condition \eqref{eq:2SPDirichlet} replaced by 
\begin{equation}\label{eq:GibbsThomson}
    u^1  = u^2 =  - \gamma \  \kappa_{\partial \Gamma}, \quad  \text{on } \  \partial \Gamma.  \tag{2e'}
\end{equation}
The term $\kappa_{\partial \Gamma_t}$ is the mean curvature of $\partial\Gamma_t$ and $\gamma >0$ the surface tension coefficient. We use the convention that $\kappa_{\partial \Gamma_t}(x)$ is nonnegative when $\Gamma_t$  is locally convex at $x\in \partial \Gamma_t$.      
Equation \eqref{eq:GibbsThomson} captures the so-called \textit{Gibbs-Thomson}  effect  which postulates that the temperature at the interface is negative for strictly convex boundaries \cite{vis1989,luc}. In other words, the freezing point of the liquid decreases at points of convexity and increases at points of concavity. Clearly, surface tension effects only appear if $d\ge 2$. 
We now revisit  \cref{def:probSolNoTens} in the presence of surface tension. 


\begin{definition} \label{def:probSolTension}
We say that $(\mu^1,\mu^2,\Gamma)$ is a probabilistic solution of the Stefan problem \eqref{eq:2SPHeat} $\!-$\eqref{eq:GibbsThomson} with surface tension if for all $t \in [0,T]$, one has

\begin{align}\label{eq:probaCondTension}
 &\int_{\Gamma_{0-}}\psi -  \int_{\Gamma_t  } \psi 
   =  \frac{1}{L}\big(\eta \E^{\mu^1}[  \psi(X_{\tau^1}^1)\,\mathds{1}_{\{ \tau^1 \le t\}}] - \E^{\mu^2}[  \psi(X_{\tau^2}^2)\,\mathds{1}_{\{ \tau^2 \le t\}}] + \calK_{t} \big), \quad \text{where} \\[0.8em] 
&\calK_{t} = \gamma \int_0^t\int_{\partial \Gamma_s}  \left(\frac{\alpha_2}{2}\partial_{\nu} K^2 - \frac{\alpha_1}{2}\partial_{\nu} K^1\right) \psi \  , \label{eq:curvTerm} \\[1em] 
&K^i(t,x) = \E\big[ \kappa_{\! \partial \Gamma_{\overleftarrow{\; \tau}^i_{t}}}(X^i_{\overleftarrow{\; \tau}^i_{t}})\,\mathds{1}_{\{\overleftarrow{\; \tau}^i_{t} \le t\}}\ \big|\ X^i_0=x\big], \label{eq:curvK}
\end{align} 
with the backward exit times $\overleftarrow{\; \tau}^i_{t} = \inf \{s\in[0,t]:\,X_0^i+\sqrt{\alpha_i}\ W^i_s+l^i_s \notin \Gamma^{i}_{t-s} \}$. 
\end{definition} 
\begin{proposition}\label{prop:GrowthCondTension}
Suppose that  $(u^1,u^2,\Gamma)$ is a classical solution of the Stefan problem with surface tension \eqref{eq:2SPHeat} $\!-$\eqref{eq:GibbsThomson}. If  $\mu^i := \Pb \circ (X^i)^{-1}$ with $X^i$ as in \eqref{eq:rBM}, then  $( \mu^1,\mu^2 ,\Gamma)$ is a probabilistic solution of \eqref{eq:2SPHeat}$\!-$\eqref{eq:GibbsThomson}. 
\end{proposition}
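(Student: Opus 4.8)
The plan is to follow the two-pronged strategy behind \cref{prop:GrowthCondNoTension}: transform the left-hand side of \eqref{eq:probaCondTension} into a space--time boundary integral of the heat flux, and independently rewrite each probabilistic absorption term on the right-hand side as a boundary flux of the density of surviving particles. In the tension-free case these two fluxes coincide, which is exactly the content of \eqref{eq:UvsProb}; with surface tension the Dirichlet datum on $\partial\Gamma$ is $-\gamma\kappa_{\partial\Gamma}$ instead of $0$, the two fluxes differ, and I will show that their mismatch is precisely the curvature correction $\calK_t$.

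Fix $\psi\in\calC_c^\infty(\Omega)$. Applying the transport theorem to $s\mapsto\int_{\Gamma_s}\psi$ (only the interface contributes, since $\psi$ vanishes near $\partial\Omega$) and inserting the Stefan condition \eqref{eq:2SPStefan} gives
\[ \int_{\Gamma_{0-}}\psi-\int_{\Gamma_t}\psi=\frac{1}{L}\int_0^t\int_{\partial\Gamma_s}\Big(\tfrac{\alpha_1}{2}\partial_\nu u^1-\tfrac{\alpha_2}{2}\partial_\nu u^2\Big)\psi. \]
On the probabilistic side, let $p^i(s,x)\,\mathrm{d}x=\Pb(X_s^i\in\mathrm{d}x,\,\tau^i>s)$ be the sub-density of surviving particles, solving $\partial_s p^i=\tfrac{\alpha_i}{2}\Delta p^i$ on $\mathrm{int}\,\Gamma_s^i$ with $p^i=0$ on $\partial\Gamma$ and reflection on $\partial\Omega$. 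Dynkin's formula applied to $\psi(X^i_{\cdot\wedge\tau^i})$ (the local-time term drops since $\nabla\psi\equiv0$ near $\partial\Omega$) expresses $\E^{\mu^i}[\psi(X^i_{\tau^i})\mathds{1}_{\{\tau^i\le t\}}]$ through $\int_0^t\int_{\Gamma_s^i}\tfrac{\alpha_i}{2}\Delta\psi\,p^i$; transferring the Laplacian onto $p^i$ by Green's identity and using $\frac{\mathrm{d}}{\mathrm{d}s}\int_{\Gamma_s^i}p^i\psi=\int_{\Gamma_s^i}\partial_s p^i\,\psi$ (the moving-boundary term vanishing because $p^i=0$ on $\partial\Gamma$) collapses everything to
\[ \E^{\mu^i}\big[\psi(X^i_{\tau^i})\mathds{1}_{\{\tau^i\le t\}}\big]=-\frac{\alpha_i}{2}\int_0^t\int_{\partial\Gamma_s}\psi\,\partial_n p^i, \]
with $n$ the outward normal of $\Gamma_s^i$.

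Next I would set $w^i:=|u^i|-p^i$, the defect in \eqref{eq:UvsProb}. Since $|u^i|=\sigma_i u^i$ (with $\sigma_2=-1$, $\sigma_1=\eta$) and $p^i$ solve the same heat equation with the same initial data $|u_0^i|$ and the same Neumann condition but disagree on $\partial\Gamma$, the function $w^i$ solves the heat equation with zero initial data and boundary datum $-\sigma_i\gamma\kappa_{\partial\Gamma}$ on $\partial\Gamma$. Writing $\partial_n p^i=\sigma_i\partial_n u^i-\partial_n w^i$, substituting into the Dynkin identity, forming $\eta\,\E^{\mu^1}[\,\cdot\,]-\E^{\mu^2}[\,\cdot\,]$ and comparing with the transport identity, the $u^i$-fluxes reproduce the tension-free right-hand side while the $w^i$-fluxes assemble into $\int_0^t\int_{\partial\Gamma_s}\big(\tfrac{\alpha_2}{2}\partial_\nu w^2+\tfrac{\alpha_1}{2}\eta\,\partial_\nu w^1\big)\psi$. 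By linearity $w^2=\gamma\tilde K^2$ and $w^1=-\eta\gamma\tilde K^1$, where $\tilde K^i$ solves the heat equation on $\Gamma_s^i$ with zero initial data and boundary datum $\kappa_{\partial\Gamma}$; the residual flux then collapses to $\gamma\int_0^t\int_{\partial\Gamma_s}\big(\tfrac{\alpha_2}{2}\partial_\nu\tilde K^2-\tfrac{\alpha_1}{2}\partial_\nu\tilde K^1\big)\psi$, which is $\calK_t$ as soon as $\tilde K^i=K^i$.

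The final and most delicate step is to identify $\tilde K^i$ with the stochastic functional \eqref{eq:curvK}. Freezing the terminal time $t$ and setting $g(s,y):=\tilde K^i(t-s,y)$, one checks that $g$ solves the backward equation $\partial_s g+\tfrac{\alpha_i}{2}\Delta g=0$ with terminal value $g(t,\cdot)=0$ and boundary datum $\kappa$ on $\partial\Gamma_{t-s}$. Applying Itô's formula to $g(s,Y_s)$ for a reflected $\alpha_i$-Brownian motion $Y$ started at $x$ and stopped at the backward exit time $\overleftarrow{\tau}^i_t$ annihilates the drift identically; splitting at $\{\overleftarrow{\tau}^i_t\le t\}$, where the survival part vanishes by the terminal condition, then returns exactly $K^i(t,x)$. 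I expect the main obstacles to be this time-reversal bookkeeping — lining up the domain slice $\Gamma_{t-s}^i$ with the instant at which the curvature is read off, and checking that the reflecting local-time contributions cancel through the Neumann condition — together with securing enough regularity up to the moving interface to justify the transport theorem, Green's identity and Itô's formula, all of which is available for a classical solution with smooth $\partial\Gamma$.
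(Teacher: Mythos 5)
Your proposal is correct, but it is organized quite differently from the paper's proof. The paper works forward from $L\,\frac{\mathrm{d}}{\mathrm{d}t}\int_{\Gamma_t}\psi$ exactly as in \cref{prop:GrowthCondNoTension}, inserts the Feynman--Kac representation $u^i(t,x)=\E^{\Pb_x}[u_0^i(X_t^i)\,\mathds{1}_{\{\overleftarrow{\tau}^i_t>t\}}]-\gamma K^i(t,x)$ directly, and is then left with two volume terms in $K^i$ plus a boundary term $\dot{\calK}_t^{I}$ proportional to $\alpha_2-\alpha_1$ involving $\partial_\nu\psi$; it upgrades $K^i$ to a classical solution of the heat equation via Weyl's lemma and collapses the three terms into $\frac{\mathrm{d}}{\mathrm{d}t}\calK_t$ through Green's second identity. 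You instead introduce the survival sub-density $p^i$ and the defect $w^i=\sigma_i u^i-p^i$, convert both sides of \eqref{eq:probaCondTension} into interface fluxes (the Stefan condition \eqref{eq:2SPStefan} on the left, Dynkin on the right), and obtain $\calK_t$ in one stroke as the residual $w^i$-flux: no $\partial_\nu\psi$ terms ever arise and the $\alpha_2-\alpha_1$ bookkeeping is automatic, which is arguably cleaner. The obligations are dual: where the paper must pass from the stochastic definition \eqref{eq:curvK} to the PDE (Weyl's lemma), you must pass from the PDE-defined $\tilde K^i$ back to \eqref{eq:curvK}, and your backward-It\^o/time-reversal sketch is the right way to do it --- note that the hitting location at backward time $\overleftarrow{\tau}^i_t$ lies on $\partial\Gamma_{t-\overleftarrow{\tau}^i_t}$, which is the only sensible reading of the curvature subscript in \eqref{eq:curvK}. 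One small repair: under Gibbs--Thomson the sign of $u^1$ need not stay constant (the interface datum $-\gamma\kappa_{\partial\Gamma}$ can oppose $\eta$), so the identity $|u^i|=\sigma_i u^i$ you invoke may fail pointwise for $t>0$; simply define $w^i:=\sigma_i u^i-p^i$ outright --- the initial data still vanish since $\sigma_i u^i_0=|u_0^i|$ --- and nothing else in your argument changes. Finally, your flux identity for $p^i$ requires $\partial_n p^i$ to exist up to the moving interface, a parabolic-regularity demand comparable to what the paper incurs for $\partial_\nu K^i$ in Green's identity, so neither route is more demanding on that front.
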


\begin{proof}
See \cref{app:GrowthCondTension}.
\end{proof}

\section{Deep Level-set Method} \label{sec:Method}
Let $\Gamma = (\Gamma_t)_{t\in [0,T]}$ be the time-varying region occupied by the solid, and let $\Phi\!:[0,T]\times \Omega \to \R$ be
a \textit{level-set function} such that  
$$  \Gamma_t = \{x \in \Omega : \, \Phi(t,x) \le 0\} $$ 
and the free boundary $\partial \Gamma_t$ is given by the zero isocontour $\{\Phi(t,\cdot) = 0\}$. We also assume that the initial region $\Gamma_{0-}$ is encoded as the zero sublevel set of some function $\Phi_0\!: \Omega \to \R$.  
Whenever $\Phi \in \calC^{1,2}([0,T]\times \Omega)$ and $\nabla_x \Phi\neq 0$, the normal velocity, outward normal vector, and mean curvature of $\partial\Gamma$ read\footnote{The factor $\frac{1}{d-1}$ in the mean curvature  ensures that $\kappa_{\partial B}(x) = \frac{1}{|x|}$ for any ball $B$ around $0 \in \R^d$.} 
$$V = - \frac{\partial_t \Phi}{|\nabla_x \Phi|}, \quad \nu = \frac{\nabla_x \Phi}{|\nabla_x \Phi|}, \quad \kappa_{\partial \Gamma} = \frac{\text{div}(\nu)}{d-1}. $$
In particular, the Stefan condition \eqref{eq:2SPStefan} can be rewritten as 
 \begin{equation*}
     \partial_t \Phi = \frac{\alpha_1}{2L}  \nabla_x \Phi \cdot \nabla_x u^1 - \frac{\alpha_2}{2L} \nabla_x \Phi \cdot \nabla_x u^2. 
 \end{equation*}   
The \textit{signed distance function} defined by
$$\rho(t,x) = (-1)^{\mathds{1}_{\Gamma_{t}}}\ dist(x,\partial \Gamma_{t})
$$
can always be used as a level-set function and encodes all geometric properties of the interface; see \cite{AS,Soner} and \cite[Section I.2]{OsherFedkiw}. In particular, suppose that $\partial \Gamma_t$ is a smooth manifold. Then, $\rho$ is differentiable in a neighborhood of $\partial \Gamma_t$ and satisfies
$$
|\nabla_x \rho| =1, \quad 
V = -\partial_t \rho,\quad
\nu = \nabla_x \rho, \quad
\kappa_{\partial \Gamma} = \frac{\Delta_x \rho}{d-1},\qquad \text{on }\ \partial\Gamma_t.
$$

To approximate a general level-set function $\Phi$,  we parameterize the difference $\Phi(t,\cdot) - \Phi_0$ by a neural network $G\!:[0,T]\times \Omega \times \Theta \to \R$, for some   parameter set $\Theta\subset \R^p$, $p\in \N$. This leads to  the \textit{deep level-set function}  
\begin{equation}\label{eq:deepLevelSet}
    \Phi(t,x;\theta) := \Phi_0(x) + G(t,x;\theta) , \quad t\in [0,T], 
\end{equation} 
with the associated  regions $ \Gamma^{1,\theta} = \{\Phi(\cdot \ ;\theta) > 0\}$ and  $\Gamma^{\theta} =\Gamma^{2,\theta} = \{\Phi(\cdot \ ;\theta) \le  0\}$. By parameterizing the difference we make sure that the initial condition holds automatically. It also allows us to capture the occasional jumps in the solid region at $t=0$ when the initial data $(u_0^1,u_0^2)$ exhibits discontinuities; see \cref{sec:jumpExample}. 

\begin{example}\label{ex:radial}
(Radial case) Let $\Omega = B_R := \{|x|\le R\}$ and $\Gamma_{0-} =  B_{r_0}$ for some $r_0 \in (0,R)$. We can then initialize the level-set function with the signed distance  $\Phi_0(x) = |x| - r_0$. If the initial temperature inside the solid and the liquid is radially symmetric, then $\Gamma_t =  B_{r(t)}$ for some càdlàg function $r\!:[0,T] \to [0,R]$. We can therefore set $\Phi(t,x) = |x| - r(t)$, as illustrated in \cref{fig:levelSetViz}. This implies that
$\Phi(t,x) - \Phi_0(x) = r_0 - r(t)$ for all $x \in \Omega.$ 
Hence, the  neural network, although unaware of the radial symmetry, ``simply" needs to learn a function of time. Indeed, in our numerical experiments, we do not impose radial symmetry on the solution, but rather let the neural network learn this invariance through training.
\end{example}

\begin{figure}[t]
    \centering
    \caption{Time evolution of the level-set function in the radial case and projected solid region (bottom horizontal plane).}
    \includegraphics[height = 1.9in,width = 5.8in]{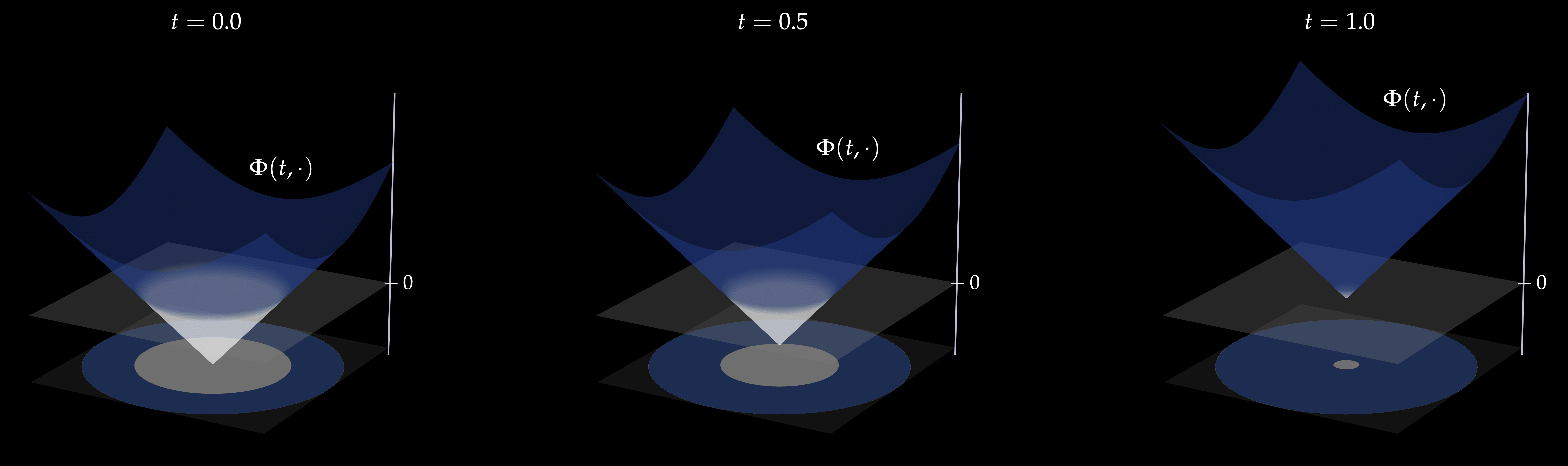}
\label{fig:levelSetViz}
\end{figure}

\subsection{Loss Function and Training}
Let us explain how the parameters in $\Gamma^{\theta}$ can be trained so as to find a probabilistic solution, which is equivalent to requiring the growth condition \eqref{eq:probaCond} to hold for every test function. This is similar to Leray type weak solutions of nonlinear partial differential equations that demand certain equations to be satisfied for a class of test functions, and our approach can possibly be applied to compute this type of solutions as well. We transform the growth condition into a loss function by forcing \eqref{eq:probaCond} to hold for a large but finite number of test functions. Specifically, letting $\Psi \subset \calC_c^{\infty}(\Omega)$ be a finite collection of test functions, we aim to
\begin{equation}\label{eq:minimization}
    \text{minimize }\; \calL(\theta) = \sum_{\psi\in \Psi} \calL(\theta, \psi) \; \text{ over } \; \theta \in \Theta, \quad \text{where}
\end{equation}
\begin{align}\label{eq:loss}
    \calL(\theta,\psi) =  \bigg \lVert  \int_{\Gamma_{0-}} \psi - \int_{\Gamma^{\theta}_{\cdot}} \psi   -  \frac{1}{L} \big( \eta\  \mathbb{E}[\psi(X^1_{\tau^{1,\theta}})\,\mathds{1}_{\{\tau^{1,\theta} \  \le \ \cdot\}}] -  \mathbb{E}[\psi(X^2_{\tau^{2,\theta}})\,\mathds{1}_{\{\tau^{2,\theta} \  \le \ \cdot\}}] \big) \bigg \rVert_{L^2([0,T])}^2. 
\end{align} 
We expect that $\Gamma^{\theta}$ induces a probabilistic solution if $\calL(\theta)=0 $ and  $\Psi$ grows to a dense subset of $\calC_c^{\infty}(\Omega)$. In order to compute the loss function in \eqref{eq:minimization},  we simulate reflected Brownian particles $X^j = (X_{t_n}^j)_{t_n \in \Pi^N}$, $j=1,\ldots,J$, on a regular time grid $\Pi_N:=\{t_n = n\Delta t \! : n=0,\ldots, N \}$, $\Delta t = \frac{T}{N}$. 
This gives the empirical loss 
\begin{align}
   & \calL(\theta) = \sum_{\psi\in \Psi} \sum_{n=1}^N \ell_{n}(\theta,\psi)^2,  \label{eq:totalLoss}\\[0.4em]
   & {\scriptsize \ell_n(\theta,\psi)} =  \scriptsize \int_{ \Gamma_{0-}} \psi   - \int_{\Gamma^{\theta}_{t_n}} \psi   - \ \frac{1}{JL}\sum_{j=1}^J\Big(\eta \ \psi(X^{1,j}_{\tau^{1,\theta,j}})\,\mathds{1}_{\{\tau^{1,\theta,j}  \le t_n\}} - \ \psi(X^{2,j}_{\tau^{2,\theta,j}})\,\mathds{1}_{\{\tau^{2,\theta,j}  \le t_n\}}\Big) . \label{eq:partialLoss}
\end{align}

We then train the deep level-set function by gradient descent. That is, the parameters are updated according to 
\begin{equation}\label{eq:SGD}
    \theta_{m+1} = \theta_{m} - \zeta_m \nabla_\theta \calL(\theta_m), \quad  m = 1, \ldots, M,
\end{equation}
for some $M\in \N$. However, the map $\theta \mapsto \tau^{\theta}$ is piecewise constant, where $\tau^{\theta}$ denotes a  stopping time on the right-hand side of \eqref{eq:partialLoss}. For concreteness, let us suppose that $\tau^{\theta}$ is the exit time of a liquid particle from the liquid region. In terms of the parameterized level-set function, this reads
$$\tau^{\theta} = \inf\{t_n, n\le N: \, \Phi(t_n,X_{t_n}^1;\theta) < 0\}. $$
Unless a trajectory $(X^1_{t_n}(\omega))_{n=0}^N$ is exactly on the solid-liquid interface at the stopping time $\tau^{\theta}(\omega)$, i.e., $\Phi(\tau^{\theta}(\omega),X_{\tau^{\theta}}^1(\omega);\theta)=0$,  the value of $\tau^{\theta}(\omega)$ will remain the same after an infinitesimal change in the parameter vector $\theta$. 
Hence, the gradient descent would not converge due to the vanishing gradient $\nabla_{\theta} \tau^{\theta}$. To circumvent this issue, the stopping times are relaxed according to a procedure explained in the next section. 

Regarding the test functions, we choose  the Gaussian kernels\footnote{If $\beta_k$ is large enough, then, at least numerically,  $\psi_k$ is  compactly supported in $\Omega$. }   
 $$\Psi = \left\{\psi_k(x) = e^{-\beta_k |x-z_k |^2} : \,  \beta_k >0, \ z_k\in \Omega, \  k=1,\ldots, K \right\}, \quad K\in \N.$$
The centers $z_k$ and widths $\beta_k$ are randomized in each training iteration to cover the whole domain and to capture a variety of scales. 

\subsection{Relaxed Stopping Times}

 We adapt the relaxation procedure of \cite{ReppenSonerTissotFB}, proposed in the context of optimal stopping. In short, it consists of replacing the sharp boundary $\partial \Gamma_t$ by a \textit{mushy} (or \textit{fuzzy}) \textit{region} where stopping may or may not take place.   
 First, we transform the neural network $\Phi(t,\cdot\ ;\theta)$ to locally approximate the signed distance to the interface $\partial \Gamma_t$. This is achieved by normalization, namely  by setting  $\rho(t,\cdot \ ; \theta) = \frac{\Phi(t,\cdot\ ;\theta)}{|\nabla_x \Phi(t,\cdot\ ;\theta)|}$.\footnote{Indeed, write $\phi = \Phi(t,\cdot\ ;\theta)$  and  fix $x \notin  \partial \Gamma_t$, i.e., $\phi(x) \ne 0$. If $\nu$ is the outward normal vector at the point on $\partial \Gamma_t$ closest to $x$, then the signed distance $\rho \in \R$ of $x$ to $\partial \Gamma_t$ can be characterized as the smallest solution in absolute value to $\phi(x - \rho \nu) = 0$. A Taylor approximation gives $0 = \phi(x - \rho \nu) \approx \phi(x) - \rho \ \nu \cdot \nabla \phi(x)$, so  $\rho \approx  \frac{\phi(x)}{|\nabla \phi(x)|}$ as desired. Note that this approximation is only accurate close to the interface, 
 which is precisely where $\rho$ plays a role in the algorithm.} We note that the spatial gradient of $\Phi(\cdot; \theta)$ can be exactly and effortlessly computed using automatic differentiation.  

 Next, the approximate signed distance is converted into a stopping probability. Without loss of generality, we describe the procedure for liquid particles which are stopped when entering the solid region. For solid particles, see \cref{rem:solidParticles}. Given $\varepsilon > 0$, define the phase indicator variables 
  $$q^{\theta,\varepsilon}_n = (\chi^{\varepsilon} \circ \rho)(t_n,X_{t_n}^1;\theta),  \quad n = 0,\ldots,N, $$
with the \textit{relaxed phase function} $\chi^{\varepsilon}(\rho) = \frac{(1 - \rho/\varepsilon)^+}{2} \wedge 1$; see   \cref{fig:hFunction} for an illustration. The phase indicator variable is equal to $1$ in the ``$\varepsilon-$interior" of the solid region (where $\rho(t,x;\theta) \le -\varepsilon$) and is $0$ when $\rho(t,x;\theta) \ge \varepsilon$. Inside $\partial \Gamma_t^{\theta,\varepsilon}$, the phase indicator variable decreases linearly with the signed distance to the interface. The parameter $\varepsilon>0$ specifies the width of the \textit{mushy region} $$\partial \Gamma_t^{\theta,\varepsilon} = \{x \in \Omega: \, |\rho(t,x;\theta)| < \varepsilon \},$$ in which a liquid particle enters the solid region with probability $q^{\theta,\varepsilon}_n \in (0,1)$. 

For Brownian particles, a natural choice for $\varepsilon$ is $\varepsilon = \sqrt{d \ \Delta t}$, this being the order of the typical Euclidean distance travelled by a $d$-dimensional standard Brownian motion over a time sub-interval $[t,t+\Delta t]$. For Brownian particles with diffusivity $\alpha >0$, we set instead $\varepsilon  = \sqrt{\alpha d \Delta  t}$.
When the liquid and the solid particles have different diffusivities, we consider distinct mushy regions.  
Finally, the stopped values in \eqref{eq:partialLoss} are  replaced by
$$\psi(X_{\tau^{1,\theta}}^1)\,\mathds{1}_{\{\tau^{1,\theta} \  \le \ t_n\}} \approx  \sum_{l=1}^n Q^{\theta,\varepsilon}_l \ \psi(X_{t_l}^1),$$
with the random stopping probabilities recursively given by
$$\quad Q_{n}^{\theta, \varepsilon} = q^{\theta,\varepsilon}_n\underbrace{\Big(1- \sum_{l<n} \ Q_{l}^{\theta, \varepsilon}  \Big)}_{\text{same phase until $t_n$}}, \quad n = 0,\ldots, N. $$

  Similarly, the integrals in \eqref{eq:partialLoss} are computed  as  
  \begin{equation}\label{eq:intMC}
      \int_{\Gamma_{t_n}^{\theta}} \psi \ \approx \  
  \frac{|\Omega|}{J}\sum\limits_{j=1}^{J} q_{n,0,j}^{\theta, \varepsilon}\ \psi(U^j), \quad   \  q_{n,0,j}^{\theta, \varepsilon} = (\chi^{\varepsilon} \circ \rho)(t_n,U^j;\theta), \quad  U^j \overset{\text{i.i.d.}}{\sim} \ \text{Unif}(\Omega),
  \end{equation}
where Unif$(\Omega)$ is the uniform distribution on $\Omega$. Notice that the terms $\int_{ \Gamma_{0-}} \psi_k $ do not depend on~$\theta$ so they can  be computed only once, e.g., in an offline phase. Because of the above relaxation, the gradient of $\calL(\theta)$ does not vanish anymore. We can therefore apply gradient descent to minimize the loss function, thus finding an approximation of the solid region $\Gamma$.  
\begin{figure}[t]
     \centering
     \caption{Illustration of the relaxed phase function $\chi^{\varepsilon}(\rho) = \frac{(1 - \rho/\varepsilon)^+}{2} \wedge 1$.}
     \includegraphics[height = 1.8in,width = 2.5in]{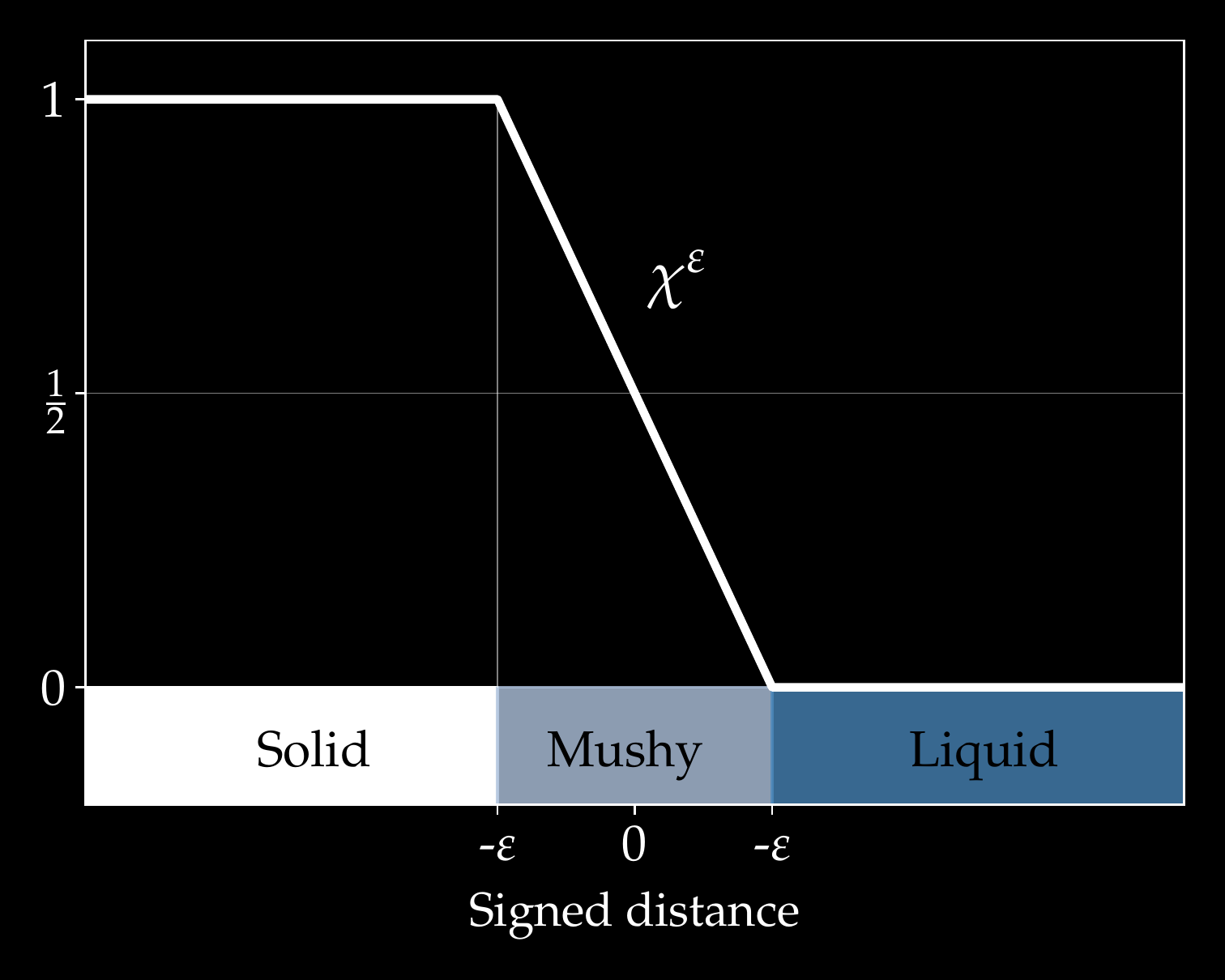}
     \label{fig:hFunction}
 \end{figure}
 

\begin{remark}\label{rem:solidParticles}
For solid particles,  the phase indicator variables are defined analogously by $$q^{\theta,\varepsilon}_n = 1 - (\chi^{\varepsilon} \circ \rho)(t_n,X_{t_n}^2;\theta).$$ We can thereafter follow the  steps above to approximate the stopped values $\psi(X_{\tau^{2,\theta}}^2)\,\mathds{1}_{\{\tau^{2, \theta} \  \le \ t_n\}}$.
\end{remark}

\subsection{Jump penalty}\label{sec:jumpPenalty}
So far, there is no control on the change in volume of the solid region. Although jumps can be observed \cite{DNS}, we may find a solution which satisfies the growth condition but exhibits non-physical jumps, i.e., jumps of non-physically large size. To deal with this issue, we consider the more general loss function 
$\underline{\calL}(\theta) = \calL(\theta) + \lambda \calP(\theta)$, with the penalty term

\begin{equation}\label{eq:penalty}
    \calP(\theta) =  \calR\Big( \max\limits_{n=1,\ldots,N} |\Gamma^{\theta}_{t_n} \triangle \Gamma^{\theta}_{t_{n-1}}| - C \Big)  , \quad C >0, \quad \calR(\Delta) = \Delta^+ - 0.01\Delta^-, 
\end{equation}
where $\triangle$ is the symmetric difference operation, and $C > 0$ is a constant describing the ``allowed" volume increment between time steps. We note that $\calR$ is the so-called \textit{Leaky ReLU} activation function, whose choice is justified as follows.  If the maximum change in volume exceeds the threshold $C$,  the deep level-set function is  severely penalized by the ``ReLU" part of $\calR$ (namely, $\Delta^+$). On the other hand, the ``leaky" part of $\calR$, i.e., $0.01\Delta^-$, is introduced to slightly incentivize the solid to shrink/grow smoothly over time. 

 
 The Lagrange multiplier $\lambda$ is decomposed into  the product of a fixed ratio $\lambda_0 > 0$  (typically less than one) and a dynamically updated factor $\lambda^{\text{\tiny  scale}}$  such that  $\lambda^{\text{\tiny scale}} \ \nabla_{\theta} \calP(\theta)$ 
 matches the scale of  the gradient $\nabla_{\theta}\calL(\theta)$. For this task, we employ the  ``learning rate annealing algorithm"  outlined in  \citet{WangLearningRate}. It is a popular normalization technique for physics-informed neural networks when the loss function  comprises heterogeneous components.  Finally, the volume of $\Gamma^{\theta}_{t_n} \triangle \ \Gamma^{\theta}_{t_{n-1}}$ is estimated using a Monte Carlo simulation as in \eqref{eq:intMC}. 

  Throughout the numerical experiments, we set the jump  threshold to $C = \frac{|\Omega|}{2}$. In other words, the solid region is told to grow/shrink by at most half the size of the domain per time step. This is a loose constraint, which nevertheless rules out undesirable time discontinuities as we shall see in the numerical experiments. 
 At the same time, we shall see in \cref{sec:jumpExample} that the regularized algorithm is still capable of producing 
 \textit{physical jumps}, whose definition is now recalled. We also refer the interested reader to \cite{MishaRadialTension}. 
Let us focus on the radial case and consider for $r\ge 0$, $\Delta \in \R$, the annulus
\begin{equation}\label{eq:annuli}
    A_{r,\Delta } := \begin{cases}B_{r+\Delta} \setminus B_{r}, & \Delta \ge 0,\\
    B_{r}  \setminus B_{r+\Delta}, & -r \le \Delta < 0,\\
    B_r, & \Delta < -r. 
    \end{cases} 
\end{equation}
A radial solution $(\Gamma_t)_{t\in [0,T]} = (B_{r(t)})_{t\in [0,T]}$ to the Stefan problem is \textit{physical} if for all $t\in [0,T]$, the jump $\Delta r (t) = r(t) - r(t-)$, if positive, satisfies 
\begin{align}\label{eq:physicalJump}
\Delta r(t) = 
\inf\bigg\{\Delta > 0 \ : \  |A_{r(t-),\Delta }| > -\frac{1}{L}\int_{A_{r(t-),\Delta  }}   u^1(t-,x)\bigg\}. 
\end{align}
If $\Delta r(t)=r(t)-r(t-)>0$, we gather that for physical solutions, $\Delta r(t)$ is the smallest solution of 
\begin{align}
|A_{r(t-),\Delta }|&=  -\frac{1}{L}\int_{A_{r(t-),\Delta }} u^1(t-,x). 
 \label{eq:eqPosJump} 
\end{align}
The left-hand side in \eqref{eq:eqPosJump} is the volume absorbed by the solid region $\Gamma$ at time $t$. The right-hand side in \eqref{eq:eqPosJump} is the aggregate change in temperature upon freezing of $A_{r(t-),\Delta }$.

\begin{remark}
In the non-radial case, it is not sufficient to consider the volume increments only. The formulation of an appropriate physicality condition is in fact subject of ongoing research.
\end{remark}


 \subsection{Algorithm} 

The deep level-set method is summarized in \cref{alg:DLSM1}. The learning rate process $(\zeta_{m})_{m=1}^M$ is computed using the Adam optimizer \cite{Kingma}. For the Monte Carlo simulation of particles, we use antithetic sampling  to explore the domain in a symmetric fashion and to speed up computation. More specifically, we simulate two antithetic particles  for each generated initial point $x_0^i$, namely  
\begin{align*}
    X^i_\cdot &= x_0^i + \sqrt{\alpha_i} \ W_{\cdot\wedge \tau^i}^i + l_{\cdot\wedge \tau^i}^i, \quad 
    \tau^i = \inf\{t\in [0,T]:\,x_0^i + \sqrt{\alpha_i} \ W_t^i + l^i_t \notin \Gamma_t^i \},\\[0.5em]
    \widetilde{X}^i_\cdot &= x_0^i - \sqrt{\alpha_i} \ W_{\cdot\wedge \widetilde{\tau}^i}^i + \widetilde{l}_{\cdot\wedge \widetilde{\tau}^i}^i, \quad 
   \widetilde{\tau}^i = \inf\{t\in [0,T]:\,  
   x_0^i - \sqrt{\alpha_i} \ W_t^i + \widetilde{l}_t^i
   \notin \Gamma_t^i \}. 
\end{align*}

\begin{algorithm}[H]
\caption{(Deep level-set method) }\label{alg:DLSM1} 
\vspace{1mm}
\textbf{Given}: $\Phi_0=$ initial level-set  function, $J=$ batch size, $M=$ \# training iterations, $\Psi = $ set of test functions, $C=$ jump constant, $\varepsilon=$ mushy region width\\[-0.8em] \noindent\rule{\textwidth}{0.5pt} 
\begin{itemize}
 \setlength \itemsep{-0.1em}
 
\item[I.] \textbf{Initialize} $\theta_0 \in \Theta$
\item[II.] For $m = 0,\ldots, M-1$:
    \begin{enumerate}
    \setlength \itemsep{0.5em}

  \item \text{\textbf{Simulate} $X^{i,j} = (X_{t_n}^{i,j})_{n=0}^N$ ($i=1$: liquid, $i=2$: solid),  $\ U^j  \ \overset{\text{i.i.d.}}{\sim} \  \text{Unif}(\Omega)$, $j=1,\ldots,J$}
  
  \item \textbf{Approximate} the signed distance function of $\partial\Gamma$ as $\rho(\cdot;\theta_{m}) = \frac{ \Phi(\cdot;\ \theta_{m})}{|\nabla_x \Phi(\cdot;\ \theta_{m})|}$
  
        \item For $n=0,\ldots,N$, $\psi \in \Psi$, \textbf{compute}: 
        \begin{itemize}
         \item \textbf{Phase indicator variables:} 
        \text{$q_{n,i,j}^{\theta_{m},\varepsilon} = \begin{cases}
        (\chi^{\varepsilon} \circ \rho)(t_n,U^j;\theta_{m}), & i=0,\\[0.2em]
             (\chi^{\varepsilon} \circ \rho)(t_n,X_{t_n}^{1,j};\theta_{m}), & i=1,\\[0.2em]
            1-  (\chi^{\varepsilon} \circ \rho)(t_n,X_{t_n}^{2,j};\theta_{m}), & i=2 \\
        \end{cases}$}
       \item   \textbf{Symmetric Differences:} $|\Gamma^{\theta_m}_{t_n} \triangle \Gamma^{\theta_m}_{t_{n-1}}|  \approx \frac{|\Omega|}{J}\sum\limits_{j=1}^{J} |q_{n,0,j}^{\theta_m, \varepsilon} -  q_{n-1,0,j}^{\theta_m, \varepsilon}|  $\\[-0.2em] 
                 \item   \textbf{Integrals:} $\int_{\Gamma^{\theta_m}_{t_n}} \psi \ \approx \  \frac{|\Omega|}{J}\sum\limits_{j=1}^{J} q_{n,0,j}^{\theta_m, \varepsilon}\ \psi(U^j)$\\
                 [-0.2em]
               
            \item \textbf{Stopping probabilities: $ Q_{n,i,j}^{\theta_m,\varepsilon} = q_{n,i,j}^{\theta_{m},\varepsilon} ( 1 - \sum\limits_{l<n} Q_{l,i,j}^{\theta_{m},\varepsilon} ), \; i=1,2$}   \\[-0.2em]
            
            \item \textbf{Stopped values: } \text{$\psi(X^{i,j}_{\tau^{\theta_m}})\,\mathds{1}_{\{\tau^{\theta_m}   \le t_n\}} \approx \sum\limits_{l=1}^n Q_{l,i,j}^{\theta_m,\varepsilon}\psi(X_{t_l}^{i,j})$}\\[-0.4em]
  
        \end{itemize}
\item  \textbf{Loss}: $\calL(\theta_m) = \sum\limits_{\psi\in \Psi}\sum\limits_{n=1}^N\ell_n(\theta_m,\psi)^2$, with 
\begin{align*}
    {\scriptsize \ell_n(\theta_m,\psi)} &=   {\scriptsize \int_{ \Gamma_{0-}} \! \psi   - \int_{\Gamma^{\theta_m}_{t_n}} \! \psi}   \\
    &- {\scriptsize \ \frac{1}{JL}\sum_{j=1}^J\Big(\eta \ \psi(X^{1,j}_{\tau^{1,\theta_m,j}})\,\mathds{1}_{\{\tau^{1,\theta_m,j}  \le t_n\}} - \ \psi(X^{2,j}_{\tau^{2,\theta_m,j}})\,\mathds{1}_{\{\tau^{2,\theta_m,j}  \le t_n\}}\Big)} 
\end{align*}

\item \textbf{Jump penalty:} \text{ $\ \calP(\theta_m) =  \calR \Big(\max\limits_{n=1,\ldots,N} |\Gamma^{\theta_m}_{t_n} \triangle \Gamma^{\theta_m}_{t_{n-1}}| - C \Big)$} \\[0.1em]
\item \textbf{Gradient step:} $\theta_{m+1} = \theta_{m} - \zeta_m \nabla_\theta \underline{\calL}(\theta_m) $, $\; \underline{\calL}(\theta_m) = \calL(\theta_m) + \lambda_0 \ \lambda_m^{\text{\tiny scale}} \calP(\theta_m) $
   \end{enumerate}
\item[III.] \textbf{Return} $\Gamma^{\theta_M} = \{\Phi(\cdot \ ; \theta_{M}) \le 0\}$\\[-1.5em]
\end{itemize}
\end{algorithm}

\subsection{Adding surface tension} \label{sec:MethodTension}

\subsubsection{Growth Condition Revisited }
The numerical verification of the growth condition given in \cref{def:probSolTension} is not straightforward because of the additional term $\calK_t$. It turns out that  an  alternative formulation, which we now outline, greatly simplifies the implementation. In what follows, we assume  that $d\ge 2$. 
For $\delta > 0$, define the one-sided mushy regions,
$$\partial \Gamma_t^{\delta, i}  = \{x\in \Gamma_t^i : \, dist(x,\partial \Gamma_t)\le \delta \}, \quad i=1,2. $$ 

Let $(T_l^{\delta,i})_{l\ge 1}$ be the arrival times of  a time-space Poisson point process with intensity
\begin{equation}\label{eq:PoissonIntense}
     \lambda^i(t,y) = \frac{\gamma}{\delta} |\kappa_{\partial\Gamma_t}(\widehat{y})|^{2-d} \frac{\mathds{1}_{\partial \Gamma_t^{\delta,i}}(y)}{|\partial \Gamma_t^{\delta,i}|},\quad \widehat{y} = \text{proj}_{\partial \Gamma_t}(y), \quad d\ge 2,
\end{equation}
where $\text{proj}_{\partial \Gamma_t}(y)$ is the projection of $y$ onto $\partial \Gamma_t$, i.e., the point on $\partial \Gamma_t$ closest to $y$. For each $l\ge 1$ and $i=1,2$, we simulate a Brownian particle $Y^{l,i} = (Y^{l,i}_s)_{s\in[T_l^{\delta,i},T]}$ where, according to \eqref{eq:PoissonIntense}, the initial position of $Y^{l,i}$ is uniformly distributed in $\partial \Gamma_{T^{\delta,i}_l}^{\delta,i}$ if $d= 2$ and inversely proportional to the absolute mean curvature of $\partial \Gamma_{T^{\delta,i}_l}$ otherwise.  
We also define the exit times $\tau_l^{\delta,i} = \inf\{s \ge T^{\delta,i}_l \! : Y^{l,i}_s \notin \Gamma_s^i \}$. 

In the three-dimensional, radially symmetric case (namely $\Gamma_t = B_{r(t)} \subseteq \R^3$), the Poisson intensities simply read
\begin{equation}\label{eq:PoissonIntenseRad}
     {\lambda^i(t,\cdot) = \frac{\gamma}{\delta}} r(t) \frac{\mathds{1}_{A_{r(t),\Delta_i}}}{|A_{r(t),\Delta_i}|}, \quad   \Delta_i = (-1)^{i+1} \delta,
\end{equation}
with the annuli $A_{r,\Delta}$ defined in \eqref{eq:annuli}. We therefore gather that the expected number of simulated particles is directly proportional to the radius of the solid region. 


\begin{proposition}\label{prop:growthCondTensionPoisson}
Assume that $\alpha_1 = \alpha_2$ and consider a classical solution of the radially symmetric Stefan problem with surface tension. Moreover, let $N_t^{\delta,i} := \sum_{l=1}^{\infty} \mathds{1}_{\{T^{\delta,i}_l \le t\}}$ be the counting process associated with $(T^{\delta,i}_l)$, $i=1,2$.  Then for all $ \psi \in \mathcal{C}_c^{\infty}(\Omega)$, 
\begin{align}
   &   \int_{\Gamma_{0-}}\psi -  \int_{\Gamma_t  } \psi
   =  \frac{1}{L} \Big( \eta \E^{\mu^1}[ \psi(X_{\tau^1}^1)\,\mathds{1}_{\{ \tau^1 \le t\}}] - \E^{\mu^2}[  \psi(X_{\tau^2}^2)\,\mathds{1}_{\{ \tau^2 \le t\}}] + \calK_{t}   \Big),\label{eq:probacondTension2}  \\[1em]  
&  \calK_{t} = - \calK_t^1 - \calK_t^2, \\[1em]
&  \calK_t^i = \lim_{\delta \downarrow 0} \  \E\Bigg[\sum_{l = 1}^{N_t^{\delta,i} } \Big( \psi(Y_{\tau_l^{\delta,i}}^{l,i}) \,\mathds{1}_{\{\tau^{\delta, i}_l \le t\}} \ {- \  \psi(Y_{T^{\delta,i}_l}^{l,i})}   \Big)\Bigg]. \label{eq:curvatureTerm}
\end{align}
\end{proposition}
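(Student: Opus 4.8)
The plan is to build on \cref{prop:GrowthCondTension}, which already yields the growth condition \eqref{eq:probacondTension2} with the curvature term $\calK_t$ in the form \eqref{eq:curvTerm}. Since $\alpha_1=\alpha_2=:\alpha$, it then suffices to prove that the Poisson expression matches the $K^i$-representation phase by phase, namely
\[
\calK_t^i=(-1)^{i+1}\,\frac{\gamma\alpha}{2}\int_0^t\!\int_{\partial\Gamma_s}\partial_\nu K^i\,\psi,\qquad i=1,2,
\]
so that $-\calK_t^1-\calK_t^2$ reproduces $\gamma\int_0^t\int_{\partial\Gamma_s}(\tfrac{\alpha}{2}\partial_\nu K^2-\tfrac{\alpha}{2}\partial_\nu K^1)\psi$. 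The alternating sign $(-1)^{i+1}$ is forced by the geometry: the one-sided mushy shells $\partial\Gamma_s^{\delta,1}$ and $\partial\Gamma_s^{\delta,2}$ lie on opposite sides of the interface, while $\nu$ is held fixed as the outward normal of $\Gamma_s$.

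First I would strip the randomness of the point process. By the marking theorem together with Campbell's (Mecke's) formula, and using that each particle evolves independently of the configuration once its birth time and location are fixed,
\[
\calK_t^i=\lim_{\delta\downarrow0}\int_0^t\!\int_{\partial\Gamma_s^{\delta,i}}\lambda^i(s,y)\,g_\delta^i(s,y)\,\mathrm{d}y\,\mathrm{d}s,\qquad g_\delta^i(s,y):=\E_{s,y}\big[\psi(Y_\tau)\,\mathds{1}_{\{\tau\le t\}}-\psi(y)\big],
\]
where $\E_{s,y}$ denotes expectation over a reflected Brownian motion $Y$ with diffusivity $\alpha$ started at $y$ at time $s$, stopped at its exit time $\tau$ from the moving phase $\Gamma^i$. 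Writing $h^i(s,y)=\E_{s,y}[\psi(Y_\tau)\mathds{1}_{\{\tau\le t\}}]$, the increment $g_\delta^i=h^i-\psi$ equals the solution $w^i$ of the backward heat equation with source $-\tfrac{\alpha}{2}\Delta\psi$ that vanishes on the lateral boundary $\partial\Gamma$; hence $g_\delta^i(s,y)=\rho\,\partial w^i(s,\widehat y)+o(\rho)$, with $\rho=\mathrm{dist}(y,\partial\Gamma_s)$, $\widehat y=\mathrm{proj}_{\partial\Gamma_s}(y)$, and $\partial$ the inward normal derivative of $\Gamma_s^i$ (which accounts for the sign above).

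The heart of the argument, and the step I expect to be hardest, is the $\delta\downarrow0$ limit. Parametrising the thin shell $\partial\Gamma_s^{\delta,i}$ by $(\widehat y,\rho)\in\partial\Gamma_s\times(0,\delta)$, the prefactor $\tfrac{\gamma}{\delta}$, the normalisation $|\partial\Gamma_s^{\delta,i}|^{-1}$, the curvature weight $|\kappa_{\partial\Gamma_s}|^{2-d}$ in \eqref{eq:PoissonIntense}, and the linear law $g_\delta^i\sim\rho\,\partial w^i$ must be shown to combine, after the $\rho$-integration, into a finite surface integral over $\partial\Gamma_s$ as $\delta\downarrow0$. To identify this surface integral with $\partial_\nu K^i$ tested against $\psi$, I would invoke a Green (duality) identity for the heat operator on the space-time domain of phase $i$: since $K^i$ is caloric with lateral datum $\kappa_{\partial\Gamma}$ and $h^i$ is caloric with lateral datum $\psi$, integration by parts exchanges $\int_{\partial\Gamma_s}\partial_\nu K^i\,\psi$ for a $\kappa_{\partial\Gamma_s}$-weighted normal flux of $h^i$, up to initial and terminal terms that vanish over the relevant range of $s$. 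This is precisely the curvature-weighted flux that the Poisson sampling reconstructs, and the exponent $2-d$ in the intensity is exactly the calibration that makes the shell geometry (surface measure $\sim\delta\,|\partial\Gamma_s|$ together with the $\rho$-moment) produce the single curvature factor demanded by the duality.

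Given that the proposition is stated only in the radially symmetric case with $\alpha_1=\alpha_2$, the cleanest route to rigour is to carry out the last step by radial reduction rather than abstractly: all quantities depend only on $(s,r)$, the values $\kappa_{\partial\Gamma_s}=1/r(s)$ and $|\kappa_{\partial\Gamma_s}|^{2-d}=r(s)^{d-2}$ are constant on each sphere, the shells become the annuli $A_{r(s),\Delta_i}$ of \eqref{eq:PoissonIntenseRad}, and both $g_\delta^i$ and $K^i$ solve one-dimensional radial heat equations whose boundary-layer expansions can be computed explicitly. The remaining technical points are routine: particles born too close to the terminal time $t$ may fail to exit by $t$, but their contribution is $O(\delta)$ and vanishes in the limit; and a uniform bound on $g_\delta^i/\rho$ near $\partial\Gamma$ justifies dominated convergence when passing the limit under $\int_0^t$.
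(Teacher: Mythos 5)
Your reduction to the phase-by-phase identity $\calK_t^i=(-1)^{i+1}\frac{\gamma\alpha}{2}\int_0^t\int_{\partial\Gamma_s}\partial_\nu K^i\,\psi$ is where the argument breaks: that identity is false, and the failure occurs precisely at the duality step you dismiss as ``initial and terminal terms that vanish.'' Because the phase domains are non-cylindrical, the space-time Green identity (Reynolds transport applied to $s\mapsto\int_{\Gamma_s^i}K^i h^i$) produces a lateral term involving the interface velocity $V$, and exchanging $\partial_\nu K^i\,\psi$ for $\kappa_{\partial\Gamma_s}\,\partial_\nu h^i$ leaves a further term in $\partial_\nu\psi$. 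Redoing the per-phase computation from the proof of \cref{prop:GrowthCondTension} (with $\alpha_1=\alpha_2=\alpha$), together with the volume-form identity $\calK_t^i=-\gamma\int_{\Gamma_t^i}K^i(t)\,\psi+\frac{\gamma\alpha}{2}\int_0^t\int_{\Gamma_s^i}K^i\,\Delta\psi$, one finds
\begin{equation*}
\calK_t^i=(-1)^{i+1}\,\gamma\int_0^t\bigg[\frac{\alpha}{2}\int_{\partial\Gamma_s}\partial_\nu K^i\,\psi\;-\;\frac{\alpha}{2}\int_{\partial\Gamma_s}\kappa_{\partial\Gamma_s}\,\partial_\nu\psi\;+\;\int_{\partial\Gamma_s}V\,\kappa_{\partial\Gamma_s}\,\psi\bigg]\,\mathrm{d}s.
\end{equation*}
The two extra terms are phase-independent and cancel only in the signed sum $-\calK_t^1-\calK_t^2$ (they enter with opposite prefactors $(-1)^{i+1}$); per phase they are generically nonzero --- in the radial case they amount to $\pm\gamma\int_0^t \frac{1}{r(s)}\big(\dot r(s)\int_{\partial\Gamma_s}\psi-\frac{\alpha}{2}\int_{\partial\Gamma_s}\partial_\nu\psi\big)\,\mathrm{d}s$, which has no reason to vanish. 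So the intermediate claim you set out to prove for each $i$ separately is wrong; the plan is repairable by targeting only the summed identity, but then the $V$- and $\partial_\nu\psi$-bookkeeping above must be carried out rather than assumed away.

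Beyond this sign/term issue, your route differs structurally from the paper's, and the hardest step is left unproved in your sketch. The paper never passes through $K^i$ or \cref{prop:GrowthCondTension}: it starts from the time-integrated analogue \eqref{eq:growthTensionproof1} of \eqref{eq:intermediateGrowthCond}, invokes the stochastic representation \eqref{eq:intu^i} of the temperature itself --- including the boundary-particle occupation term, imported wholesale from \cite[Lemma 2.4]{MishaRadialTension} --- and then only applies It\^o's formula and Fubini to each $X^i$ and $Y^{l,i}$ with $\varphi\in\{\psi,\Delta\psi\}$. In other words, the $\delta\downarrow 0$ convergence of the Poisson cloud, which in your outline is the Mecke/boundary-layer/calibration step (where the matching of the constants hidden in the weight $|\kappa|^{2-d}$ and the normalization $|\partial\Gamma_s^{\delta,i}|^{-1}$ is asserted, not computed), is exactly what the cited lemma provides; your proposal would have to re-derive that lemma from scratch in addition to fixing the duality identity.
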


\begin{proof}
See \cref{app:growthCondTensionPoisson}. 
\end{proof}

\begin{remark}
    When the diffusivities of the solid and the liquid phase are different ($\alpha_1 \ne \alpha_2$), the right-hand side of the growth condition \eqref{eq:probacondTension2} contains an additional term, namely 
$$\widetilde{\calK}_t  =  - \gamma\frac{\alpha_2-\alpha_1}{2} \int_0^t\int_{\partial \Gamma_s} \kappa_{\partial \Gamma_s} \  \partial_{\nu} \psi.$$ 
Recalling that $u^1=u^2 = - \gamma \kappa_{\partial \Gamma_s} $, the above term can be approximated via
$$\widetilde{\calK}_t \approx    \frac{\alpha_2}{2\delta } \int_0^t\int_{\partial \Gamma_s^{\delta,2}} u^2 \  \partial_{\nu} \psi - \frac{\alpha_1}{2\delta } \int_0^t\int_{\partial \Gamma_s^{\delta,1}} u^1 \  \partial_{\nu} \psi.$$ 
The integrals $\int_{\partial \Gamma_s^{\delta,i}} u^i \  \partial_{\nu} \psi$, in turn, can be computed using the stochastic representation of $u^i$ given in \eqref{eq:intu^i}, below. Note that $\partial_{\nu} \psi$ is easy to compute since the outward normal $\nu$ is available from the deep level-set function. 
\end{remark}

\begin{remark} \label{rem:curvconjecture}
It is conjectured that the representation \eqref{eq:curvatureTerm} of the curvature terms holds beyond the radial case. The only difference is that the effect of boundary particles is reversed in \textit{concave} sections of the boundary. Specifically, we expect that, in general,
\begin{equation}
    \calK_t^i = \lim_{\delta \downarrow 0} \  \E\Bigg[\sum_{l = 1}^{N_t^{\delta,i} } \text{sign}\big(\kappa_{\partial \Gamma_{ \! T_l^{\delta,i}}}(\widehat{Y_{T^{\delta,i}_l}^{l,i}})\big)\big( \psi(Y_{\tau_l^{\delta,i}}^{l,i})\,\mathds{1}_{\{\tau_l^{\delta,i} \le t\}} \ - \  \psi(Y_{T^{\delta,i}_l}^{l,i})   \big)\Bigg].\label{eq:curvatureTerm2}
\end{equation}
\end{remark}
 We show how to approximate $\calK_t^i$ of \eqref{eq:curvatureTerm2}. First, the mushy regions can be expressed in terms of  the level-set function as follows:  $$\partial \Gamma_t^{\delta,1} \approx \left \{x\in \Omega :\, \frac{\Phi(t,x)}{|\nabla_x \Phi(t,x)|} \in (0,\delta] \right\}, \quad \partial \Gamma_t^{\delta,2} \approx \left \{x\in \Omega  : \, \frac{\Phi(t,x)}{|\nabla_x \Phi(t,x)|} \in [-\delta,0] \right\}.$$ 
Without loss of generality, we focus on the term $\calK_t^1$ and drop the superscripts $^1$ throughout. 
Using the time discretization  $\{t_n = n\Delta t \! :  n=0,\ldots,N\}$, $\Delta t = \frac{T}{N}$, $N\in \N$, we obtain for $\delta>0$ sufficiently small that   
$$\calK_{t_n} \approx \ \sum_{m=0}^{n-1} \E\Bigg[\sum_{l =N^{\delta}_{t_{m}}{+1}}^{N^{\delta}_{t_{m+1}}}  \E\Big [\text{sign}\big(\kappa_{\partial \Gamma_{ \! t_m}}(\widehat{Y_{t_m}^l})\big)\big(\psi(Y_{\tau_{l}^{\delta}}^{l})\,\mathds{1}_{\{\tau_{l}^{\delta} \le t_n\}}  - \psi(Y_{t_m}^{l})\big) \ \Big| \ T^{\delta}_l = t_m \Big]  \Bigg].$$ 
Writing $\E_{t_m,y}[\cdot ] = \E[ \ \cdot \ | \ T^{\delta}_l = t_m,\ Y^l_{t_m}  = y]$ and recalling the Poisson intensity in \eqref{eq:PoissonIntense} we find that 
\begin{align}
 & \,\calK_{t_n} \approx \ \sum_{m=0}^{n-1} \frac{\gamma \Delta t }{\delta\ |\partial \Gamma_{t_m}^{\delta}|} \int_{\partial \Gamma_{t_m}^{\delta}} |\kappa_{\partial\Gamma_{t_m}}(\widehat{y})|^{2-d} \E_{t_m,y}\big[\text{sign}\big(\kappa_{\partial \Gamma_{ \! t_m}}(\widehat{y})\big)\big(\psi(Y_{\tau^{\delta}})\,\mathds{1}_{\{\tau^{\delta} \le t_n\}}  - \psi(y)\big)\big]\,\mathrm{d}y \nonumber \\
& = \ \frac{\gamma \Delta t }{\delta}   \sum_{m=0}^{n-1}  \E\Big[\ \text{sign}\big(\kappa_{\partial\Gamma_{t_m}}(\widehat{Y_{t_m}})\big)\big|\kappa_{\partial\Gamma_{t_m}}(\widehat{Y_{t_m}})\big|^{2-d}\E\big[\psi(Y_{\tau^{\delta}})\, \mathds{1}_{\{\tau^{\delta} \le t_n\}}{ - \psi(Y_{t_m})} \ \big| \  T^{\delta} = t_m,\ Y_{t_m}\big]\Big]. \label{eq:curvCompute}
\end{align}
The latter is estimated through Monte Carlo simulation where, for simplicity, the curvature is evaluated at $Y_{t_m}$ instead of its projection $\widehat{Y_{t_m}}$ onto $\partial\Gamma_{t_m}$. This is an accurate approximation of the ``true'' curvature when the width $\delta$ of the mushy region is small. What remains is to approximate the mean curvature, which we address in the next section. 

\subsubsection{Mean Curvature  Approximation}\label{sec:trickradial}
In this section, we present simple algorithms to locally approximate the mean curvature of a manifold $\Sigma  \subset \R^d$, $d=2,3$, of codimension $1$. Other techniques, e.g., using finite differences, are discussed in \cite{Popinet}. If $\Sigma = \{\phi = 0\}$ for some level-set function $\phi \in \calC^2(\R^d)$, the mean curvature  can be expressed as
\begin{equation}
    \kappa_{\Sigma}(y) = \frac{\text{div}\left(\nu(y)\right)}{d-1}, \quad y\in \Sigma, 
\end{equation} 
with the outward normal vector field $\nu = \frac{\nabla \phi}{|\nabla \phi|} $. If $\phi$ is parameterized by a neural network, one could apply automatic differentiation to compute $\kappa_{\Sigma}$. But computing second order  derivatives through automatic differentiation is  costly since it entails nested gradient tapes. Indeed, the machine would need to  keep track of the first order derivatives as well. Alternative approaches have been proposed to speed up computation,  e.g., using Monte Carlo techniques \cite{DGM}.  
Here, we propose a dilation approach echoing the geometric nature of curvature and solely exploiting  the gradient of $\phi$. 

\begin{figure}[h]
\caption{Dilation technique to capture the mean curvature of codimension $1$ interfaces in $\R^d$, $d=2,3$. }
\vspace{-2mm}
\begin{subfigure}[b]{0.49\textwidth}
    \centering
     \caption{$d=2$} \includegraphics[height=1.7in,width=2.2in]{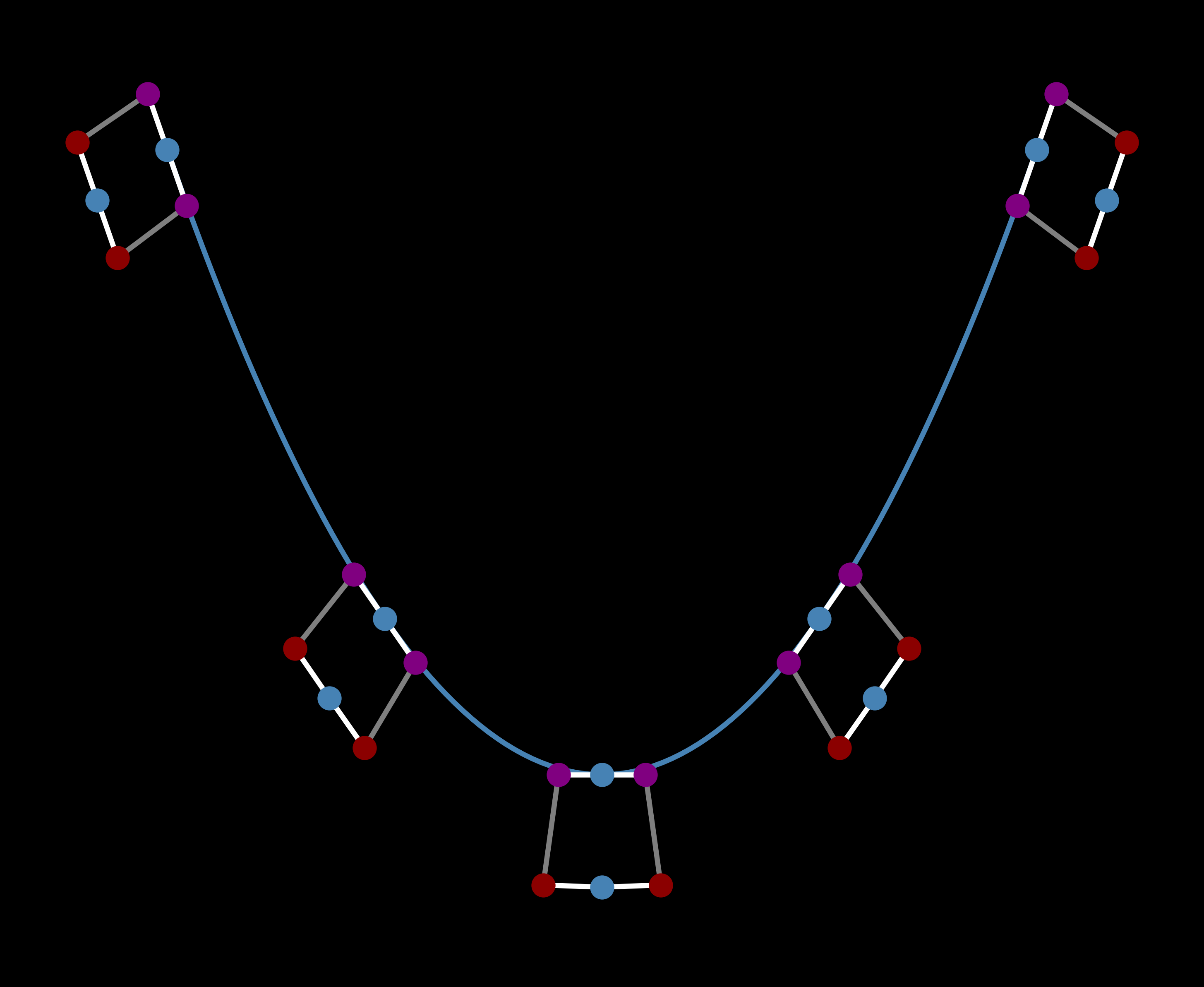}
    \label{fig:curvApprox2D}
\end{subfigure}
\begin{subfigure}[b]{0.49\textwidth}
    \centering
    \caption{$d=3$} \includegraphics[height=1.7in,width=2.2in]{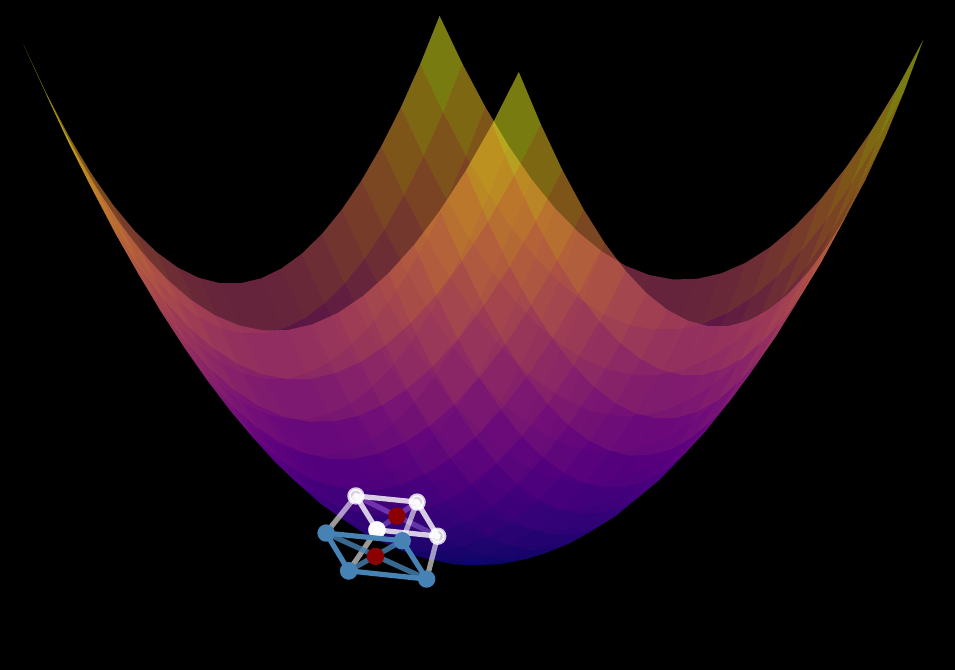}
    \label{fig:curvApprox3D}
\end{subfigure}
\label{fig:curvApprox}
\end{figure}

As a motivation, consider a circle $\Sigma_r = \partial B_r \subseteq \R^2$  of radius $r$ whose curvature is $\kappa_{\Sigma_r} = \frac{1}{r}$. One way to approximate the curvature is to look at the ratio between  arc lengths of $\Sigma_r$   and the dilated circle $\Sigma_{r+\epsilon}$ for small $\epsilon > 0$. Indeed, writing
$\Sigma_r \supset \calA_{r,\epsilon_0} = \{(r\cos(t\epsilon_0),r\sin(t\epsilon_0)) \! : t \in [0,1]\}$ for an arc with radius $r$  and angle $\epsilon_0 \in [0,2\pi)$ we see that
\begin{equation}\label{eq:curvArc}
\frac{\calH^1(\calA_{r+\epsilon,\epsilon_0})}{\calH^1(\calA_{r,\epsilon_0})} = \frac{\epsilon_0(r+\epsilon)}{\epsilon_0 r} = 1 +\epsilon \ \kappa_{\Sigma_r} \; \Longrightarrow\; \kappa_{\Sigma_r} = \frac{1}{\epsilon}\bigg(\frac{\calH^1(\calA_{r+\epsilon,\epsilon_0})}{\calH^1(\calA_{r,\epsilon_0})} -1 \bigg).
\end{equation}

We can mimic this procedure for general subsets $\Sigma  \subset \R^2$, $\text{dim}_{\calH}(\Sigma) = 1$, by approximating $\Sigma$ locally by a circle. The procedure is summarized in \cref{alg:curv2D} where $\Sigma$ is  the zero level set of some function $\phi \in \calC^1(\R^2)$. An illustration is also given in \cref{fig:curvApprox2D}.  When $\epsilon_0 > 0$ is small, the arc of length $\epsilon_0$ in \eqref{eq:curvArc} is accurately replaced by a segment tangent to $\Sigma$. We recall that the outward normal vector field of $\Sigma$ is readily available from  the level-set function. The tangent vector field is therefore available as well. 

\begin{algorithm}[h]
\caption{(Local approximation of  curvature, $d=2$) }\label{alg:curv2D}

\vspace{1mm}
\textbf{Given}:  $\phi\in \calC^1(\Omega)$ $(\nabla \phi \ne 0)$,  $ \ \Sigma = \{\phi = 0 \} $, $ \ y \in \Sigma$, $ \ \epsilon_0>0 , \ \epsilon> 0$\\[-0.8em] 
\noindent\rule{\textwidth}{0.5pt} 
\begin{itemize}
 \setlength \itemsep{-0.1em}
 
\item[I.]  \textbf{Pick} a direction $\mu$ of the tangent line at $y$, i.e., perpendicular to $\nu(y) = \frac{\nabla \phi(y)}{|\nabla \phi(y)|}$ 
\item[II.] \textbf{Consider} \\[-2em]
\begin{enumerate}
    \item the segment $S_y \subseteq \R^2$ with endpoints $y^{\pm}= y \pm \epsilon_0 \mu$ 
    \item the dilated segment $S^{\epsilon}_y \subseteq \R^2$ with endpoints $y^{\pm} + \epsilon \  \nu(y^{\pm} )$
\end{enumerate}

\item[III.] \textbf{Compute} the ratio $\frac{\mathcal{H}^1(S^{\epsilon}_y)}{\mathcal{H}^1(S_y)} \approx 1 + \epsilon \ \kappa_{\Sigma}(y)$

\item[IV.] \textbf{Return} the curvature $\kappa_{\Sigma}(y) \approx \frac{1}{\epsilon}\Big(\frac{\mathcal{H}^1(S^{\epsilon}_y)}{\mathcal{H}^1(S_y)} - 1 \Big)$\\[-1em]
\end{itemize}

\end{algorithm}

A similar procedure can be used in the three-dimensional case, as explained in \cref{alg:curv3D} and displayed in \cref{fig:curvApprox3D}. In short, the segments in \cref{alg:curv2D} become quadrangles   and lengths are replaced by areas. 
In Step III of \cref{alg:curv3D}, note that the mean curvature  appears naturally from the cross products in $(1+\epsilon \kappa_{\Sigma,1})(1+\epsilon \kappa_{\Sigma,2})$, where $(\kappa_{\Sigma,i})_{i=1}^2$ are the principal curvatures of $\Sigma$. 
As the principal directions are unknown in general, the algorithm picks randomly two orthogonal vectors in the plane tangent to $\Sigma$ at some point $y\in \Sigma$. This randomization has little impact on the accuracy of the obtained, as can be seen in \cref{fig:curvParabol2D} and \cref{fig:curvParabol3D}. 

\begin{algorithm}[h]
\caption{(Local approximation of mean curvature, $d=3$) }\label{alg:curv3D} 
\vspace{1mm}
\textbf{Given}: $\phi\in \calC^1(\Omega)$ $(\nabla \phi \ne 0)$,  $ \ \Sigma = \{\phi = 0 \} $, $ \ y \in \Sigma$, $ \ \epsilon_0>0 , \ \epsilon> 0$\\[-0.8em] 
\noindent\rule{\textwidth}{0.5pt} 
\begin{itemize}
 \setlength \itemsep{-0.1em}
 
\item[I.] \textbf{Pick} two directions  $\mu_1 \perp \mu_2$ in the tangent plane at $y$, i.e., perpendicular to  $ \nu(y) = \frac{\nabla \phi(y)}{|\nabla \phi(y)|}$
\item[II.] \textbf{Consider} \\[-2em]
\begin{enumerate}
\item the quadrangle $Q_y$ with vertices  $y_i^{\pm} = y \pm \epsilon_0 \mu_i, \ i =1,2$ 

    \item  the quadrangle $Q^{\epsilon}_y$ with vertices $y_i^{\pm} + \epsilon \  \nu(y_i^{\pm} ),\ i =1,2$ 
\end{enumerate}

\item[III.] \textbf{Compute} the ratio $\frac{\mathcal{H}^2(Q^{\epsilon}_y)}{\mathcal{H}^2(Q_y)} \approx (1 + \epsilon \ \kappa_{\Sigma,1}(y))(1 + \epsilon \ \kappa_{\Sigma,2}(y)) \approx 1 + 2 \epsilon \ \kappa_{\Sigma}(y)$

\item[IV.] \textbf{Return} the  mean curvature $\kappa_{\Sigma}(y) \approx \frac{1}{2\epsilon}\Big(\frac{\mathcal{H}^2(Q^{\epsilon}_y)}{\mathcal{H}^2(Q_y)} - 1 \Big)$ \\[-1em]
\end{itemize}
\end{algorithm}

    Back to the Stefan problem, we  naturally choose $\Sigma = \partial \Gamma_t$, $t\in [0,T]$, and $\epsilon \ll \epsilon_0 \ll \delta$, where $\delta$ is the width of the fuzzy regions $\partial \Gamma_t^{\delta,1}$, $\partial \Gamma_t^{\delta,2}$. Indeed, $\epsilon \ll \epsilon_0$ is imposed to gain accuracy while $\epsilon_0 \ll \delta$ ensures that the points $y^{\pm}$ in \cref{alg:curv2D} ($y_i^{\pm}$ in  \cref{alg:curv3D}) belong to the fuzzy region for all $y\in \partial \Gamma_t$. 

Let us verify \cref{alg:curv2D,alg:curv3D}, respectively, 
for parabolas $\Sigma_a := \{y  \in \R^2 \! : y_2 = \frac{y_1^2}{a} \}$ and paraboloids $\Sigma_{a,b} := \{y  \in \R^3 \! : y_3 = \frac{y_1^2}{a} + \frac{y_2^2}{b} \}$ with parameters $a,b \in \R\setminus\{0\}$. The mean curvature functions are given, respectively, by
\begin{equation}\label{eq:curvParabolExact}
    \kappa_{\Sigma_a}(y) 
= 
    \frac{2}{a\left(1+ \frac{4y_1^2}{a^2}\right)^{3/2}}\quad  (d=2), \qquad \kappa_{\Sigma_{a,b}}(y) = \frac{\frac{4y_1^2}{a} +  \frac{4y_2^2}{b} + a + b}{ab\left(1+ \frac{4y_1^2}{a^2} +  \frac{4y_2^2}{b^2}\right)^{3/2}} \quad  (d=3).
\end{equation}
\cref{fig:curvParabol2D} shows the approximated curvature and the error relative to \eqref{eq:curvParabolExact} for a parabola $(d=2)$ with parameter $a=2$. In \cref{fig:curvParabol3D}, we repeat the exercise  for a  paraboloid with parameters $(a,b)=(1,2)$ (top panels) and a hyperbolic paraboloid with  $(a,b) =(-1,2)$ (bottom panels). As expected, we note that the error for the hyperbolic paraboloid in \cref{fig::errorParaboloid2} is only sizeable along the points with zero mean curvature. 
\begin{figure}[h]
\caption{Estimated  curvature (left panel) and relative error (right panel) for a parabola $(d=2)$ with parameter $a = 2$. }
\vspace{-2mm}

\begin{subfigure}[b]{0.49\textwidth}
    \centering
     \caption{Curvature} \includegraphics[height=1.7in,width=2.2in]{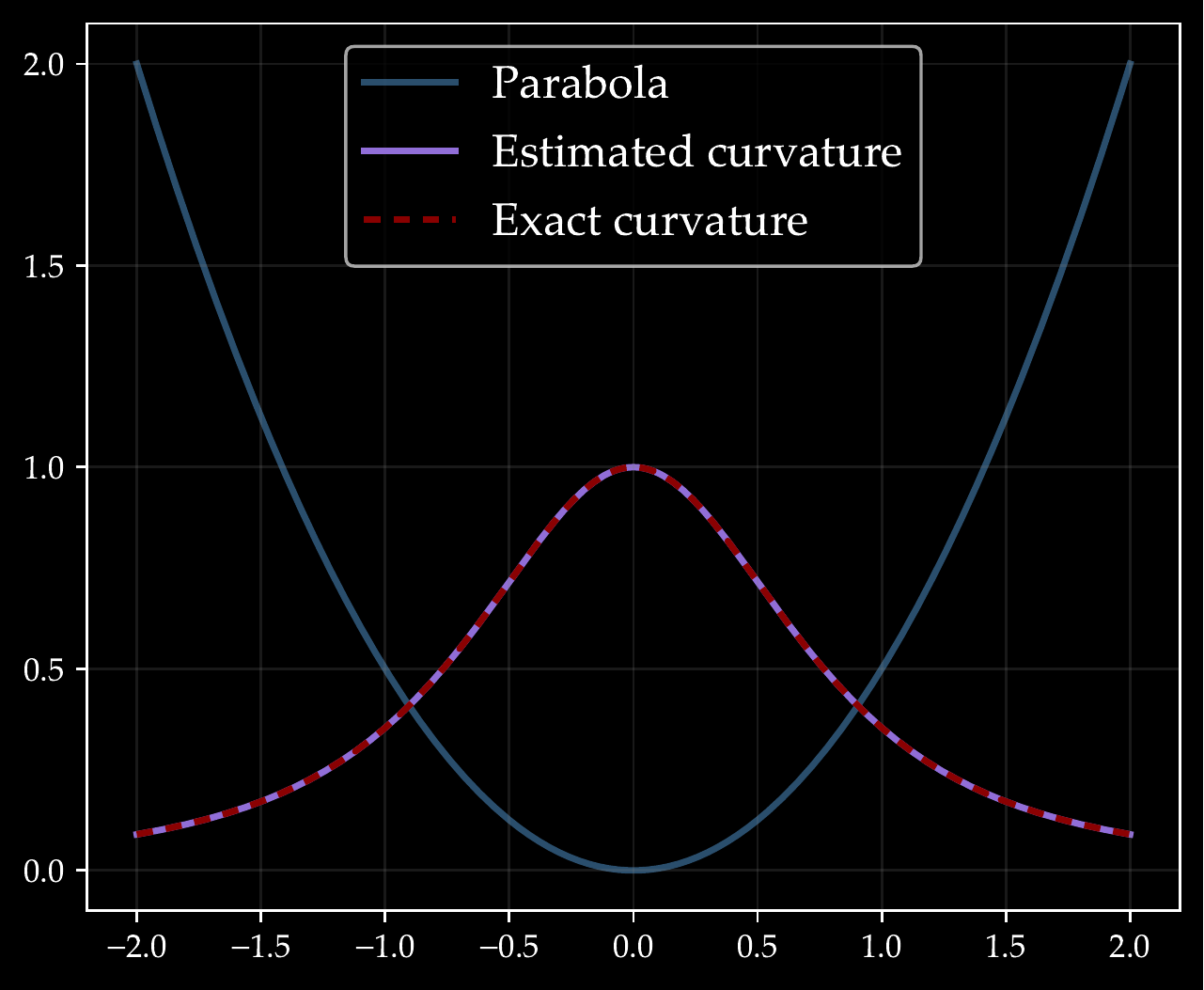}
    \label{fig:curvParaboloid1}
\end{subfigure}
\begin{subfigure}[b]{0.49\textwidth}
    \centering
    \caption{Relative error} \includegraphics[height=1.7in,width=2.2in]{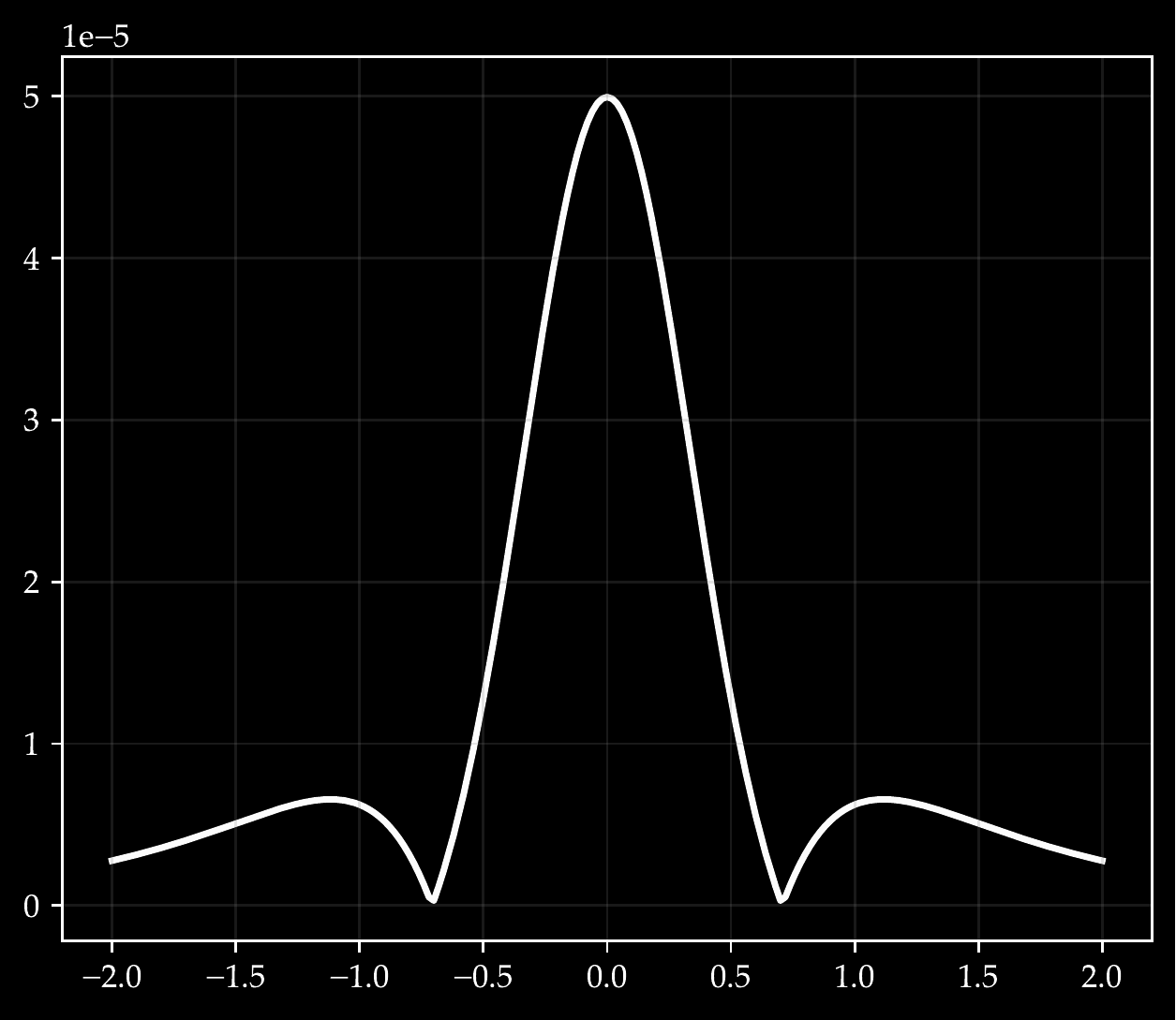}
    \label{fig::errorParaboloid1}
\end{subfigure}

\label{fig:curvParabol2D}
\end{figure}

\begin{figure}[H]
\caption{Estimated mean curvature (left panels) and relative error (right panels) for a paraboloid $(d=3)$ with parameters $(a,b) = (1,2)$ (top panels) and $(a,b) = (-1,2)$ (bottom panels). }
\vspace{-2mm}

\begin{subfigure}[b]{0.49\textwidth}
    \centering
     \caption{Mean Curvature, $(a,b) = (1,2)$} \includegraphics[height=1.7in,width=2.2in]{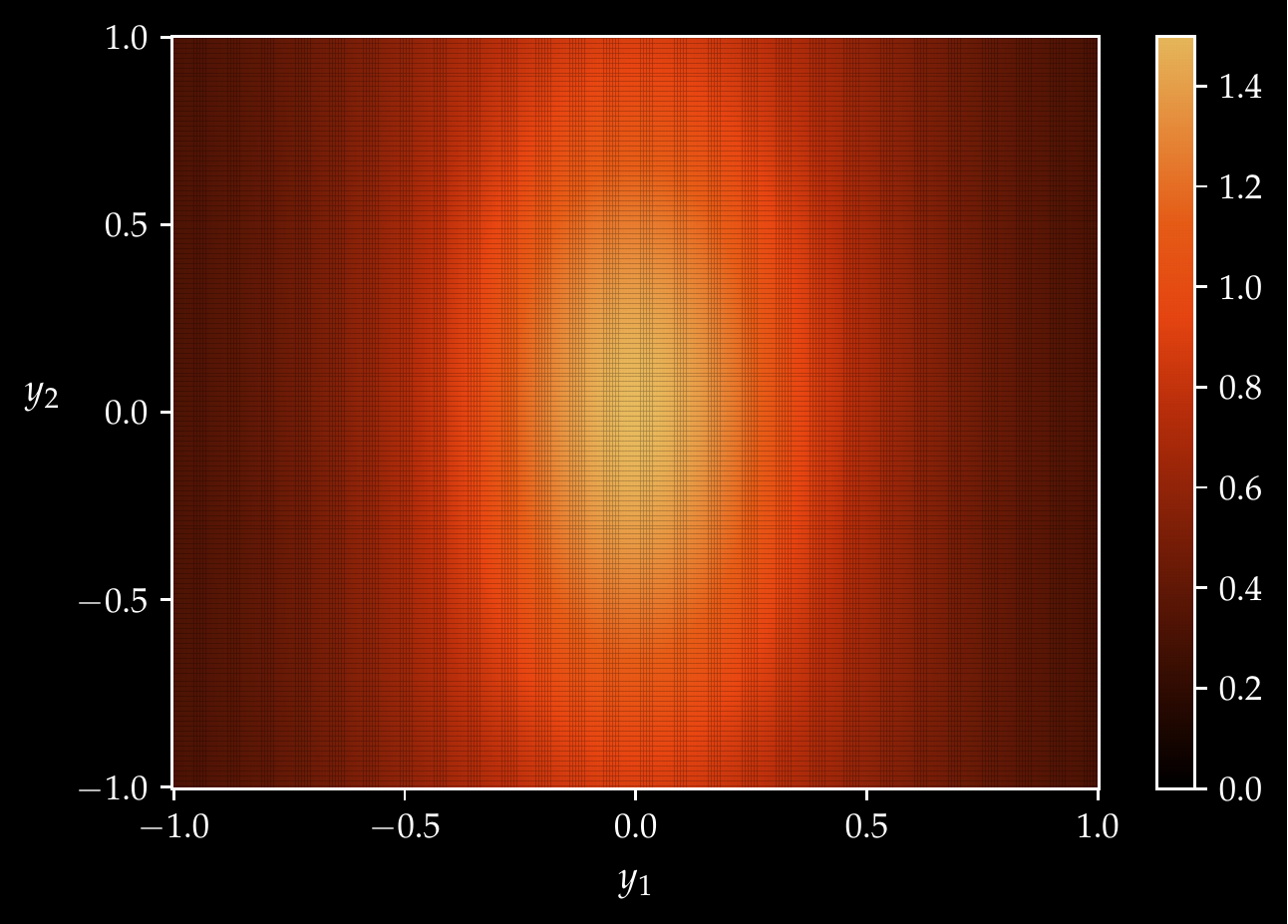}
    \label{fig:curvParaboloid1}
\end{subfigure}
\begin{subfigure}[b]{0.49\textwidth}
    \centering
    \caption{Relative error, $(a,b) = (1,2)$} \includegraphics[height=1.7in,width=2.2in]{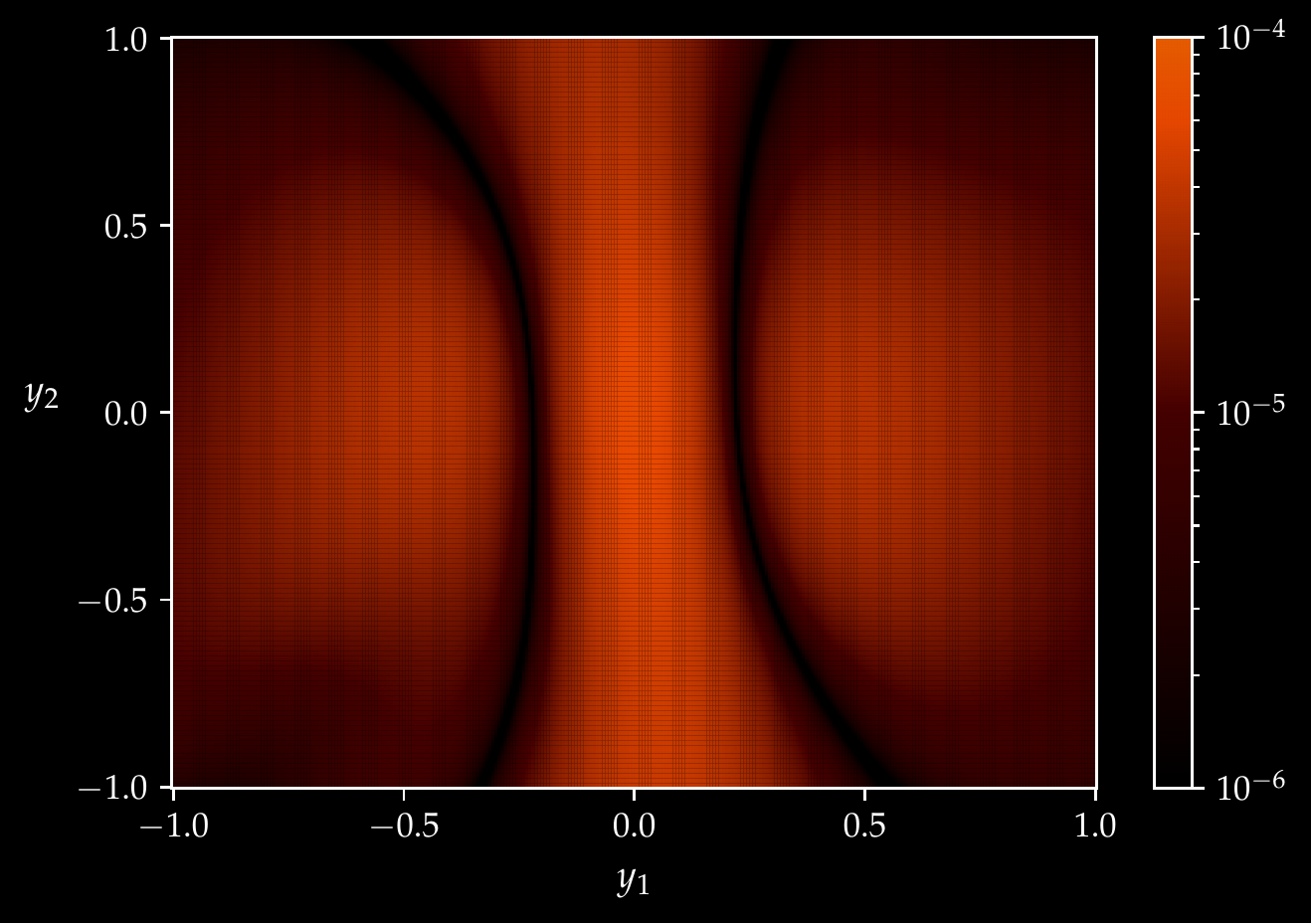}
    \label{fig::errorParaboloid1}
\end{subfigure}
\vspace{3mm}

\begin{subfigure}[b]{0.49\textwidth}
    \centering
     \caption{Mean Curvature, $(a,b) = (-1,2)$} \includegraphics[height=1.7in,width=2.2in]{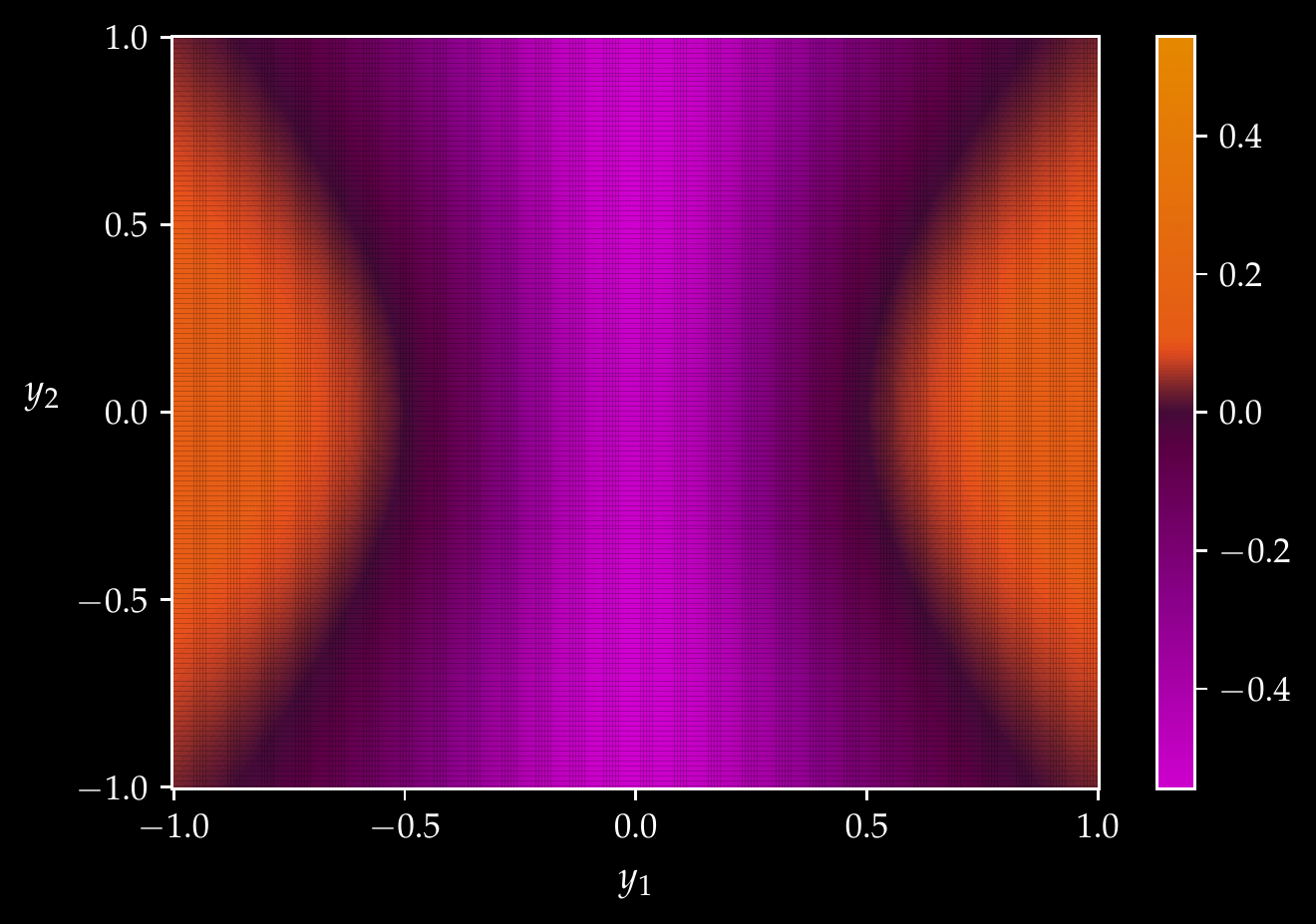}
    \label{fig:curvParaboloid2}
\end{subfigure}
\begin{subfigure}[b]{0.49\textwidth}
    \centering
    \caption{Relative error, $(a,b) = (-1,2)$} \includegraphics[height=1.7in,width=2.2in]{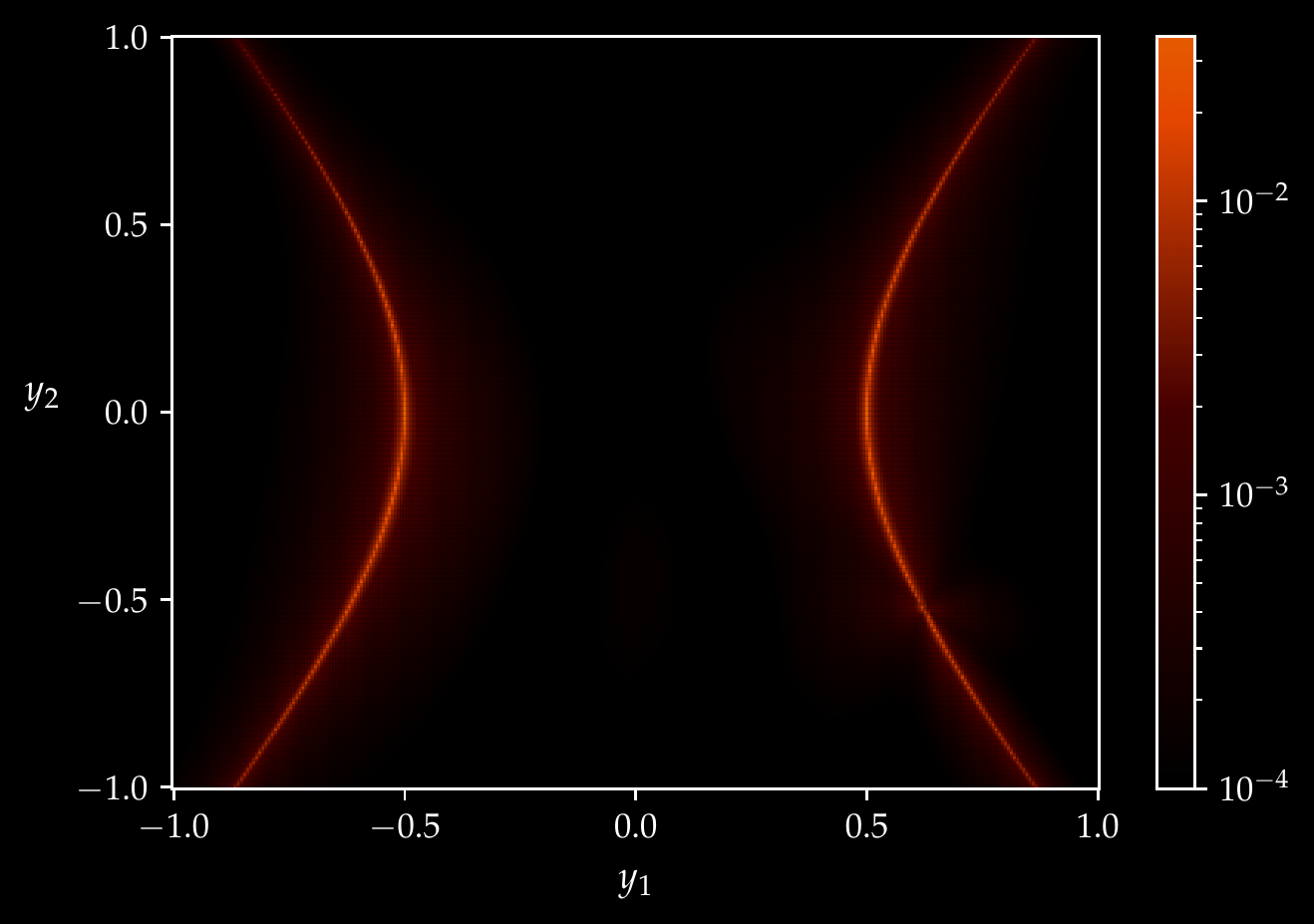}
    \label{fig::errorParaboloid2}
\end{subfigure}

\label{fig:curvParabol3D}
\end{figure}



\section{Numerical Results}\label{sec:Numerics}

\cref{tab:trainingParams} and \cref{tab:problemParams} report the default training and problem parameters, respectively, across the examples. The value of $M$ in \cref{tab:trainingParams} ($3000$) indicates the \textit{maximum} number of iterations if the loss function has not stabilized yet.  For the algorithm with surface tension, we set $M = 1000$ as each training iteration is more costly. Note that the Lagrange multiplier in front of the penalty function is  less than one to give more importance to the loss term than the penalty. The deep-level set function in 
\eqref{eq:deepLevelSet} consists of  a feedforward neural network with input $(t,x)$ of size $\bar{d} = d+1$ and two hidden layers with $20+\bar{d} \ $ hidden nodes. The code is implemented in 
Python 3.9 using Tensorflow 2.7 and run on CPU ($10$ cores)   
  on a 2021 Macbook pro with 64GB unified memory 
and Apple M1 Max chip. 
\begin{table}[H]
\begin{center}
\caption{Default Training Parameters ($d =$ number of space dimensions)}

\begin{tabular}{lll}
\hline \hline
Parameter & Definition & Value \\ \hline \hline
 $J$  & Batch size & $2^{7+d}$ \\ 
$M$ & Number of training iterations  & $3000$ \\ 
 $K$ & Number of test functions $(|\Psi|)$    & $100 d$ \\
 $N$ & Number of time steps  & $100$ \\ 
 $\varepsilon_i$ & Mushy region width of phase $i\in \{1,2\}$ &  $\sqrt{\alpha_i d   \frac{T}{N}}$ 
 \\
 $\lambda_0$ & Lagrange multiplier & $0.1$\\ 
 \hline\\[-1em]

\end{tabular}

\label{tab:trainingParams}
\end{center}
\end{table}

\begin{table}[H]
\begin{center}

\caption{Default problem parameters}
\begin{tabular}{lll}
\hline \hline
Parameter & Definition & Value \\ \hline \hline
 $T$  & Time horizon & $1$ \\ 
 $R$ & Radius of spherical domain $\Omega$    & $1$ 
 \\
  $\alpha_1$ & Diffusivity of liquid particles    & $0.5$ \\
   $\alpha_2$ & Diffusivity of solid particles    & $0.5$ \\
  $\gamma$ & Surface tension coefficient & $0$ \\
\hline \\[-1em]
\end{tabular}
\vspace{5pt}

\label{tab:problemParams}
\end{center}
\end{table}




\subsection{Radial Case}\label{sec:RadialNumerics}
We first discuss examples where the solid is radially symmetric. For concreteness, we regard the solid as an ice ball surrounded by (liquid) water.  

\subsubsection{One-phase, Melting Regime  \textnormal{$(d=2)$}}
\label{sec:Hadzic}
Consider the  one-phase  Stefan problem without surface tension as in \cref{ex:OnePhase}.   Given $\Gamma_{0-} = B_{1/2} \subset B_{1} = \Omega$, and assuming the initial temperature  $u_0^1$ to be radially symmetric, the solid (ice) remains radial as well, i.e., $\Gamma_t = B_{r(t)}$ for some càdlàg function $r\!:[0,T] \to [0,R]$.  
 We also set $\eta = 1$, and  $L = 1/4$. The temperature in the liquid (water) is initially constant, namely $u^1_0 \equiv \frac{1}{|B_R\setminus B_{r_0}|}$. 
 
\cref{fig:1Phasemahir} compares the  behavior of $r(t)$ as $t$ approaches the melting time with the  theoretical asymptotic rate given by \citet{Hadzic}, namely 
\begin{equation}\label{eq:meltRate2D1P}
    r(t) \  \sim \  \sqrt{\tau_{\varnothing}-t}\, e^{- \sqrt{ \frac{1}{2} |\log (\tau_{\varnothing}-t)|}}, \quad t \ \uparrow \ \tau_{\varnothing} := \inf\{t\ge 0:\,\Gamma_t = \varnothing\}. 
\end{equation}
As can be observed, the melting rate obtained with the deep level-set method is indeed close to the theoretical one. 




\begin{figure}[h]
    \centering
    \caption{One-phase Stefan problem in the radially symmetric case ($d=2$). Evolution of the solid and the liquid regions over time. }
    \includegraphics[height = 1.4in,width = 5.8in]{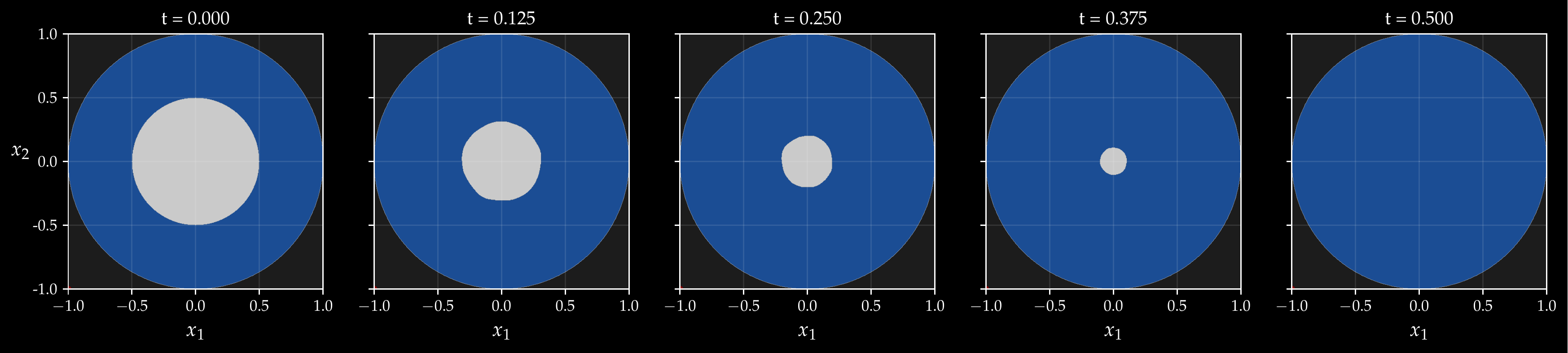}
    \label{fig:1PhaseMelting}
\end{figure}

\begin{figure}[h]
\caption{Mean radius of the solid region over time (left panel) and zoomed in on the melting time  (right panel). The light purple curves represent one standard deviation above and below the mean radius for various angles. The orange dashed curve is the theoretical  melting rate from  \cite{Hadzic}.} 
\vspace{-2mm}
\begin{subfigure}[b]{0.49\textwidth}
    \centering
 \includegraphics[height=1.8in,width=2.6in]{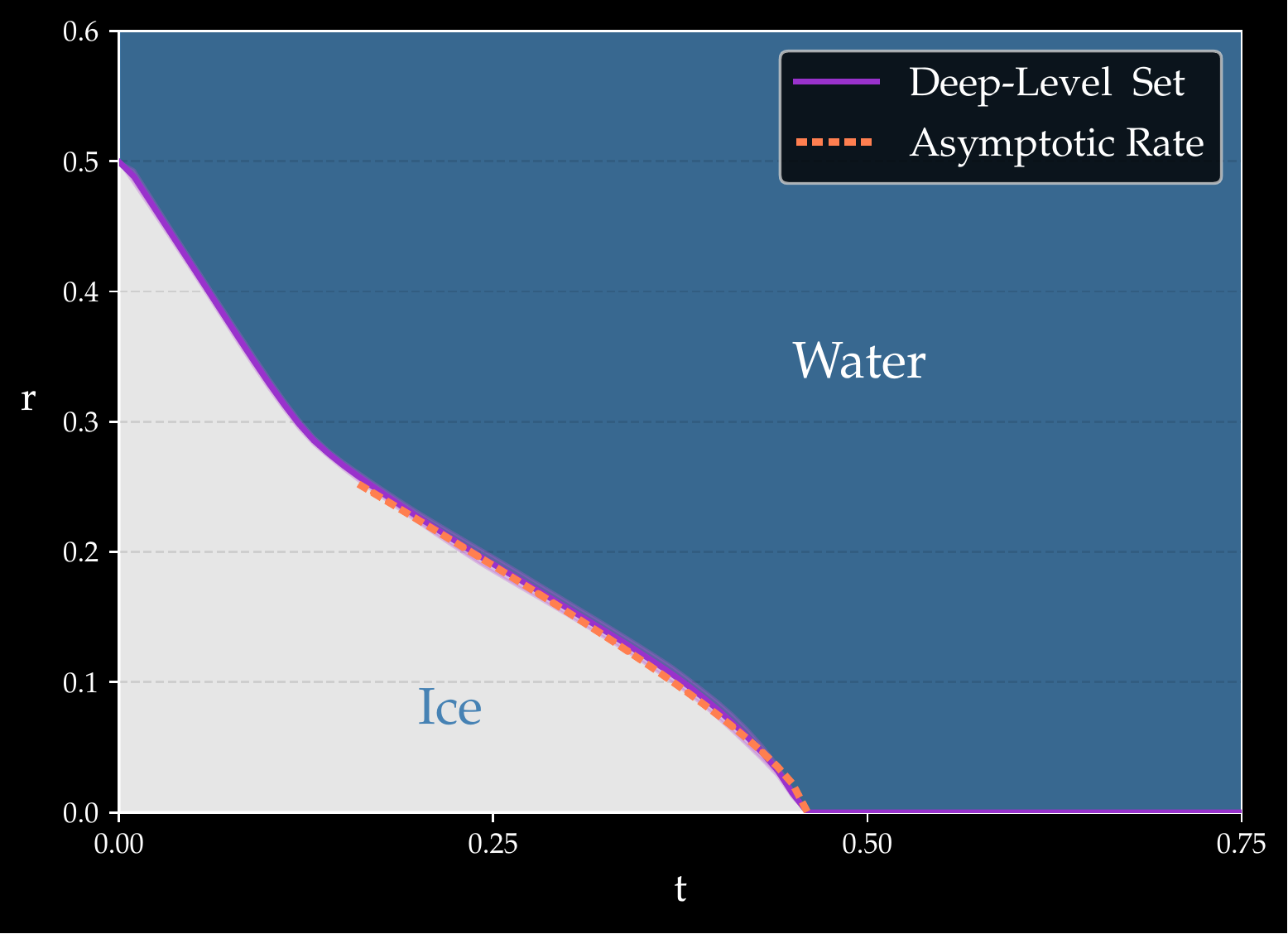}
    \label{fig:1Phasemahir}
\end{subfigure}
\begin{subfigure}[b]{0.49\textwidth}
    \centering
\includegraphics[height=1.8in,width=2.4in]{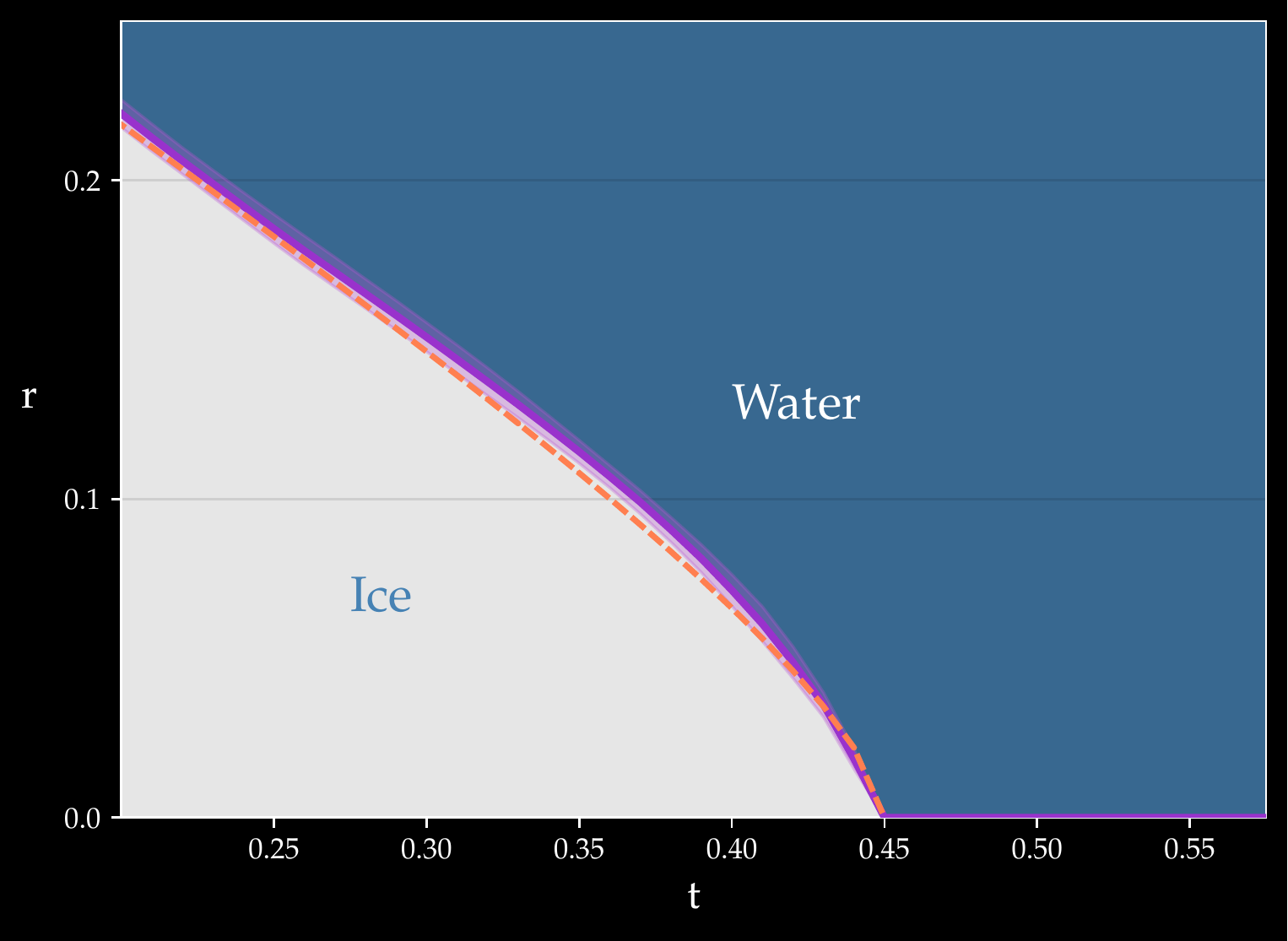}
    \label{fig:1PhasemahirZoom}
\end{subfigure}
\end{figure}

\subsubsection{Two-phase, Freezing Regime \textnormal{$(d=2)$}} \label{sec:longterm}
Let $d=2$ and consider the two-phase, radially symmetric supercooled Stefan problem without surface tension. Suppose that the temperature is initially constant in the liquid and the solid regions, namely 
$$u^1_0 = -\frac{\mathds{1}_{B_R\setminus B_{r_0}}}{2|B_R\setminus B_{r_0}|}, \qquad u^2_0 = -\frac{1}{10|B_{r_0}|}\mathds{1}_{B_{r_0}}. $$
We therefore choose the constants $c_1 = 1/2, \ c_2 = 1/10$ in \cref{rem:initConstants}. In this example, the radius of the solid region converges to some constant $r_{\infty} \in (0,R)$. In other words, the solid region neither melts completely nor covers the whole domain in the long run. In light of the Dirichlet condition \eqref{eq:2SPDirichlet}, the long-term temperature must be constant and equal to $u^1(\infty,\cdot)\equiv u^2(\infty,\cdot) \equiv 0$. In fact, the theoretical value for $r_{\infty}$ can be derived  from the Stefan growth condition. Indeed, choosing an increasing sequence $\psi_{m} \in \calC^{\infty}_c(\Omega)$ such that $\psi_{m} \uparrow 1$ in $\Omega$ and time integrating \eqref{eq:intermediateGrowthCond} in the proof of \cref{prop:GrowthCondNoTension} between $t=0-$ and $t=T\to \infty$ yields 
\begin{align}  
|\Gamma_{\infty}| -  |\Gamma_{0-}|  =  \frac{1}{L}\sum_{i=1}^2\bigg(\int_{\Gamma_{\infty}^i} u_{\infty}^i - \int_{\Gamma_{0-}^i} u_{0}^i \bigg) \; \Longleftrightarrow \; \pi r_{\infty}^2 - \pi r_0^2  = \frac{c_1 + c_2}{L}. 
\end{align}
The long-term radius is therefore $r_{\infty} = \sqrt{r_0^2 + \frac{c_1 + c_2}{L \pi}}$. 
We can thus solve for $r_{\infty}$ and compare it with the obtained long-term radius. 
We choose $r_0 = 1/2$,  $L = 4$, and $T=5$. The long-term radius is roughly equal to  $0.59$, as confirmed in \cref{fig:2D2PLongTerm}.  

\begin{figure}[t]
    \centering
    \caption{Two-phase, supercooled Stefan problem without surface tension ($d=2$) discussed in \cref{sec:longterm}.  The radius indeed converges to the theoretical long-term value $r_{\infty} \approx 0.59$. }
    \includegraphics[height = 2in,width = 3in]{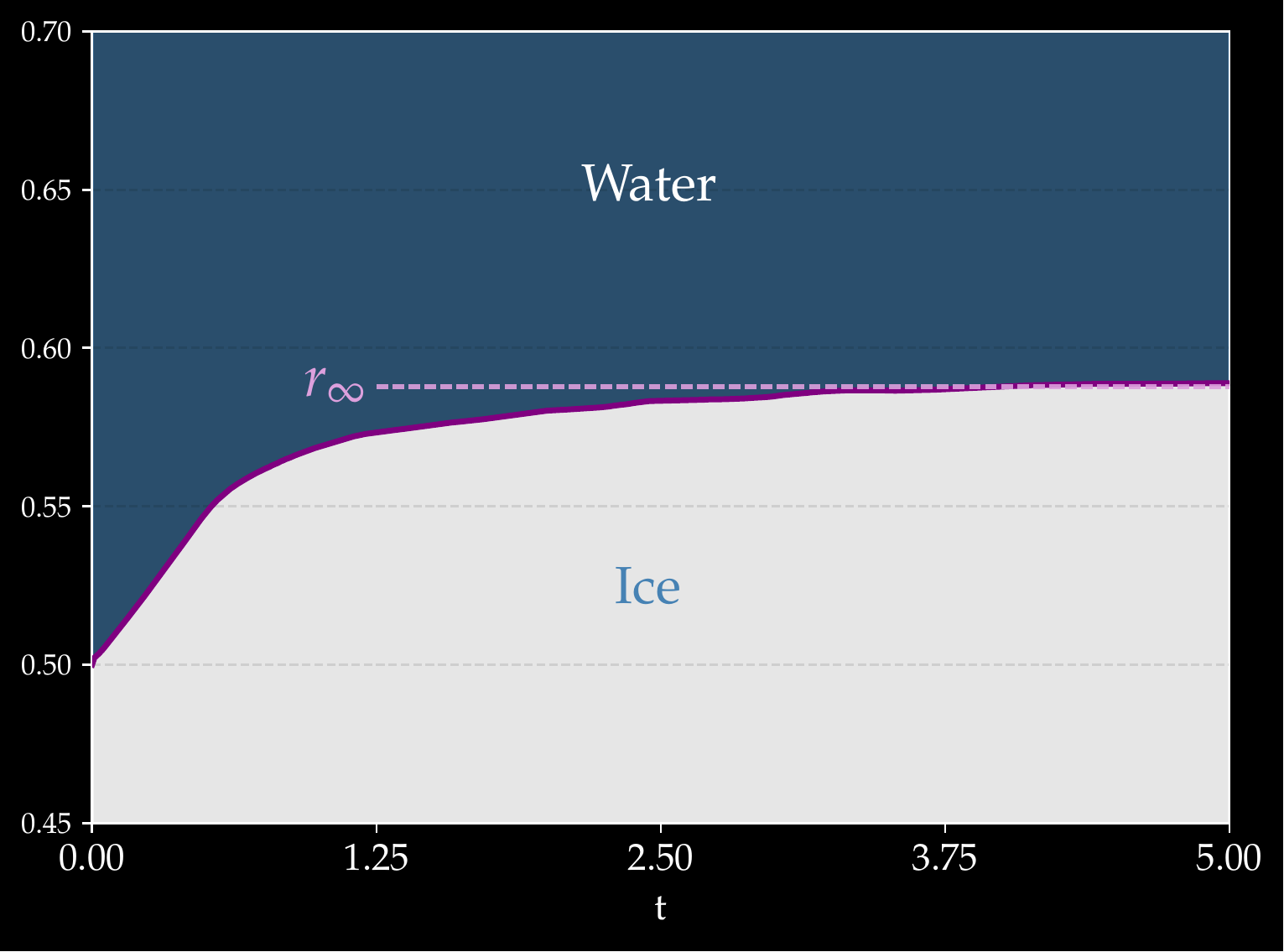}
    \label{fig:2D2PLongTerm}
\end{figure}

Let us now estimate the temperature in both phases as a function of time and radius from the trained moving solid. In view of \eqref{eq:UvsProb}, we here have  that  $u^i(t,x)\,\mathrm{d}x = -c_i\Pb(X_t^i \in \mathrm{d}x, \tau^i>t)$ for $t\in [0-,T]$, $i=1,2$. Writing $v^i(t,|x|) :=  u^i(t,x) $ and using polar coordinates yields 
\begin{align*}
    v^i(t,r) &=  - \frac{c_i}{2 \pi r} \underbrace{\frac{\Pb(|X_t^i| \in \mathrm{d}r \ | \ \tau^i > t)}{\mathrm{d}r}}_{(a)} \ \underbrace{\Pb( \tau^i > t)}_{(b)}.   
\end{align*}
The term $(a)$ is the time $t$ conditional density of the norm of surviving particles which can be approximated using Monte Carlo simulation and kernel density estimation (KDE). The second term $(b)$ is  the unconditional survival probability over time and can also be estimated using simulation.
\cref{fig:2D2PLongTermTemp} displays the temperature of the solid and the liquid over time. As can be seen, the temperature at time $t=2$ is already close to its  equilibrium $u^1_{\infty} = u^2_{\infty} = 0$. 

\begin{figure}[t]
    \centering
    \caption{Two-phase, supercooled Stefan problem without surface tension ($d=2$) discussed in \cref{sec:longterm}. Temperature (vertical axis) in the liquid ($u^1$) and the solid ($u^2$)  retrieved using Monte Carlo simulation and kernel density estimation (KDE). At $t=3$, both phases are already close to their equilibrium temperature $u^1_{\infty} = u^2_{\infty} = 0$  and long term term radius $r_{\infty} \approx 0.59$.  }
    \includegraphics[height = 1.4in,width = 5.8in]{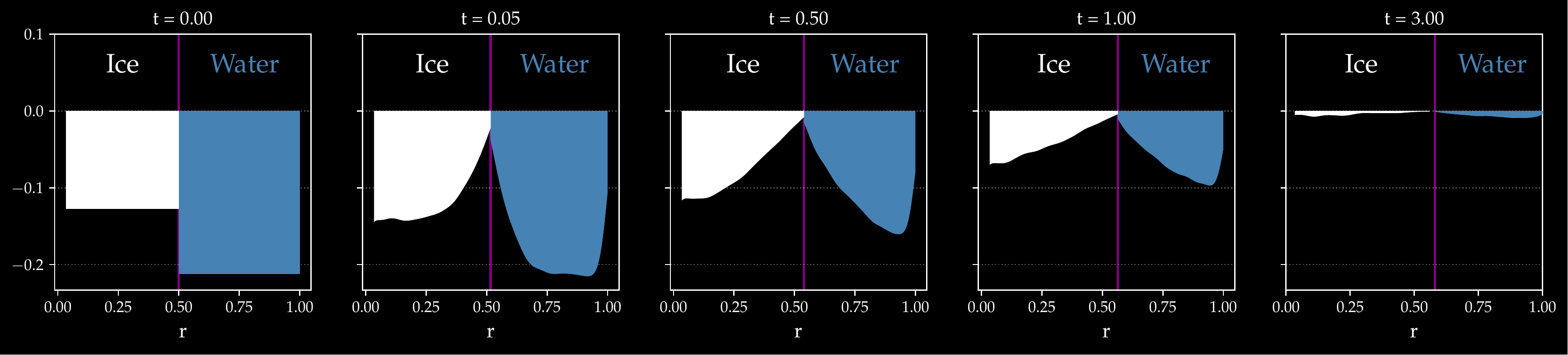}
    \label{fig:2D2PLongTermTemp}
\end{figure}

\subsubsection{Two-phase, with Surface Tension \textnormal{$(d=3)$}}\label{sec:3D2PTension}
Consider the three-dimensional Stefan problem with surface tension. The initial temperatures in the solid and the liquid are respectively given by 
$$u^1_0 = -\frac{\mathds{1}_{B_R\setminus B_{r_0}}}{ 2|B_R\setminus B_{r_0}|}, \quad u^2_0 = - \frac{\mathds{1}_{B_{r_0}}}{|B_{r_0}|}. $$
We therefore have $\eta = -1$ (supercooled liquid),  $c_1 = 1/2$, and $c_2 = 1$.  
Again, the solution remains radially symmetric, i.e., $\Gamma_t = B_{r(t)}$ for some function $r\!:[0,T] \to [0,R]$. The temperature at the interface is therefore $u^1\equiv u^2 \equiv -\frac{\gamma}{r(t)}$. We  use the growth condition in \cref{prop:growthCondTensionPoisson} to train the parameters and estimate the mean curvature via \cref{alg:curv3D}. To benchmark our method, we apply a useful trick in the three-dimensional radial case to get rid of surface tension. A similar argument is given in \cite[Section 4]{HHV}. For $i=1,2,$ define 
\begin{equation}\label{eq:trick}
    \widetilde{u}^i(t,x) = u^i(t,x)+ \frac{\gamma}{|x|}.
\end{equation}
Then $\widetilde{u}^1 \equiv \widetilde{u}^2 \equiv 0$ on $\partial \Gamma$, so 
the Gibbs-Thomson condition \eqref{eq:GibbsThomson} is effectively ``absorbed'' by the transformation in \eqref{eq:trick}. In addition,  $\Delta\left(|x|^{-1}\right)= (3-d)|x|^{-3}$, hence  $x \mapsto |x|^{-1}$ is harmonic if $d=3$ and 
$\widetilde{u}^i$ still solves the heat equation \eqref{eq:2SPHeat}. We can therefore apply the deep level-set  method without surface tension  by changing the initial condition to $\widetilde{u}^i_0(x) =u^i_0(x) + \frac{\gamma}{|x|} $. It is worth noting that the Stefan condition \eqref{eq:2SPStefan} is also affected. As $u^i = \widetilde{u}^i - \frac{\gamma}{|x|} $ and  $\partial_{\nu}(\frac{1}{|x|}) = -\frac{1}{|x|^2}$, we obtain
$$ V =  \frac{\alpha_2}{2L}\partial_\nu u^2 -\frac{\alpha_1}{2L} \partial_\nu u^1 =   \frac{\alpha_2}{2L}\partial_\nu \widetilde{u}^2 -\frac{\alpha_1}{2L} \partial_\nu \widetilde{u}^1  + \gamma \frac{\alpha_2 - \alpha_1}{2L|x|^2}.$$
If  $\alpha_1 = \alpha_2$, then the Stefan condition remains unchanged. Otherwise, when $t\mapsto r(t)$ is strictly monotone, the growth condition in \eqref{eq:probaCond2} becomes 
\begin{align*}
& \int_{\Gamma_{0-}}\psi -  \int_{\Gamma_t  } \psi 
   =  \frac{1}{L} \Big(\E^{\mu^1}[  \psi(X_{\tau^1}^1)\,\mathds{1}_{\{ \tau^1 \le t\}}] - \E^{\mu^2}[  \psi(X_{\tau^2}^2)\,\mathds{1}_{\{ \tau^2 \le t\}}] +  \calK_t \Big) ,\quad \psi \in \mathcal{C}_c^{\infty}(\Omega), \\[0.5em]
&  \calK_t = \gamma \frac{\alpha_1-\alpha_2}{2} \int_0^t\int_{\partial \Gamma_s} \frac{\psi}{|x|^2} = \gamma \frac{\alpha_1-\alpha_2}{2} \bigg(\int_{ \Gamma_t} \frac{\psi}{|x|^2} - \int_{ \Gamma_0} \frac{\psi}{|x|^2} \bigg).
\end{align*}
In this example, we assume that $\alpha_1=\alpha_2 = \frac{1}{2}$, $L=2$, $\gamma = 0.25$. The above remedy when $\alpha_1 \neq \alpha_2$ is therefore not needed.  We note that after the radial trick the liquid is still supercooled, i.e.,~$\widetilde{u}_0^1\le 0$. 

\cref{fig:3DRadialTension} compares the  radius of the solid over time obtained from the growth condition in \cref{prop:growthCondTensionPoisson} (orange curve), using the radial trick (dashed purple curve), and without surface tension (red curve). The algorithm with surface tension and its benchmark indeed give similar results. As expected, the growth of the solid ball is less pronounced with surface tension as the freezing point becomes negative for convex solid regions. 

\begin{figure}[H]
\caption{Two-phase, radially symmetric Stefan problem ($d=3$) with surface tension $(\gamma = 0.25)$ and without. 
Time evolution of the radius of the solid  obtained from the growth condition in \cref{prop:growthCondTensionPoisson} (orange curve), using the radial trick (dashed purple curve), and without surface tension (red curve).}  
\vspace{-2mm}
\begin{center}
    \includegraphics[height = 2in, width = 2.8in]{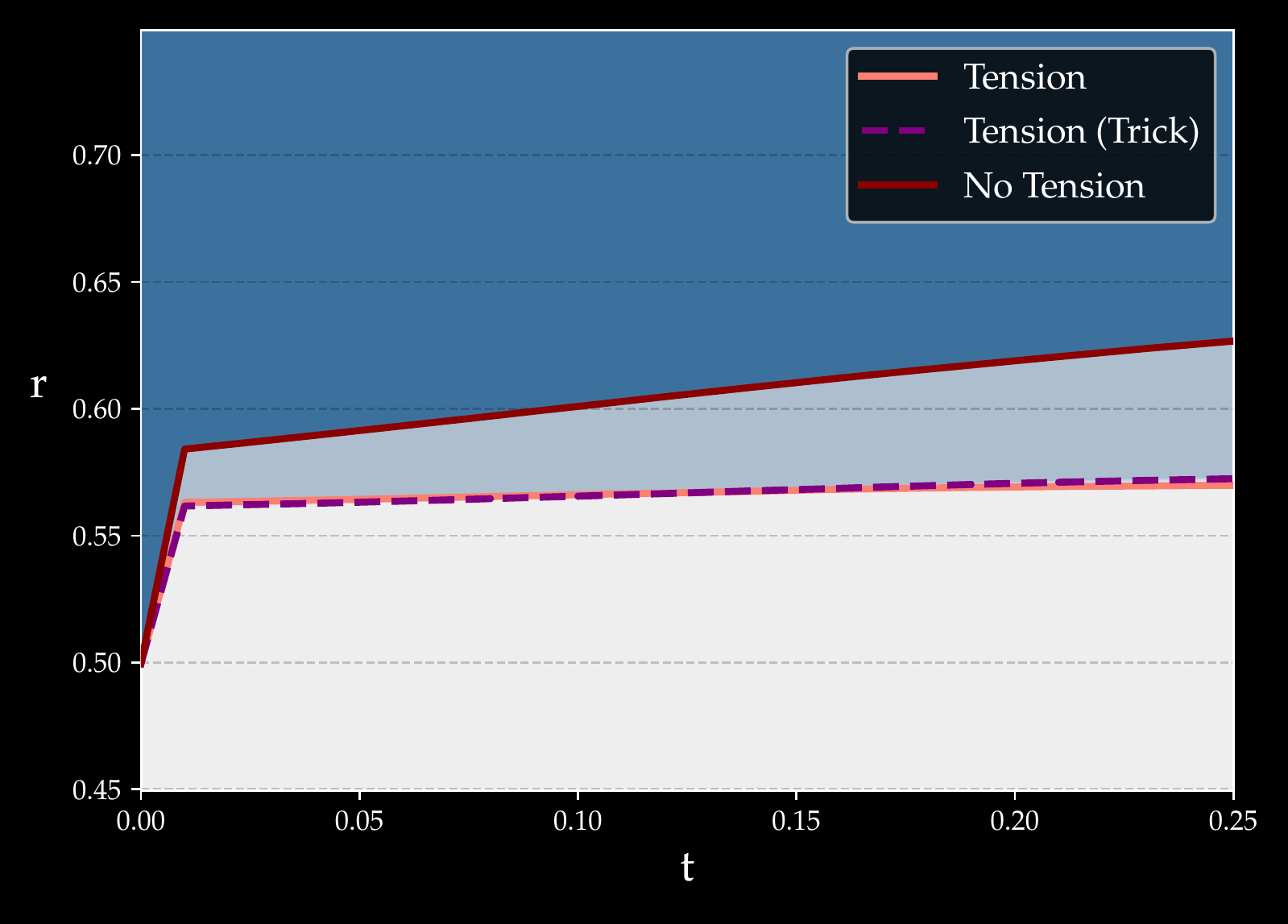}
\end{center}
\label{fig:3DRadialTension}
\end{figure}

\subsubsection{Two-phase,  Jump in the Radius \textnormal{$(d=2)$} } \label{sec:jumpExample}
Consider the two-dimensional radial supercooled Stefan problem inside the ball $\Omega = B_1 \subseteq \R^2$. We set the latent heat to $L=2$ and assume that there is no surface tension at the interface. Given $\Gamma_{0-} = B_{r_0}$, $r_0\in (0,R)$, and assuming the initial temperature to be radial, the solid region remains radial as well, i.e., $\Gamma_t = B_{r(t)}$ for some càdlàg function $[0-,T] \ni t\mapsto r(t)$. The goal of this experiment is to demonstrate the method's ability to generate jumps in the radius $r(\cdot)$. To this end,  consider the following initial condition:
\begin{equation}\label{eq:jumpExInitialTemp}
    u^1_0(x) = -\frac{1}{|A_{r_0,\delta_0 } |}\mathds{1}_{A_{r_0,\delta_0 } }(x), \quad  u^2_0(x) = - \frac{1}{|B_{r_0} |}\mathds{1}_{B_{r_0}}(x), \quad A_{r_0,\delta_0} = B_{r_0+\delta_0}\backslash B_{r_0}, 
\end{equation} 
where $r_0 = \frac{1}{4},\ \delta_0 = \frac{1}{8}$. 
In other words, the liquid is strongly  supercooled inside the annulus $A_{r_0,\delta_0 }$ and at zero temperature elsewhere. 
We may therefore expect  a sizeable liquid region surrounding the solid to freeze immediately, leading to an initial positive jump in the radius. This is indeed the case as observed in \cref{fig:2D2PJump}.   
As seen in \cref{sec:jumpPenalty}, we can in fact quantify the magnitude of the jump for physical solutions.  In this case, we have from  \eqref{eq:eqPosJump} that  $\Delta r(0)$ must be the smallest positive value (if any) such that 
\begin{align*}
  |A_{r_{0},\Delta }| =  \frac{1}{L} \int_{A_{r_{0},\Delta  }} \bigg(\!-\frac{\gamma}{|x|} - u^1_{0}(x)\!\bigg). 
\end{align*}
Absent of surface tension ($\gamma =0$) and in light of the initial data in  \eqref{eq:jumpExInitialTemp}, this gives 
\begin{align}\label{eq:jumpSizeEquation}
  |A_{r_0,\Delta }| =  \frac{1}{L} \int_{A_{r_0,\Delta  }}  |u^1_{0}|  \;\;\Longleftrightarrow \;\; \pi\big((r_0 + \Delta)^2- r_0^2\big) =  \frac{(r_0 + \Delta \wedge \delta_0)^2- r_0^2}{L ((r_0 + \delta_0)^2- r_0^2)}. 
\end{align}
Solving numerically, we obtain $\Delta r(0)  \approx 0.22$. 
See also \cref{fig:2D2PJumpEq} which compares the area $|A_{r_0,\Delta }|$  with the excess temperature $\frac{1}{L} \int_{A_{r_0,\Delta  }}  |u^1_{0}|$ as functions of $\Delta$. We finally observe  in \cref{fig:2D2PJump} that the initial jump produced by the deep level-set method nearly matches the physical jump size. 
\begin{figure}[H]
    \centering
    \caption{Magnitude of the jump in the radius obtained by equating the excess temperature (blue curve) to the volume absorbed (red curve), corresponding to the left-hand and the right-hand sides of \eqref{eq:jumpSizeEquation}, respectively. As can be seen, the initial jump size is roughly equal to $\Delta r(0) \approx 0.22$ (precisely $0.2208$). }
    \includegraphics[height = 2in,width = 2.8in]{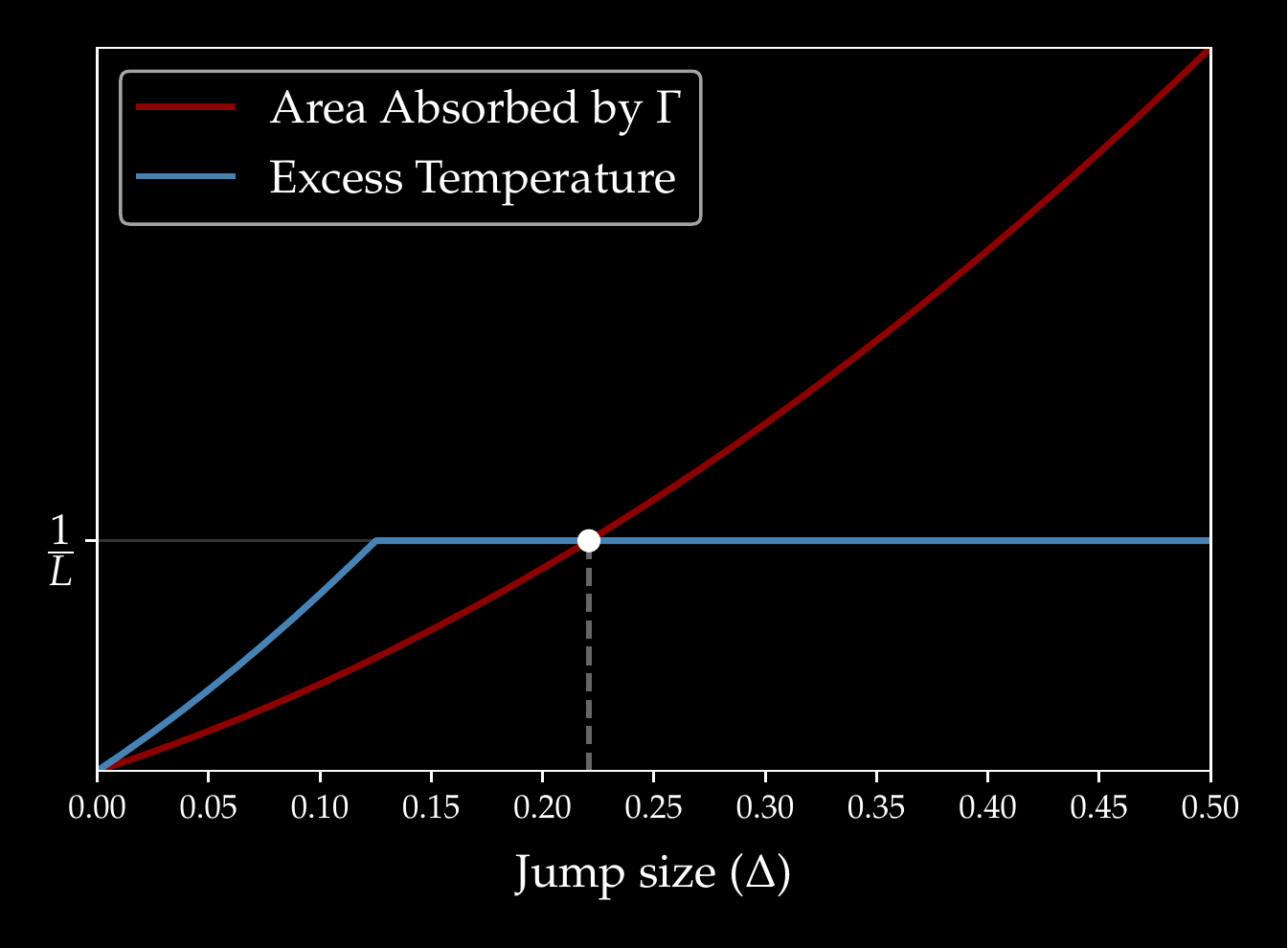}
    \label{fig:2D2PJumpEq}
\end{figure}

\begin{figure}[H]
    \centering
    \caption{Two-dimensional, two-phase supercooled Stefan problem without surface tension. The initial jump has size $\approx 0.21$ (precisely $0.2098$) and is close to the physical jump $\Delta r(0) \approx 0.22$. }
    \includegraphics[height = 2.0in,width = 2.8in]{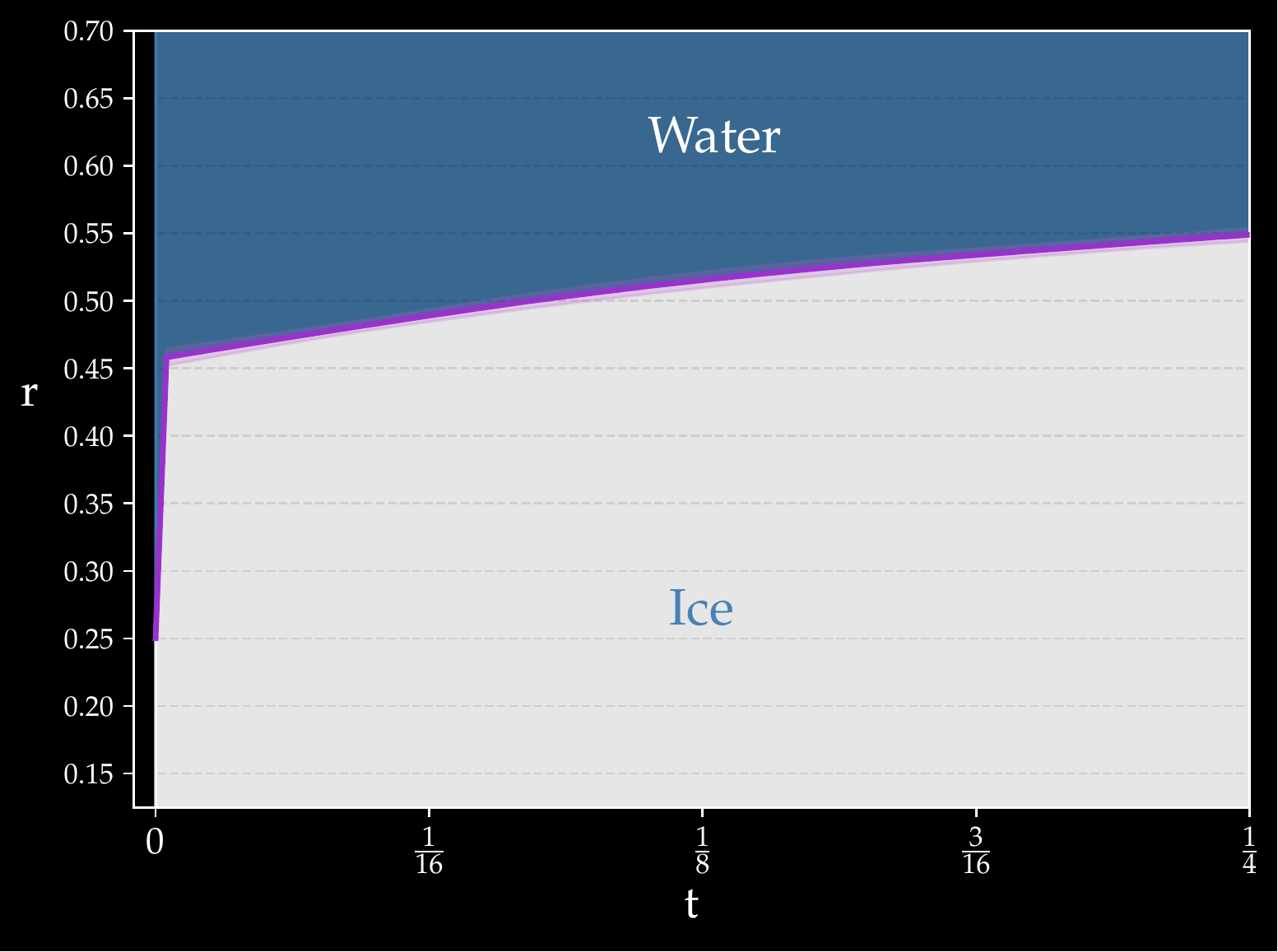}
    \label{fig:2D2PJump}
\end{figure}

\subsection{General Case}\label{sec:GeneralNumerics}

\subsubsection{Two-phase, Square-shaped Solid \textnormal{($d=2$)}}\label{sec:Square}
Let $\Omega = B_1 \subseteq \R^2$ and suppose that $\Gamma_{0-}$ is the two-dimensional  $\ell_1$-ball of radius $r_0 \in (0,1)$. For the initial level-set function, we consider $\Phi_0(x) = |x|_1 - r_0$, $|x|_1 = \sum_{i=1}^2|x_i|$. The temperature is initially uniform in both the liquid and the solid, namely  
$$u^1_0 = -c_1\frac{\mathds{1}_{\Omega \setminus \Gamma_{0-}}}{|\Omega \setminus \Gamma_{0-}|}, \quad u^2_0 = -c_2\frac{\mathds{1}_{\Gamma_{0-}}}{ |\Gamma_{0-}|}.$$

We choose 
$ \ \alpha_1 = 1/2,  \ \alpha_2 = 1/20 $, $L= 1/100$, and $\gamma = 0$.
This example intends to investigate the situation where the liquid particles have a much larger diffusivity than the solid particles. 
As the liquid is not supercooled, we expect the solid region to melt faster than with similar diffusivity.

As can be seen in  \cref{fig:2PhaseSquareNoTension}, the solid region becomes rounder as the liquid particles are more likely to hit the corners of the initial square-shaped region. Moreover, the  particles in the solid typically take longer to hit the interface as their diffusivity is far less than the one of the liquid particles.

\begin{figure}[H]
     \centering
    \caption{Two-phase Stefan problem with square-shaped initial solid region  ($d=2$). Evolution of the solid over time (no surface tension).}
    \includegraphics[height = 1.3in,width = 5.8in]{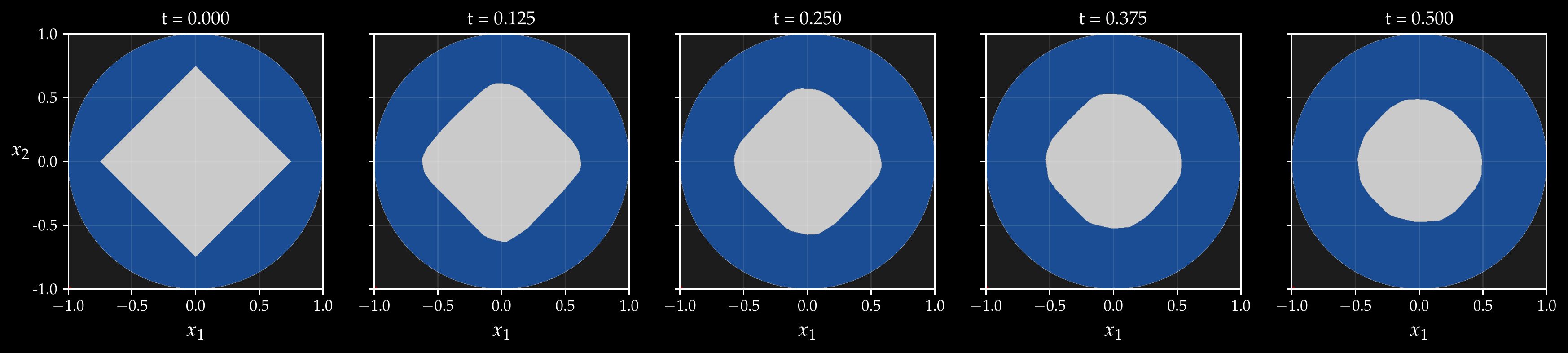}
    \label{fig:2PhaseSquareNoTension}
\end{figure}

\vspace{-5mm}

\begin{remark}
Recall that the neural network learns the difference between the level-set function $\Phi(t,\cdot)$, $t\in [0,T]$ and its initial value $\Phi_0(\cdot)$. However, $\Phi_0$ is not smooth in the following examples, so the lack of regularity  will carry over to $\Phi(t,\cdot)$ for $t\in(0,T]$. To give more flexibility to the deep level-set function, one could consider 
 $$  \Phi(t,x;\theta) := G(t,x;\theta) + \Phi_0(x)(1-\widetilde{G}(t;\widetilde{\theta}) )_+  , \quad t\in [0,T], $$
 where $\widetilde{G}\!:[0,T]\times\widetilde{\Theta} \to \R$ is another feedforward neural network taking only time as an input and capturing the decay of the initial level-set function. However, this additional flexibility did not prove helpful in our numerical experiments. One possible explanation is that the jump penalty prevents the auxiliary neural network from taking significant non-zero values.  
\end{remark}

\subsubsection{Two-phase, Diamond-shaped Solid \textnormal{($d=2$)}, Melting Regime}\label{sec:DiamondMelt}

 We are given a diamond-shaped solid $\Gamma_{0-}$  defined as the zero sublevel set of 
\begin{equation}\label{eq:levelSetDiam}
\Phi_0(x) = \bigg(\sum_{i=1}^2|x_i|^{1/2}\bigg)^2 - r_0. 
 \end{equation}
  See the leftmost panel of \cref{fig:DiamondMeltNoTension} for an illustration. We choose $r_0 = 3/4$ and the temperatures
$$u^1_0 = \frac{\mathds{1}_{\Omega \setminus \Gamma_{0-}}}{|\Omega \setminus \Gamma_{0-}|}, \qquad u^2_0 = -\frac{\mathds{1}_{\Gamma_{0-}}}{ 4|\Gamma_{0-}|}.$$
 We also set  $\eta = 1$, $L=1/4$, and $\gamma = 0.15$.   \cref{fig:DiamondMeltNoTension} displays the melting of the solid. As can be seen, the spikes of the diamond get rounder and it eventually becomes almost radially symmetric. This is because the liquid particles hit the interface more frequently near the spikes, having more contact points, which drives the melting process there.  

\begin{figure}[H]
\caption{Two-phase Stefan problem with surface tension $(\gamma = 0.15)$.  Melting of a diamond-shaped solid region  ($d=2$).}
     \centering
   \includegraphics[height = 1.3in,width = 5.8in]{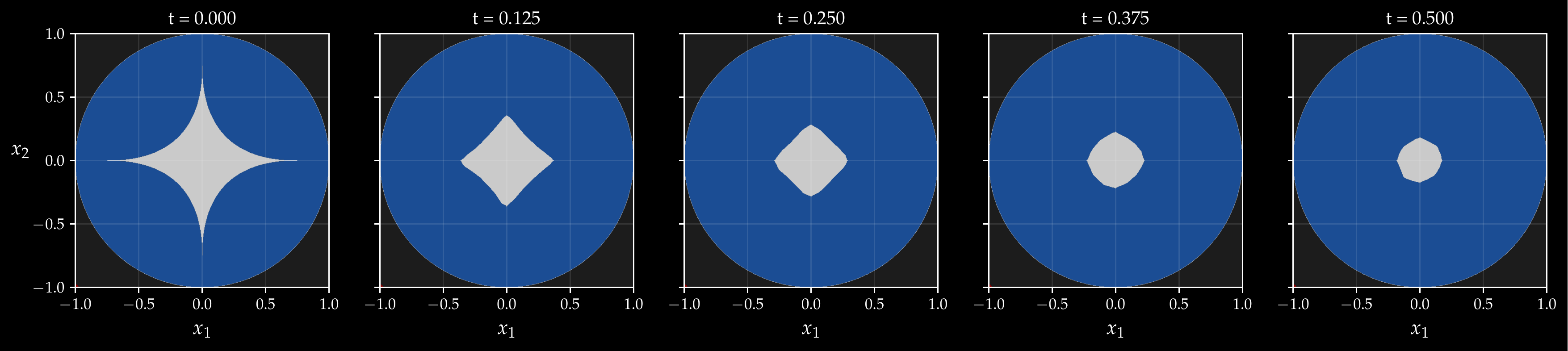}
    \label{fig:DiamondMeltNoTension}
\end{figure}

\begin{figure}[h]
\caption{Two-phase Stefan problem with diamond-shaped initial solid region  ($d=2$) and supercooled liquid. Evolution of the solid  region over time.}
\begin{subfigure}[b]{1\textwidth}
     \centering
    \caption{Without surface tension. }
    \includegraphics[height = 1.3in,width = 5.8in]{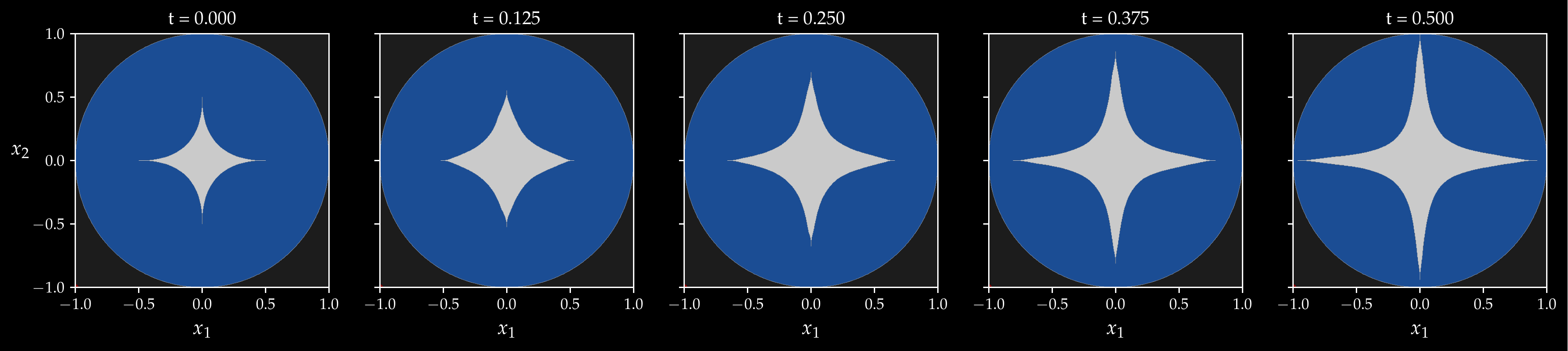}
    \label{fig:DiamondNoTension}
\end{subfigure}

\begin{subfigure}[b]{1\textwidth}
     \centering
    \caption{With surface tension $(\gamma = 0.15)$. }
    \includegraphics[height = 1.3in,width = 5.8in]{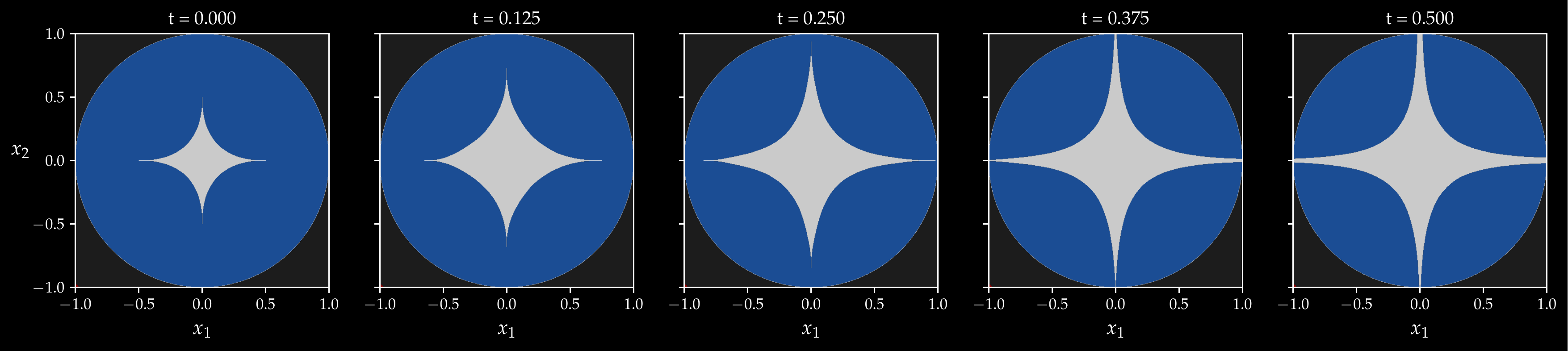}
    \label{fig:DiamondTension}
\end{subfigure}
\end{figure}
\subsubsection{Two-phase, Diamond-shaped Solid \textnormal{($d=2$)}, Freezing Regime}\label{sec:DiamondFreeze}

Consider as in \cref{sec:DiamondMelt} a diamond-shaped solid $\Gamma_{0-}$  with level-set function as in \eqref{eq:levelSetDiam}. The initial temperature is again uniform  but the liquid is supercooled this time, namely  
$$u^1_0 = -c_1\frac{\mathds{1}_{\Omega \setminus \Gamma_{0-}}}{|\Omega \setminus \Gamma_{0-}|}, \qquad u^2_0 = -c_2\frac{\mathds{1}_{\Gamma_{0-}}}{ |\Gamma_{0-}|}.$$
We choose $r_0 = 1/2$, $c_1 = 1$, $c_2=1/10$. The purpose of this example is to see the impact of surface tension on the growth of the solid. 
Recall from \cref{rem:curvconjecture} that for two-dimensional problems, only the  sign of the  curvature at the interface needs to be estimated (using \cref{alg:curv2D}). 

\cref{fig:DiamondNoTension,fig:DiamondTension} show the evolution of the solid region for the surface tension coefficients $\gamma = 0$ (no tension) and $\gamma = 0.15$, respectively. Observe that the interface is strictly concave almost everywhere (except at the spikes). In light of the Gibbs-Thomson condition \eqref{eq:GibbsThomson}, the temperature at the boundary is \textit{above} zero almost everywhere when surface tension is added, which accelerates the growth of the solid region. This is confirmed in \cref{fig:DiamondTension}. 

\subsubsection{Two-phase, Dumbbell-shaped Solid \textnormal{$(d=2)$}} \label{sec:Dumbbell}
We finally investigate the melting of a dumbbell-shaped  solid  as in the top-left panel of \cref{fig:dumbellTens2D2P}. The following parameters are used: 
$$\gamma = 0.1, \quad c_1 = 2, \quad c_2 = 0.25, \quad \eta = 1. $$ 
 Again, the initial temperature is constant in both the liquid and the solid. 
As in the previous section, we use \cref{alg:curv2D} to determine the sign of the curvature along the boundary.  \cref{fig:dumbellTens2D2P} describes the short term evolution of the solid region. As can be seen, the solid disconnects in the time interval $[0.06,0.07]$. We also note that the middle of the ``bar'' melts faster than its extremities, namely  the concave corners. 
Indeed,  concave areas  have a melting point \textit{above} zero, which slows down the melting process. 

\begin{figure}[H]
\caption{Two-phase Stefan problem with surface tension and dumbbell-shaped solid region  ($d=2$). The solid  disconnects between $t=0.06$ and $t=0.07$. }
\begin{subfigure}[b]{1\textwidth}
     \centering
    \includegraphics[height = 1.4in,width = 5.8in]{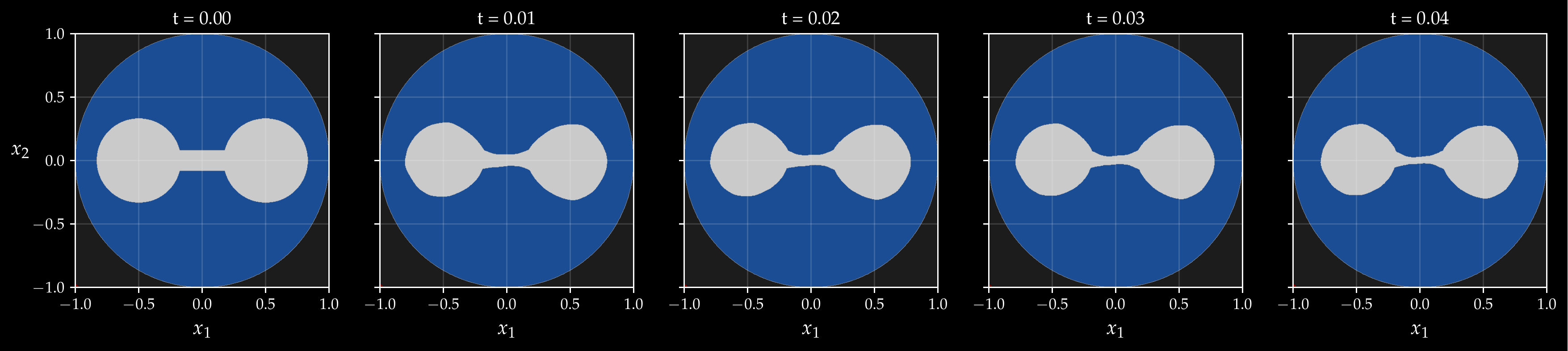}
\end{subfigure}

\begin{subfigure}[b]{1\textwidth}
     \centering
    \includegraphics[height = 1.4in,width = 5.8in]{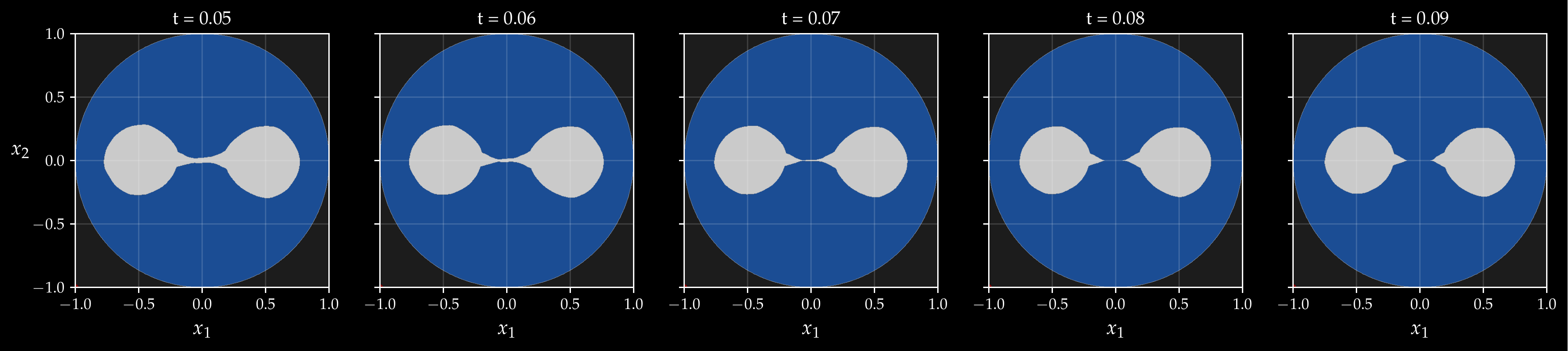}
\end{subfigure}
\label{fig:dumbellTens2D2P}
\end{figure}



\section{Conclusion} \label{sec:conclusion}
We combine the level-set method  with deep learning to represent the solid region in two-phase Stefan problems. The growth condition of probabilistic solutions is turned into a loss function and estimated using Monte Carlo simulation. The parameters of the deep level-set function are then trained using a relaxed formulation.  When adding surface tension, the algorithm involves the simulation of particles close to the solid-liquid interface and the computation of the mean curvature using a  dilation technique. The latter only requires the outward normal vector to the boundary, which is  readily available through automatic differentiation of the deep level-set function. 

The numerical examples illustrate the validity and flexibility of the method. In the two-dimensional radial case, the algorithm is capable of accurately capturing known melting rates, 
long-term radius, 
and initial jumps in the radius.
Further, the three-dimensional radial example of \cref{sec:3D2PTension} accurately demonstrates the effect of surface tension. 
We subsequently apply the method to general examples in two dimensions, where the initial solid is square-, 
diamond-,   
or dumbbell-shaped. 
The effect of surface tension is also discussed. 

We believe that our findings make a first step towards the computation and understanding of probabilistic solutions for general Stefan problems. Naturally, other examples can be explored. For instance, it would be interesting to see if the method can handle more complex dynamics with multiple jumps (e.g., Figure $1$ in \cite{MishaRadialTension})  
or capture dendritic solidification \cite{Almgren,Gibou,OsherFedkiw}. 
Another direction of future research is to investigate non-radial three-dimensional examples, such as  cubic or dumbbell-shaped solid regions. 

More broadly, the deep level-set method can be applied to other free boundary problems as well, e.g., the Hele-Shaw and the Stokes flow \cite{Howison}.  In mathematical finance, the method can be adapted to describe the stopping region of American options, thus extending the ``neural optimal stopping boundary method" \cite{ReppenSonerTissotFB} when the geometric structure of the exercise boundary is unknown. Another application would consist in computing the no-trade zone in portfolio choice problems with transaction costs \cite{MKRS}. 
\appendix
\section{Proofs}\label{app:Proofs}

\subsection{\cref{prop:GrowthCondNoTension}}
\label{app:GrowthCondNoTension}

\begin{proof} 
Let $(u^1,u^2,\Gamma)$ be  a classical solution of \eqref{eq:2SPHeat} $\!-$\eqref{eq:2SPDirichlet}. Without loss of generality, we prove the claim for $\eta =1$.   
First, the Stefan condition and the divergence theorem give 
\begin{align*}
   L \  \frac{\mathrm{d}}{\mathrm{d}t}  \int_{\Gamma_t } \psi &= L \int_{\partial \Gamma_t} V \psi  \\
   &= \frac{\alpha_2}{2} \int_{\partial \Gamma_t} \partial_{\nu}u^2\ \psi - \frac{\alpha_1}{2} \int_{\partial \Gamma_t} \partial_{\nu}u^1\ \psi \\
   &= \sum_{i=1}^2 \frac{\alpha_i}{2} \int_{\Gamma_t^i}\text{div}(\nabla u^i\  \psi) \\
   &= \sum_{i=1}^2  \frac{\alpha_i}{2} \int_{\Gamma_t^i}(\Delta u^i \ \psi +   \nabla u^i \cdot \nabla \psi).
\end{align*}
Observing that the velocity of $\Gamma^1_t$ is $-V$, we obtain
\begin{align*}
    \int_{\Gamma_t^i} \frac{\alpha_i}{2}\Delta u^i  \psi = \int_{\Gamma_t^i} \partial_t u^i\  \psi = \frac{\mathrm{d}}{\mathrm{d}t} \int_{\Gamma_t^i} u^i  \psi - (-1)^{i} \int_{\partial \Gamma_t^i} V u^i  \psi. 
\end{align*}
In light of the Dirichlet boundary condition \eqref{eq:2SPDirichlet}, 
the last (surface) integral vanishes. 
Moreover, integration by parts gives 
\begin{align*}
    \frac{\alpha_i}{2} \int_{\Gamma_t^i} \nabla u^i \cdot \nabla \psi =  (-1)^{i} \frac{\alpha_i}{2} \int_{\partial \Gamma_t^i} u^i  \partial_{\nu}\psi -  \frac{\alpha_i}{2}\int_{\Gamma_t^i} u^i   \Delta \psi = -  \frac{\alpha_i}{2}\int_{\Gamma_t^i} u^i   \Delta \psi, 
\end{align*}
invoking again the Dirichlet boundary condition. Thus, 
\begin{align}  \label{eq:intermediateGrowthCond}
   L \  \frac{\mathrm{d}}{\mathrm{d}t}  \int_{\Gamma_t } \psi =  \sum_{i=1}^2 \bigg(\frac{\mathrm{d}}{\mathrm{d}t} \int_{\Gamma_t^i} u^i  \psi -  \frac{\alpha_i}{2}\int_{\Gamma_t^i} u^i   \Delta \psi \bigg).  
\end{align}
For $t\ge 0$, define
$\overleftarrow{\; \tau}^i_{t} = \inf \{s \ge 0 \! : X_0^i+\sqrt{\alpha_i}\ W^i_s + l^i_s \notin \Gamma^{i}_{t-s} \}$.  
Note that, by a Feynman-Kac formula, 
\begin{align*}
    u^i(t,x) =  \E^{\Pb_x}[  u_0^i(X_{t}^i)\,\mathds{1}_{\{\overleftarrow{\; \tau}^i_{t} > t\}}], \quad x \in \Gamma_t^i, 
\end{align*}
where the subscript in  $\Pb_x$ indicates that  $ X_0^i = x$. 
Using the time reversibility of Brownian motion, It\^o's formula and the definition of  $\tau^i$ in \eqref{eq:rBM}, 
we obtain 
\begin{align*}
 (-1)^{i+1}\int_{\Gamma_t^i} u^i  \psi &= \int_{\Gamma_t^i} \E^{\Pb_x}[  |u_0^i(X_{t}^i)|\,\mathds{1}_{\{\overleftarrow{\; \tau}^i_{t} > t\}}]\  \psi(x)\, \mathrm{d}x \\
 &= \int_{\Gamma_t^i} \int_{\Gamma_{0}^i}   |u_0^i(y)| \Pb_x (X_{t}^i \in \mathrm{d}y, \overleftarrow{\; \tau}^i_{t} > t) \  \psi(x)\,\mathrm{d}x \\
 &= \int_{\Gamma_{0}^i} \int_{\Gamma_t^i}     \psi(x)  \Pb_y (X_{t}^i \in \mathrm{d}x, \tau^i > t) \ |u_0^i(y)|\,\mathrm{d}y \  \\[0.8em]
  &= \E^{\mu^i}[  \psi(X_{t}^i)\,\mathds{1}_{\{ \tau^i > t\}}] \hspace{5cm} 
  \\[0.8em] 
   &=
   \E^{\mu^i}[  \psi(X_{\tau^i \wedge t}^i)]  -   \E^{\mu^i}[  \psi(X_{\tau^i}^i)\,\mathds{1}_{\{ \tau^i \le t\}}] \\[1em]
   &= \E^{\mu^i}[\psi(X^i_0)] +  \frac{\alpha_i}{2} \int_0^{ t} \underbrace{\E^{\mu^i}[    \Delta  \psi(X_{s}^i)\, \mathds{1}_{\{\tau^i > s\}}] }_{ =  (-1)^{i+1} \int_{\Gamma_s^i} u^i  \Delta \psi}  \,\mathrm{d}s -   \E^{\mu^i}[  \psi(X_{\tau^i}^i)\,\mathds{1}_{\{ \tau^i \le t\}}].
\end{align*}
Hence $\frac{\mathrm{d}}{\mathrm{d}t} \int_{\Gamma_t^i} u^i  \psi -  \frac{\alpha_i}{2}\int_{\Gamma_t^i} u^i   \Delta \psi = (-1)^{i}\frac{\mathrm{d}}{\mathrm{d}t}\E^{\mu^i}[  \psi(X_{\tau^i}^i)\,\mathds{1}_{\{ \tau^i \le t\}}]$ and the result follows.

\end{proof}


\subsection{\cref{prop:GrowthCondTension}}
\label{app:GrowthCondTension}

\begin{proof}
It is enough to prove the claim for $\eta=1$. Looking at the proof of \cref{prop:GrowthCondNoTension} and using that $u^1,u^2$ coincide on $\partial \Gamma$, we gather  that
\begin{align*}
    L \ \frac{\mathrm{d}}{\mathrm{d}t} \int_{\Gamma_t } \psi
    &= \sum_{i=1}^2 \bigg( \frac{\mathrm{d}}{\mathrm{d}t} \int_{\Gamma_t^i} u^i  \psi - (-1)^i \int_{\partial \Gamma_t} V u^i  \psi +  \frac{\alpha_i}{2} \int_{\Gamma_t^i} \nabla u^i \cdot \nabla \psi \bigg) \\ 
   &= \sum_{i=1}^2 \bigg( \frac{\mathrm{d}}{\mathrm{d}t} \int_{\Gamma_t^i} u^i  \psi -  \frac{\alpha_i}{2}\int_{\Gamma_t^i} u^i   \Delta \psi \bigg) -   \dot{\calK}_t^{I},
\end{align*}
where $ \dot{\calK}_{t}^{I} \ =  \gamma\frac{\alpha_2-\alpha_1}{2} \int_{\partial \Gamma_t} \kappa_{\partial \Gamma_t} \  \partial_{\nu} \psi$.
Moreover, $u^i$ admits the Feynman-Kac representation 
\begin{align*}
    u^i(t,x) =   \E^{\Pb_x}[  u_0^i(X_{t}^i)\,\mathds{1}_{\{\overleftarrow{\; \tau}^i_{t} > t\}}]  - \gamma  K^i(t,x), \quad x \in \Gamma_t^i, 
\end{align*}
with $K^i$ and $\overleftarrow{\; \tau}^i_{t}$  given in the statement. This implies that
\begin{align*}
   L \  \frac{\mathrm{d}}{\mathrm{d}t} \int_{\Gamma_t } \psi  = \sum_{i=1}^2 
(-1)^{i}\frac{\mathrm{d}}{\mathrm{d}t}\E^{\mu^i}[  \psi(X_{\tau^i}^i)\,\mathds{1}_{\{ \tau^i \le t\}}]  \ - \dot{\calK}_{t}^{I} -  \dot{\calK}_{t}^{II} +  \dot{\calK}_{t}^{III},
\end{align*}
with  the additional terms  
\begin{align}\label{eq:extraterm}
     \dot{\calK}_{t}^{II} \ =  \gamma\sum_{i=1}^2 \frac{\mathrm{d}}{\mathrm{d}t} \int_{\Gamma_t^i}  K^i \psi, \quad 
     \dot{\calK}_{t}^{III} \ =   \gamma\sum_{i=1}^2\frac{\alpha_i}{2}\int_{\Gamma_t^i} K^i \Delta \psi.
\end{align}
Next, due to \eqref{eq:curvK}, $K^i$ is  a weak solution of the heat equation in any rectangle $\calR \subseteq \text{int} \ \Gamma^i$, i.e., 
$$\int_{\calR}\Big(\partial_t + \frac{\alpha_i}{2}\Delta\Big)\psi \ K^i = 0, \quad \psi \in \calC_c^{\infty}(\calR). $$
From Weyl's Lemma, we conclude that $K^i$ is in fact a strong solution of  $\partial_t K^i = \frac{\alpha_i}{2} \Delta K^i$ in $\text{int} \ \Gamma^i$. 
Noting also that $K^1 = K^2$ on $\partial \Gamma$, we have
 \begin{align*}
       \frac{1}{\gamma}\dot{\calK}_t^{II} \ = \sum_{i=1}^2 \frac{\mathrm{d}}{\mathrm{d}t} \int_{\Gamma_t^i}  K^i \psi
     = \sum_{i=1}^2 \bigg( \int_{\Gamma_t^i}  \partial_t K^i \ \psi + (-1)^{i}\int_{\partial \Gamma_t}  V K^i \psi  \bigg) =  \sum_{i=1}^2  \int_{\Gamma_t^i}  \frac{\alpha_i}{2}\Delta K^i \ \psi.
 \end{align*}
 Using Green's second identity, note that (again, the outward normal of $\Gamma_t^1$ is  $-\nu$)
  \begin{align*}
    \int_{\Gamma_t^i}  \Delta K^i \ \psi =   (-1)^{i}\int_{\partial \Gamma_t}  (\partial_{\nu} K^i \ \psi -  K^i \ \partial_{\nu}\psi)  + \int_{ \Gamma_t^i}   K^i  \Delta \psi. 
 \end{align*}
Given the definition of $\dot{\calK}_t^{III}$ in \eqref{eq:extraterm} and recalling that  $K^1|_{\partial\Gamma} = K^2|_{\partial\Gamma} = \kappa_{\partial \Gamma}$, this  implies that 
   \begin{align*}
         \dot{\calK}_t^{II} -\dot{\calK}_t^{III} &= \gamma \sum_{i=1}^2  (-1)^{i}\frac{\alpha_i}{2}\int_{\partial \Gamma_t}  (\partial_{\nu} K^i \ \psi -  \kappa_{\partial \Gamma_t} \ \partial_{\nu}\psi) \\ &= -\dot{\calK}_t^{I} + \gamma\int_{\partial \Gamma_t} \bigg(\frac{\alpha_2}{2}\partial_{\nu} K^2 -  \frac{\alpha_1}{2}\partial_{\nu} K^1 \bigg) \psi\\
      &= -\dot{\calK}_t^{I} + \frac{\mathrm{d}}{\mathrm{d}t}\calK_t.
 \end{align*}
Integrating in time and rearranging yields the claim. 
\end{proof}


\subsection{\cref{prop:growthCondTensionPoisson}}
\label{app:growthCondTensionPoisson}

\begin{proof}  Assume again that $\eta=1$. Using similar arguments to the ones in the proof of \cref{prop:GrowthCondNoTension}, we can show that 
\begin{align}\label{eq:growthTensionproof1}
    L\bigg(\int_{\Gamma_{0-}}\psi -  \int_{\Gamma_t  } \psi \bigg) &=  \sum_{i=1}^2 \bigg( \int_{\Gamma_{0-}^i} u^i \psi - \int_{\Gamma_t^i} u^i  \psi +  \frac{\alpha_i}{2}\int_0^t\int_{\Gamma_s^i} u^i   \Delta \psi \bigg).
\end{align}
Expressing $u := -u^1\,\mathds{1}_{\Gamma_t^1} - u^2\,\mathds{1}_{\Gamma_t^2}$ via \cite[Lemma 2.4]{MishaRadialTension}, we find for $\varphi \in \{\psi,\Delta \psi\}$,  
\begin{align}
  (-1)^{i+1} \int_{\Gamma_t^i} u^i \varphi &= \E[\varphi(X_t^i)\, \mathds{1}_{\{\tau^{i}>t\}}] + (-1)^i\ \lim_{\delta \downarrow 0} \  \E\bigg[\sum_{l=1}^{  \infty} \varphi(Y_{t}^{l,i})\,\mathds{1}_{\{T_l^{\delta,i}\le t< \tau^{\delta,i}_l\}} \bigg] \label{eq:intu^i}.
\end{align}
Next,  let us temporarily write  $(Y,T_l,\tau_l,\alpha) = (Y^{l,i}, T_l^{\delta,i}, \tau_{l}^{\delta,i},\alpha_i)$ to simplify notation. Using Itô's lemma and Fubini's theorem, observe that 
\begin{align*}
   \psi(Y_{t})\,\mathds{1}_{\{ T_l\le t< \tau_l\}}  &=  \psi(Y_{\tau_l \wedge t})\,\mathds{1}_{\{ T_l\le t\}}  
   -  \psi(Y_{\tau_l})\,\mathds{1}_{\{\tau_l \le t\}} \\[1em] 
    &= \psi(Y_{T_l})\,\mathds{1}_{\{ T_l\le t\}} + \frac{\alpha}{2}\int_{0}^{t} \Delta \psi(Y_{s})\,\mathds{1}_{\{T_l \le s<\tau_l \}}\,\mathrm{d}s \\
    &\quad + \int_{T_l\wedge t}^{\tau_l\wedge t} \nabla \psi(Y_{s}) \cdot \mathrm{d}Y_s - \psi(Y_{\tau_l})\,\mathds{1}_{\{\tau_l \le t\}} \\[1em] 
    \Longrightarrow \;\; \E[\psi(Y_{t})\,\mathds{1}_{\{ T_l\le t< \tau_l\}}] &= \E[ \psi(Y_{T_l})\,\mathds{1}_{\{ T_l\le t\}} - \psi(Y_{\tau_l})\,\mathds{1}_{\{\tau_l \le t\}}] + \frac{\alpha}{2}\int_{0}^{t} \E[\Delta \psi(Y_{s})\,\mathds{1}_{\{ T_l \le s < \tau_l \}}]\,\mathrm{d}s. 
\end{align*}
Similarly, for $X_t^i$ and $\tau^i$, we have 
\begin{align*}
   \E[\psi(X_{t}^i)\,\mathds{1}_{\{\tau^i > t\}}]  &= \E[\psi(X_{0}^i)-\psi(X_{\tau^i}^i)\,\mathds{1}_{\{\tau^i \le t\}}]\ + \frac{\alpha_i}{2}\int_{0}^{t} \E[\Delta \psi(X_{s}^i)\, \mathds{1}_{\{\tau^i > s  \}}]\,\mathrm{d}s .
\end{align*}
 Plugging the above expressions into \eqref{eq:intu^i}, with $\varphi = \psi$, and invoking Fubini's theorem, we obtain 
\begin{align*}
    (-1)^{i+1} \bigg(\int_{\Gamma_{0-}^i} u^i \psi - \int_{\Gamma_t^i} u^i  \psi \bigg) &=  -\frac{\alpha_i}{2}\int_{0}^{t} \E[ \Delta \psi(X_{s}^i)\, \mathds{1}_{\{\tau^i > s  \}}]\,\mathrm{d}s +  \E[\psi(X_{\tau^i}^i) \, \mathds{1}_{\{\tau^i \le t\}}]  \\[1em]
     &\quad - (-1)^i\ \lim_{\delta \downarrow 0} \   \E\bigg[\sum_{l = 1}^{\infty } \Big( \psi(Y_{T_l^{\delta,i}}^{l,i})\,\mathds{1}_{\{T_l^{\delta,i}\le t\}}- \psi(Y_{\tau_l^{\delta,i}}^{l,i}) \,\mathds{1}_{\{\tau^{\delta,i}_l \le t\}} \Big) \bigg] \\[1em]
    &\quad - (-1)^i\ \lim_{\delta \downarrow 0} \  \frac{\alpha_i}{2} \int_0^t\E\bigg[\sum_{l = 1}^{\infty } \Delta \psi(Y_{s}^{l,i})\,\mathds{1}_{\{T_l^{\delta,i}\le s < \tau^{\delta,i}_l\}} \bigg]\,\mathrm{d}s\\[1em]
    &= (-1)^i\ \frac{\alpha_i}{2}\int_0^t\int_{\Gamma_s^i} u^i   \Delta \psi \ \,\mathrm{d}s + \E[\psi(X_{\tau^i}^i)\, \mathds{1}_{\{\tau^i \le t\}}] + (-1)^i\calK_t^i, 
\end{align*}
using  \eqref{eq:intu^i}  with $\varphi = \Delta \psi$  for the last equality. In view of   \eqref{eq:growthTensionproof1} and the definition of $\calK$, $(N^{\delta,i})_{i=1}^2$ in the statement, we indeed find that 
\begin{align*}
  \int_{\Gamma_{0-}}\psi -  \int_{\Gamma_t  } \psi  &=  \frac{1}{L}\Big( \E^{\mu^1}[ \psi(X_{\tau^1}^1)\,\mathds{1}_{\{ \tau^1 \le t\}}] - \E^{\mu^2}[  \psi(X_{\tau^2}^2)\,\mathds{1}_{\{ \tau^2 \le t\}}] + \calK_{t}   \Big).
\end{align*}
\end{proof}

\bibliographystyle{abbrvnat}
\bibliography{ref.bib}

\end{document}